\providecommand{\U}[1]{\protect \rule{.1in}{.1in}}
\newtheorem{theorem}{Theorem}
\newtheorem{corollary}{Corollary}
\newtheorem{lemma}{Lemma}
\newtheorem{proposition}{Proposition}
\newenvironment{proof}[1][Proof]{\noindent \textbf{#1.} }{\  \rule{0.5em}{0.5em}}
\definecolor{blue}{rgb}{0,0,1}
\begin{document}

\title{On Strong Stability and Robust Strong Stability of Linear Difference Equations
with Two Delays}
\author{Bin Zhou\thanks{The author is with the Center for Control Theory and Guidance
Technology, Harbin Institute of Technology, Harbin, 150001, China. Email:
\texttt{binzhoulee@163.com, binzhou@hit.edu.cn}.}}
\date{}
\maketitle

\begin{abstract}
This paper provides a necessary and sufficient condition for guaranteeing
exponential stability of the linear difference equation $x(t)=Ax(t-a)+Bx(t-b)$
where $a>0,b>0$ are constants and $A,B$ are $n\times n$ square matrices, in
terms of a linear matrix inequality (LMI) of size $\left(  k+1\right)
n\times \left(  k+1\right)  n$ where $k\geq1$ is some integer. Different from
an existing condition where the coefficients $\left(  A,B\right)  $ appear as
highly nonlinear functions, the proposed LMI condition involves matrices that
are linear functions of $\left(  A,B\right)  .$ Such a property is further
used to deal with the robust stability problem in case of norm bounded
uncertainty and polytopic uncertainty, and the state feedback stabilization
problem. Solutions to these two problems are expressed by LMIs. A time domain
interpretation of the proposed LMI condition in terms of Lyapunov-Krasovskii
functional is given, which helps to reveal the relationships among the
existing methods. Numerical example demonstrates the effectiveness of the
proposed method.

\vspace{0.3cm}

\textbf{Keywords:} Linear difference equations; Exponential stability;
Necessary and sufficient conditions; Linear matrix inequality.

\end{abstract}

\section{Introduction and Literature Review}

Throughout this paper, we use $A\otimes B$ to denote the Kronecker product of
matrices $A$ and $B.$ For a matrix $A,$ the symbols $\left \vert A\right \vert
,\left \Vert A\right \Vert ,A^{\mathrm{T}},A^{\mathrm{H}}$, and $\rho \left(
A\right)  $ denote respectively its determinant, norm, transpose, conjugate
transpose, and spectral radius. For a square matrix $P$, $P>0$ denotes that it
is positive definite.

The linear (continuous-time) difference equation%
\begin{equation}
x(t)=\sum \limits_{i=1}^{N}A_{i}x(t-r_{i}), \label{sys0}%
\end{equation}
where $r_{i}>0$ are constants and $A_{i}$ are square matrices, is frequently
encountered in neutral-type time-delay systems \cite{gu12am,mvznh09siam} and
coupled differential-functional equations \cite{gu10auto,lg10auto}. The
stability of system (\ref{sys0}) is usually the necessary condition for
ensuring the asymptotic stability of the above two types of time-delay
systems, and thus has attracted considerable attentions in the literature
\cite{carvalho96laa,ddmb16tac,fridman02jmaa,gu10auto,hale71book,pepe14auto,rmd18tac}%
.

It is known that (\ref{sys0}) is stable if and only if its spectral abscissa
is less than zero \cite{hale71book}. However, the spectral abscissa of
(\ref{sys0}) is not continuous in delays and the stability might be destroyed
by arbitrarily small changes in the delay \cite{ah80jmaa,hale71book}.
Therefore, the concept of strong stability was introduced by \cite{hale71book}
to handle this hypersensitivity of the stability with respect to delays, which
has been generalized in \cite{mvznh09siam}. To go further, we introduce the
following result from Theorem 6.1 (Chapter 9, p. 286) in \cite{hale71book}.

\begin{lemma}
\label{lm0}System (\ref{sys0}) is strongly stable if and only if%
\begin{equation}
\max_{\theta_{i}\in \left[  0,2\pi \right]  ,i=1,2,\ldots,N}\rho \left(
\sum \limits_{i=1}^{N}A_{i}\mathrm{e}^{\mathrm{j}\theta_{i}}\right)
<1.\label{eqr11}%
\end{equation}

\end{lemma}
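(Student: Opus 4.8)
The plan is to relay strong stability through three reformulations: to a zero–location condition on a characteristic function, then to a polydisc non‑vanishing condition, and finally to the quantity $\bar\gamma_{0}:=\max_{\theta\in[0,2\pi]^{N}}\rho(\sum_{i}A_{i}\mathrm{e}^{\mathrm{j}\theta_{i}})$. First I would invoke the spectral theory of the solution semigroup of (\ref{sys0}) with delay vector $\mathbf{r}=(r_{1},\dots ,r_{N})$: its spectrum is the zero set of $h_{\mathbf{r}}(\lambda):=\det(I-\sum_{i}A_{i}\mathrm{e}^{-\lambda r_{i}})$, and (by the spectral‑mapping property available for difference‑equation semigroups) (\ref{sys0}) is exponentially stable precisely when $\sup\{\operatorname{Re}\lambda:h_{\mathbf{r}}(\lambda)=0\}<0$; strong stability asks this for all delay vectors in a neighbourhood of $\mathbf{r}$ (a by‑product of the proof being that it then holds for every $\mathbf{r}'\in(0,\infty)^{N}$). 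So the target becomes: $h_{\mathbf{r}'}$ is zero‑free on $\{\operatorname{Re}\lambda\ge 0\}$ for all $\mathbf{r}'$ near $\mathbf{r}$ if and only if $\bar\gamma_{0}<1$.

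The key algebraic fact I would establish next is that $\bar\gamma_{0}<1$ is equivalent to $\det(I-\sum_{i}A_{i}z_{i})\neq 0$ for all $(z_{1},\dots ,z_{N})$ with $|z_{i}|\le 1$. One direction is the rescaling trick: if $\bar\gamma_{0}\ge 1$ then some $\sum_{i}A_{i}\mathrm{e}^{\mathrm{j}\theta_{i}^{\star}}$ has an eigenvalue $\nu$ with $|\nu|\ge 1$, and $z_{i}:=\mathrm{e}^{\mathrm{j}\theta_{i}^{\star}}/\nu$ has $|z_{i}|\le 1$ and annihilates the determinant. For the converse I would use that $z\mapsto\log\rho(\sum_{i}A_{i}z_{i})$ is plurisubharmonic on the open polydisc and continuous up to its closure — the eigenvalues of $\sum_{i}A_{i}z_{i}$ are the roots of a monic polynomial with holomorphic coefficients, the log‑moduli of its branches are subharmonic in each single variable, and the maximum of them is subharmonic — so the maximum principle applied one variable at a time forces $\max_{|z_{i}|\le 1}\rho(\sum_{i}A_{i}z_{i})$ onto the distinguished boundary $\{|z_{i}|=1\}$, where it equals $\bar\gamma_{0}$; hence $\bar\gamma_{0}<1$ puts $1\notin\operatorname{spec}(\sum_{i}A_{i}z_{i})$ throughout the polydisc. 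Sufficiency of $\bar\gamma_{0}<1$ is then immediate: for any $\mathbf{r}'$ and any $\operatorname{Re}\lambda\ge 0$ the numbers $z_{i}=\mathrm{e}^{-\lambda r_{i}'}$ lie in the closed unit disc, so $h_{\mathbf{r}'}(\lambda)\neq 0$ and in fact $|h_{\mathbf{r}'}(\lambda)|\ge\delta:=\min\{|\det(I-\sum_{i}A_{i}z_{i})|:|z_{i}|\le 1\}>0$; shifting the half‑plane to $\operatorname{Re}\lambda\ge-\varepsilon$ (the moduli $|\mathrm{e}^{-\lambda r_{i}'}|$ then stay close to $1$ for $\mathbf{r}'$ bounded and $\varepsilon$ small, so by continuity the determinant is still bounded away from $0$) yields exponential stability for all $\mathbf{r}'$ in a neighbourhood of $\mathbf{r}$.

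For necessity I would argue by contraposition: assuming $\bar\gamma_{0}\ge 1$, and after an arbitrarily small perturbation making $r_{1},\dots ,r_{N}$ rationally independent, I would exhibit a right‑half‑plane zero of $h_{\mathbf{r}}$. Pick $\theta^{\star}$ with $\rho(\sum_{i}A_{i}\mathrm{e}^{\mathrm{j}\theta_{i}^{\star}})\ge 1$; the continuous function $s\mapsto\rho(\sum_{i}A_{i}\mathrm{e}^{-sr_{i}}\mathrm{e}^{\mathrm{j}\theta_{i}^{\star}})$ is $\ge 1$ at $s=0$ and tends to $0$ as $s\to\infty$, so at $s_{0}:=\sup\{s\ge 0:\rho(\sum_{i}A_{i}\mathrm{e}^{-sr_{i}}\mathrm{e}^{\mathrm{j}\theta_{i}^{\star}})\ge 1\}\in[0,\infty)$ it equals $1$; taking an eigenvalue $\nu$ of modulus $1$ there and setting $\beta_{i}:=\theta_{i}^{\star}-\arg\nu$ gives $\det(I-\sum_{i}A_{i}\mathrm{e}^{-s_{0}r_{i}}\mathrm{e}^{\mathrm{j}\beta_{i}})=0$. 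By Kronecker's theorem (this is where rational independence is used) there is a sequence $\omega_{m}$ with $\mathrm{e}^{-\mathrm{j}\omega_{m}r_{i}}\to\mathrm{e}^{\mathrm{j}\beta_{i}}$ for every $i$, hence $h_{\mathbf{r}}(\cdot+\mathrm{j}\omega_{m})\to h_{\infty}(\lambda):=\det(I-\sum_{i}A_{i}\mathrm{e}^{-\lambda r_{i}}\mathrm{e}^{\mathrm{j}\beta_{i}})$ uniformly on compacta, with $h_{\infty}(s_{0})=0$ and $h_{\infty}\not\equiv 0$; Hurwitz's theorem then gives, for each $\delta>0$ and all large $m$, a zero of $h_{\mathbf{r}}$ within $\delta$ of $s_{0}+\mathrm{j}\omega_{m}$, so $\sup\{\operatorname{Re}\lambda:h_{\mathbf{r}}(\lambda)=0\}\ge s_{0}\ge 0$ and (\ref{sys0}) is not exponentially stable for delays arbitrarily close to the nominal ones.

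The step I expect to be the main obstacle is this necessity direction — specifically, converting a zero of the frozen determinant inside the polydisc into a genuine right‑half‑plane zero of a nearby‑delay characteristic function. That is exactly where the discontinuity of the spectral abscissa with respect to the delays has to be confronted, and it is what forces the combined use of Kronecker's simultaneous Diophantine approximation together with a Hurwitz / normal‑families argument to upgrade ``$h_{\mathbf{r}}$ takes values arbitrarily close to $0$ on a vertical line'' into ``$h_{\mathbf{r}}$ has an actual zero there''; the plurisubharmonicity‑and‑maximum‑principle reduction of the algebraic step is the other non‑routine ingredient. This statement is precisely Theorem~6.1 of \cite{hale71book} (the Hale--Silkowski characterisation), to which — together with \cite{mvznh09siam} — I would refer for the complete analytic details.
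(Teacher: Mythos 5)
The paper itself offers no proof of Lemma~\ref{lm0}: it is imported verbatim from Theorem~6.1 (Chapter~9) of \cite{hale71book}, so there is no in-paper argument to compare yours against. Judged on its own, your sketch is a faithful and essentially correct reconstruction of the classical Hale--Silkowski proof, with the same three ingredients as the cited literature: (i) exponential stability of (\ref{sys0}) is equivalent to a negative spectral abscissa of $\lambda\mapsto\det\bigl(I-\sum_{i}A_{i}\mathrm{e}^{-\lambda r_{i}}\bigr)$ (the Henry-type spectral result for difference equations, which you correctly invoke rather than reprove); (ii) the reduction of (\ref{eqr11}) to non-vanishing of $\det\bigl(I-\sum_{i}A_{i}z_{i}\bigr)$ on the closed polydisc, where your rescaling trick $z_{i}=\mathrm{e}^{\mathrm{j}\theta_{i}^{\star}}/\nu$ is exactly right and the converse direction needs the plurisubharmonicity of $z\mapsto\log\rho\bigl(\sum_{i}A_{i}z_{i}\bigr)$ (this should be justified via Vesentini's theorem or via the plurisubharmonicity of the log-maximum-modulus of roots of a monic polynomial with holomorphic coefficients, since individual eigenvalue ``branches'' are not globally analytic); and (iii) the necessity direction via an arbitrarily small perturbation to rationally independent delays, Kronecker's simultaneous approximation, and a Hurwitz/normal-families argument to convert the zero of the limiting function $h_{\infty}$ at $s_{0}\geq0$ into actual zeros of $h_{\mathbf{r}}$ with real parts tending to $s_{0}$, which indeed destroys exponential stability for delays arbitrarily close to the nominal ones. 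Your handling of the boundary cases ($s_{0}=0$, and the uniformity in $\operatorname{Im}\lambda$ when shifting to $\operatorname{Re}\lambda\geq-\varepsilon$, which works because the determinant depends on $\lambda$ only through the bounded quantities $\mathrm{e}^{-\lambda r_{i}}$) is sound; the only places needing more care in a full write-up are the plurisubharmonicity justification just mentioned and the precise definition of strong stability (stability locally in the delays) against which the contrapositive is measured, both of which are supplied in \cite{hale71book} and \cite{mvznh09siam}.
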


The strong stability concept is important since in practical applications the
delays are generally subject to small errors \cite{gu10auto}. The test of
strong stability is however rather complex \cite{hv12auto}. Indeed, condition
(\ref{eqr11}) is not tractable in general since the spectral radius should be
tested for all $\theta_{i}\in \left[  0,2\pi \right]  ,i=1,2,\ldots,N.$ Strong
stability of (\ref{sys0}) was tested via deciding positive definiteness of a
multivariate trigonometric polynomial matrix, which is then solved as a
converging hierarchy of LMIs \cite{hv12auto}. The condition in \cite{hv12auto}
needs to compute the characteristic equation of (\ref{sys0}), which is not
explicitly expressed as functions of the coefficients, and thus seems
difficult to be used for robust stability analysis. For a single delay, strong
stability can be checked by computing the generalized eigenvalues of a pair of
matrices \cite{louisell01tac,louisell15siam} as well as the matrix pencil
based approach \cite{niculescu01book}. The method of cluster treatment of
characteristic roots was used in \cite{ovs08siam} to derive the stability maps
of (\ref{sys0}) with three delays. For more related work, see
\cite{gu10auto,hale71book,hv12auto,ovs08siam} and the references therein.

In this note, we restrict ourself to a special case of (\ref{sys0}) where
$N=2$, for which we rewrite (\ref{sys0}) as%
\begin{equation}
x(t)=Ax(t-a)+Bx(t-b), \label{sys}%
\end{equation}
where $a,b$ are positive constants, and $A,B$ are $n\times n$ square matrices.
Regarding the existence of a solution, the continuity/discontinuity of the
solution, and definitions for stability of the solution, readers are suggested
to refer \cite{carvalho96laa} and \cite{hale71book} for details. Notice that,
by Lemma \ref{lm0}, system (\ref{sys}) is strongly stable if and only if%
\begin{equation}
\rho \left(  \Delta_{\theta}\right)  <1,\; \theta \in \left[  0,2\pi \right]
,\text{ }\Delta_{\theta}=A+B\mathrm{e}^{-\mathrm{j}\theta}. \label{eqr}%
\end{equation}

It came to our attention that condition (\ref{eqr}) happens to be equivalent
to the stability of the 2-D linear system described by the
Fornasini-Marchesini second model%
\begin{equation}
x\left(  i+1,j+1\right)  =Ax\left(  i,j+1\right)  +Bx\left(  i+1,j\right)  ,
\label{eq80}%
\end{equation}
which has been well studied in the literature \cite{eih06tac,fcn06auto}. For
stability analysis of (\ref{eq80}), a necessary and sufficient condition
expressed by an LMI of size $3n^{2}\times3n^{2}$ was established in
\cite{eih06tac}.

\begin{lemma}
\label{th1} The system (\ref{sys})/(\ref{eq80}) is strongly/exponentially
stable if and only if
\begin{equation}
\rho \left(  A+B\right)  <1, \label{eq0}%
\end{equation}
and there exist two symmetric matrices $P_{1}\in \mathbf{R}^{n^{2}\times n^{2}%
},P_{2}\in \mathbf{R}^{n^{2}\times n^{2}}$ and a matrix $P_{3}\in
\mathbf{R}^{n^{2}\times n^{2}}$ such that%
\begin{equation}
\left[
\begin{array}
[c]{ccc}%
-P_{1} & 0 & -P_{3}\\
0 & -P_{2} & P_{3}^{\mathrm{T}}\\
-P_{3}^{\mathrm{T}} & P_{3} & P_{1}+P_{2}%
\end{array}
\right]  <E^{\mathrm{T}}E, \label{eq1}%
\end{equation}
where $E=[B^{\mathrm{T}}\otimes A,A^{\mathrm{T}}\otimes B,A^{\mathrm{T}%
}\otimes A+B^{\mathrm{T}}\otimes B-I_{n}\otimes I_{n}].$
\end{lemma}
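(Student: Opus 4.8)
The plan is to establish the chain of equivalences: strong stability of (\ref{sys}) / exponential stability of (\ref{eq80}) $\Longleftrightarrow$ the frequency-domain test (\ref{eqr}) $\Longleftrightarrow$ condition (\ref{eq0}) together with the LMI (\ref{eq1}). The first equivalence is Lemma~\ref{lm0} specialized to $N=2$, and the direction (\ref{eqr})$\,\Rightarrow\,$(\ref{eq0}) is trivial (put $\theta=0$); so (\ref{eq0}) may be assumed throughout, and the substance is the equivalence of (\ref{eqr}) with (\ref{eq0})$\,\wedge\,$(\ref{eq1}). Note that (\ref{eq1}) alone must be \emph{strictly weaker} than (\ref{eqr}) --- otherwise (\ref{eq0}) would be redundant in the statement --- which is exactly why the scalar test (\ref{eq0}) has to be appended for sufficiency.

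For the reduction to a two-dimensional (2-D) problem I would use the following chain. Since $\Delta_{w}=A+wB$ is entire in $w$, the spectral radius $w\mapsto\rho(\Delta_{w})$ is subharmonic (Vesentini), hence obeys the maximum principle on $\{|w|\le1\}$; combined with the elementary identity $\rho(A+\mathrm{e}^{-\mathrm{j}\theta}B)=\rho(B+\mathrm{e}^{\mathrm{j}\theta}A)$, this shows that (\ref{eqr}) is equivalent to $\det(I-uA-vB)\neq0$ for \emph{all} $|u|\le1$, $|v|\le1$, i.e. to asymptotic (hence exponential) stability of the Fornasini--Marchesini model (\ref{eq80}), as recalled above. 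The quickest complete proof now just invokes the necessary-and-sufficient LMI stability test for the FM second model from \cite{eih06tac}: specializing its coefficient triple $(A_{0},A_{1},A_{2})$ to $(0,B,A)$ reduces that test exactly to (\ref{eq0})--(\ref{eq1}) with the stated $E$, the scalar part (\ref{eq0}) being the condition at the corner $u=v=1$.

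If a self-contained derivation of the LMI is wanted, I would run a Lyapunov/dissipativity argument. By the discrete Lyapunov lemma applied pointwise, $\rho(\Delta_{\theta})<1$ for all $\theta$ amounts to the existence, for each $\theta$, of $Q>0$ with $\Delta_{\theta}^{\mathrm{H}}Q\Delta_{\theta}-Q<0$; the task is to produce a single certificate of bounded degree in $\mathrm{e}^{\pm\mathrm{j}\theta}$, whose Fourier coefficients will be $P_{1},P_{2},P_{3}$. Vectorizing the matrix inequality with $\mathrm{vec}$ and the identity $(X^{\mathrm{T}}\otimes Y)\,\mathrm{vec}(Z)=\mathrm{vec}(YZX)$ turns it into a Hermitian trigonometric-polynomial inequality in $\mathrm{e}^{\pm\mathrm{j}\theta}$ whose coefficient blocks are precisely $B^{\mathrm{T}}\otimes A$, $A^{\mathrm{T}}\otimes B$ and $A^{\mathrm{T}}\otimes A+B^{\mathrm{T}}\otimes B-I_{n}\otimes I_{n}$ --- the three block-columns of $E$. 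A lossless S-procedure / generalized KYP step (lossless thanks to the special FM structure) then converts the requirement that this polynomial be negative for all $\theta$ into feasibility of the single LMI (\ref{eq1}) in $P_{1},P_{2},P_{3}$, the residual scalar obstruction it cannot see (morally, the behaviour at $\theta=0$, i.e. $u=v=1$) being removed exactly by (\ref{eq0}); the sufficiency direction is obtained by running the construction backwards, reading off a valid certificate from feasible $(P_{1},P_{2},P_{3})$ together with (\ref{eq0}) and concluding $\rho(\Delta_{\theta})<1$ for every $\theta$.

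The hard part is this last step: not the \emph{sufficiency} of (\ref{eq1}) as a stability certificate (routine), but its \emph{necessity} --- that a single finite-dimensional inequality captures the whole continuum $\{\rho(\Delta_{\theta})<1\}_{\theta\in[0,2\pi]}$ --- together with pinning down (\ref{eq0}) as exactly the missing scalar condition. This losslessness is special to the FM-second-model structure and is where the real work of \cite{eih06tac} lies; reproducing it is a careful application of a generalized KYP lemma rather than a new idea.
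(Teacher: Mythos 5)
Your self-contained route has a genuine gap, and it sits exactly where you park the difficulty. The paper's proof (Appendix A1) does \emph{not} certify $\rho(\Delta_{\theta})<1$ by a bounded-degree parameter-dependent Lyapunov matrix $Q(\theta)$ whose Fourier coefficients become $P_{1},P_{2},P_{3}$. Its key step is a guardian-map reduction: $\rho(\Delta_{\theta})<1$ for all $\theta$ is equivalent to $\rho(A+B)<1$ (condition (\ref{eq0}), anchoring stability at $\theta=0$) together with the eigenvalue-crossing obstruction $\left\vert \Delta_{\theta}^{\mathrm{H}}\otimes\Delta_{\theta}-I_{n^{2}}\right\vert\neq0$ for all $\theta$, by continuity of the spectrum in $\theta$. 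That determinant condition is then recognized, after factoring out $\mathrm{e}^{-n^{2}\mathrm{j}\theta}$, as nonsingularity of $G_{0}(\mathrm{e}^{\mathrm{j}\theta})=\mathcal{C}_{0}(\mathrm{e}^{\mathrm{j}\theta}I_{2n^{2}}-\mathcal{A}_{0})^{-1}\mathcal{B}_{0}+\mathcal{D}_{0}$ for an explicit realization in which $\mathcal{A}_{0}$ is a nilpotent shift and $(\mathcal{C}_{0},\mathcal{D}_{0})$ carry the blocks $B^{\mathrm{T}}\otimes A$, $A^{\mathrm{T}}\otimes B$, $A^{\mathrm{T}}\otimes A+B^{\mathrm{T}}\otimes B-I_{n}\otimes I_{n}$; writing nonsingularity as $-G_{0}^{\mathrm{H}}G_{0}<0$ makes it a standard single-frequency-variable condition to which the ordinary discrete-time YKP lemma (Lemma \ref{lm7}) applies \emph{losslessly}, and partitioning the certificate $P$ as in (\ref{eq8}) yields (\ref{eq1}) after a permutation. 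Your proposed mechanism cannot substitute for this: necessity of a fixed-degree $\theta$-dependent Lyapunov certificate for $\rho(\Delta_{\theta})<1$ is precisely what is \emph{not} true at a prescribed size (this is why Lemma \ref{th2}, Theorem \ref{th4} and the method of \cite{hv12auto} all require a hierarchy in $k$), and the structure of (\ref{eq1}) --- an indefinite certificate with blocks $-P_{1},-P_{2},\pm P_{3}$ and the quadratic term $E^{\mathrm{T}}E$ on the right --- is not of the form your Fourier-coefficient reading would produce. Asserting "lossless thanks to the special FM structure" is naming the theorem to be proved, not proving it.

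Your fallback of quoting \cite{eih06tac} also does not close the argument for the lemma as stated: Theorem 1 there gives the variant with $E_{\ast}=[B\otimes A,\,A\otimes B,\,A\otimes A+B\otimes B-I_{n}\otimes I_{n}]$, whereas (\ref{eq1}) uses $E$ with the left Kronecker factors transposed; the paper explicitly flags this discrepancy and supplies its own YKP-based proof for that reason, so an additional argument (or a rerun of the derivation with $\Delta_{\theta}^{\mathrm{H}}\otimes\Delta_{\theta}$, as the paper does) would be needed. Your observations about the role of (\ref{eq0}) and the subharmonicity/maximum-principle link between (\ref{eqr}) and stability of (\ref{eq80}) are fine, but they are the easy part; the finite LMI characterization itself is what remains unproved in your write-up.
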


This result is almost the same as Theorem 1 in \cite{eih06tac}, where $E$ is
replaced by $E_{\ast}=[B\otimes A,A\otimes B,A\otimes A+B\otimes
B-I_{n}\otimes I_{n}].$ The proof given in \cite{eih06tac} is based on the
Guardian map and the positive real lemma. Motivated by \cite{ddp18auto}, we
provide in Appendix a simple proof based on the well-known
Yakubovich-Kalman-Popov (YKP) lemma. Another necessary and sufficient
conditions, which involve the generalized eigenvalues of two matrices with
size $2n^{2}\times2n^{2},$ were obtained in \cite{fcn06auto}, which were also
established initially for testing stability of the 2-D linear system
(\ref{eq80}).

Bliman established in \cite{bliman02mssp} another LMI based necessary and
sufficient conditions for testing stability of (\ref{eq80}). To introduce this
result, for any $k\in \mathbf{N}^{+},$ we define
\begin{align}
\overline{A}_{k}  &  =\left[
\begin{array}
[c]{ccccc}%
0 & B & AB & \cdots & A^{k-2}B\\
& 0 & B & \cdots & A^{k-3}B\\
&  & \ddots & \ddots & \vdots \\
&  &  & 0 & B\\
&  &  &  & 0
\end{array}
\right]  \in \mathbf{R}^{kn\times kn},\overline{B}_{k}=\left[
\begin{array}
[c]{c}%
A^{k-1}\\
A^{k-2}\\
\vdots \\
A\\
I_{n}%
\end{array}
\right]  \in \mathbf{R}^{kn\times n},\label{eqab1}\\
\overline{\mathscr{A}}_{k}  &  =\left[
\begin{array}
[c]{ccccc}%
B & AB & A^{2}B & \cdots & A^{k-1}B\\
& B & AB & \cdots & A^{k-2}B\\
&  & \ddots & \ddots & \vdots \\
&  &  & B & AB\\
&  &  &  & B
\end{array}
\right]  \in \mathbf{R}^{kn\times kn},\overline{\mathscr{B}}_{k}=\left[
\begin{array}
[c]{c}%
A^{k}\\
A^{k-1}\\
\vdots \\
A^{2}\\
A
\end{array}
\right]  \in \mathbf{R}^{kn\times n}. \label{eqab2}%
\end{align}
For two symmetric matrices $\overline{P},\overline{Q}\in \mathbf{R}^{kn\times
kn},$ we define a linear function $\overline{\mathit{\Omega}}_{k}\left(
\overline{P},\overline{Q}\right)  \in \mathbf{R}^{\left(  k+1\right)
n\times \left(  k+1\right)  n}$ as%
\begin{equation}
\overline{\mathit{\Omega}}_{k}\left(  \overline{P},\overline{Q}\right)
=\left[
\begin{array}
[c]{cc}%
\overline{A}_{k}^{\mathrm{T}}\overline{P}\overline{A}_{k}-\overline
{P}+\overline{\mathscr{A}}_{k}^{\mathrm{T}}\overline{Q}\overline
{\mathscr{A}}_{k}-\overline{A}_{k}^{\mathrm{T}}\overline{Q}\overline{A}_{k} &
\overline{A}_{k}^{\mathrm{T}}\overline{P}\overline{B}_{k}+\overline
{\mathscr{A}}_{k}^{\mathrm{T}}\overline{Q}\overline{\mathscr{B}}_{k}%
-\overline{A}_{k}^{\mathrm{T}}\overline{Q}\overline{B}_{k}\\
\overline{B}_{k}^{\mathrm{T}}\overline{P}\overline{A}_{k}+\overline
{\mathscr{B}}_{k}^{\mathrm{T}}\overline{Q}\overline{\mathscr{A}}_{k}%
-\overline{B}_{k}^{\mathrm{T}}\overline{Q}\overline{A}_{k} & \overline{B}%
_{k}^{\mathrm{T}}\overline{P}\overline{B}_{k}+\overline{\mathscr{B}}%
_{k}^{\mathrm{T}}\overline{Q}\overline{\mathscr{B}}_{k}-\overline{B}%
_{k}^{\mathrm{T}}\overline{Q}\overline{B}_{k}%
\end{array}
\right]  . \label{eqomegak}%
\end{equation}

\begin{lemma}
\label{th2}\cite{bliman02mssp} If there exist positive definite matrices
$\overline{P}_{k},\overline{Q}_{k}\in \mathbf{R}^{kn\times kn}$ such that%
\begin{equation}
\overline{\mathit{\Omega}}_{k}\left(  \overline{P}_{k},\overline{Q}%
_{k}\right)  <0, \label{eqz}%
\end{equation}
then system (\ref{sys})/(\ref{eq80}) is stable. Moreover, if (\ref{sys}%
)/(\ref{eq80}) is stable, there exists an integer $k^{\ast}\geq1$, such that
(\ref{eqz}) is solvable with $\overline{P}_{k}>0,\overline{Q}_{k}>0,\forall
k\geq k^{\ast}.$
\end{lemma}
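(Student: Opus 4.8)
The plan is to route everything through the frequency‑domain test in~(\ref{eqr}): by Lemma~\ref{lm0}, system~(\ref{sys})/(\ref{eq80}) is strongly/exponentially stable if and only if $\rho(\Delta_{\theta})<1$ for every $\theta\in[0,2\pi]$, where $\Delta_{\theta}=A+B\mathrm{e}^{-\mathrm{j}\theta}$, so both halves of the lemma would be obtained by tying~(\ref{eqz}) to this test. Write $(\cdot)^{(i)}$ for the $i$‑th $n$‑block of a vector. Two elementary facts about~(\ref{eqab1})--(\ref{eqab2}) carry the argument. First, a direct computation gives $\overline{\mathscr{A}}_{k}=(I_{k}\otimes A)\overline{A}_{k}+I_{k}\otimes B$ and $\overline{\mathscr{B}}_{k}=(I_{k}\otimes A)\overline{B}_{k}$, so that for $\xi=[\xi_{1}^{\mathrm{T}},\xi_{2}^{\mathrm{T}}]^{\mathrm{T}}$ with $\xi_{1}\in\mathbf{C}^{kn}$, $\xi_{2}\in\mathbf{C}^{n}$, and with $a=\overline{A}_{k}\xi_{1}+\overline{B}_{k}\xi_{2}$, $\alpha=\overline{\mathscr{A}}_{k}\xi_{1}+\overline{\mathscr{B}}_{k}\xi_{2}$, the quadratic form of~(\ref{eqomegak}) collapses to
\[
\xi^{\mathrm{H}}\overline{\mathit{\Omega}}_{k}(\overline{P},\overline{Q})\,\xi=a^{\mathrm{H}}\overline{P}a-\xi_{1}^{\mathrm{H}}\overline{P}\xi_{1}+\alpha^{\mathrm{H}}\overline{Q}\alpha-a^{\mathrm{H}}\overline{Q}a .
\]
Second, if $\Delta_{\theta_{0}}v=\lambda v$ with $v\neq0$ and one sets $\xi_{2}=v$ and $\xi_{1}^{(l)}=\lambda^{k-l}\mathrm{e}^{-\mathrm{j}\theta_{0}}v$ for $l=1,\dots,k$, then --- using $Av+\mathrm{e}^{-\mathrm{j}\theta_{0}}Bv=\lambda v$ and a telescoping sum --- one gets $a^{(i)}=\lambda^{k-i}v$, $\alpha^{(i)}=\lambda^{k-i+1}v$, $\xi_{1}^{(i)}=\mathrm{e}^{-\mathrm{j}\theta_{0}}\lambda^{k-i}v$ for all $i$; hence $\alpha=\lambda a$, $\xi_{1}=\mathrm{e}^{-\mathrm{j}\theta_{0}}a$, and $a\neq0$ since $a^{(k)}=v$.

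Sufficiency follows immediately. Suppose $\overline{P}_{k},\overline{Q}_{k}>0$ satisfy~(\ref{eqz}) but the system is unstable; by~(\ref{eqr}) there exist $\theta_{0}$ and an eigenpair $\Delta_{\theta_{0}}v=\lambda v$, $v\neq0$, with $|\lambda|\geq1$. Plugging the $\xi$ of the second fact (nonzero, as $\xi_{2}=v$) into the collapsed form and using $|\mathrm{e}^{-\mathrm{j}\theta_{0}}|=1$ gives
\[
\xi^{\mathrm{H}}\overline{\mathit{\Omega}}_{k}(\overline{P}_{k},\overline{Q}_{k})\,\xi=\bigl(|\lambda|^{2}-1\bigr)\,a^{\mathrm{H}}\overline{Q}_{k}a\ \geq\ 0 ,
\]
since $\overline{Q}_{k}>0$ and $a\neq0$ (using that a real symmetric matrix is negative definite exactly when the associated complex quadratic form is negative on every nonzero complex vector). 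This contradicts~(\ref{eqz}); so $\rho(\Delta_{\theta})<1$ for all $\theta$ and the system is stable. Note that only $\overline{Q}_{k}>0$ was needed, and taking $\theta_{0}=0$ incidentally recovers~(\ref{eq0}).

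For the ``moreover'' part --- stability implies~(\ref{eqz}) is solvable with $\overline{P}_{k},\overline{Q}_{k}>0$ for all $k\geq k^{\ast}$ --- I would argue constructively. Stability gives $r:=\max_{\theta}\rho(\Delta_{\theta})<1$, so by continuity and compactness $\|\Delta_{\theta}^{l}\|\leq Cr_{1}^{\,l}$ uniformly in $\theta,l$ for some $r_{1}\in(r,1)$ and $C$; hence the truncation $X_{N}(\theta)=\sum_{l=0}^{N}(\Delta_{\theta}^{l})^{\mathrm{H}}\Delta_{\theta}^{l}$ --- a Hermitian trigonometric‑polynomial matrix of degree $\leq N$, since $\Delta_{\theta}$ is affine in $\mathrm{e}^{-\mathrm{j}\theta}$ --- satisfies $X_{N}(\theta)\geq I_{n}$, and $X_{N}(\theta)-\Delta_{\theta}^{\mathrm{H}}X_{N}(\theta)\Delta_{\theta}=I_{n}-(\Delta_{\theta}^{N+1})^{\mathrm{H}}\Delta_{\theta}^{N+1}$, which is positive definite once $C^{2}r_{1}^{2(N+1)}<1$. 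Thus for all large $N$ one has an explicit strictly positive polynomial Lyapunov certificate $X_{N}$ for $\Delta_{\theta}$, and the task reduces to recasting such a certificate in the parametrization~(\ref{eqomegak}): one reads $X_{N}(\theta)>0$ together with $X_{N}(\theta)-\Delta_{\theta}^{\mathrm{H}}X_{N}(\theta)\Delta_{\theta}>0$ as a strict positive‑real / spectral‑factorization condition attached to the realization $(\overline{A}_{k},\overline{B}_{k},\overline{\mathscr{A}}_{k},\overline{\mathscr{B}}_{k})$ --- whose blocks precisely encode the truncated Neumann series in $A\mathrm{e}^{\mathrm{j}\theta}$ that appears when $\Delta_{\theta}^{l}$ is expanded --- and applies a YKP‑lemma argument (as used for Lemma~\ref{th1} in the Appendix) to extract $\overline{P}_{k},\overline{Q}_{k}>0$ satisfying~(\ref{eqz}) for every $k$ at least a fixed constant above $N$. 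Letting $N$ vary then makes~(\ref{eqz}) solvable for all $k$ beyond some $k^{\ast}$; alternatively, monotonicity in $k$ follows from the nested block structure of~(\ref{eqab1})--(\ref{eqab2}) by padding a feasible pair with a small positive block.

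I expect the genuine obstacle to be exactly this last recasting in the necessity direction. Expressing $X_{N}$ as a plain Gram form $v_{k+1}(\theta)^{\mathrm{H}}Mv_{k+1}(\theta)$, with $v_{k+1}(\theta)=[I_{n},\mathrm{e}^{\mathrm{j}\theta}I_{n},\dots,\mathrm{e}^{\mathrm{j}k\theta}I_{n}]^{\mathrm{T}}$, via a matrix Fej\'er--Riesz factorization produces only a rank‑deficient $M$ and, in any event, does not by itself deliver $(\overline{P}_{k},\overline{Q}_{k})$ in the precise shape of~(\ref{eqomegak}); one must use both the slack $X_{N}\geq I_{n}$ and the specific state‑space bookkeeping carried by $\overline{A}_{k}$ and $\overline{\mathscr{A}}_{k}$, and then keep careful track of how the degree $N$ needed for the decrease inequality inflates into $k^{\ast}$ and of the fact that solvability must persist for \emph{all} $k\geq k^{\ast}$, not merely for one value. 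The sufficiency half, by contrast, is short and self‑contained once the two facts above are in place.
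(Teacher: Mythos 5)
Your sufficiency half is correct and self-contained: the identities $\overline{\mathscr{A}}_{k}=(I_{k}\otimes A)\overline{A}_{k}+I_{k}\otimes B$ and $\overline{\mathscr{B}}_{k}=(I_{k}\otimes A)\overline{B}_{k}$ do hold, your test vector built from an eigenpair $\Delta_{\theta_{0}}v=\lambda v$ really gives $a^{(i)}=\lambda^{k-i}v$, $\alpha=\lambda a$, $\xi_{1}=\mathrm{e}^{-\mathrm{j}\theta_{0}}a$, and the value $(|\lambda|^{2}-1)a^{\mathrm{H}}\overline{Q}_{k}a\geq 0$ contradicts (\ref{eqz}), so (\ref{eqr}) holds. (Note the paper itself gives no proof of this cited lemma; the closest in-paper argument is the proof of Theorem \ref{th4} together with the congruence (\ref{eq46}).) Your eigenvector argument is, if anything, more elementary than routing the sufficiency through a Lyapunov--Krasovskii functional.

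The genuine gap is exactly where you flag it: the ``moreover'' direction is only a plan, and it is the hard half of the lemma. Building $X_{N}(\theta)=\sum_{l=0}^{N}(\Delta_{\theta}^{l})^{\mathrm{H}}\Delta_{\theta}^{l}$ and invoking ``a strict positive-real / spectral-factorization condition plus a YKP-lemma argument'' does not yet produce matrices $\overline{P}_{k},\overline{Q}_{k}>0$ in the specific shape (\ref{eqomegak}); you never choose $\overline{Q}_{k}$, never exhibit the realization identity that makes the YKP lemma applicable to this structured form, and never prove positive definiteness of the resulting $\overline{P}_{k}$ (YKP only returns a symmetric matrix). The missing steps are precisely those carried out in the paper's proof of the analogous Theorem \ref{th4}, and they transplant directly to Bliman's coordinates: by Lemma \ref{lm9} pick $k^{\ast}$ with $\|\Delta_{\theta}^{k}\|<1$ for all $\theta$ and all $k\geq k^{\ast}$ (this also settles ``for all $k\geq k^{\ast}$'' without any padding/monotonicity argument); take $\overline{Q}_{k}=I_{kn}$; verify the resolvent identity that the $i$-th block of $(\mathrm{e}^{\mathrm{j}\theta}I_{kn}-\overline{A}_{k})^{-1}\overline{B}_{k}$ is $\mathrm{e}^{-\mathrm{j}\theta}\Delta_{\theta}^{k-i}$, so that along this vector the $\overline{Q}$-part of (\ref{eqomegak}) telescopes to $(\Delta_{\theta}^{k})^{\mathrm{H}}\Delta_{\theta}^{k}-I_{n}<0$; apply the YKP lemma (Lemma \ref{lm7}, legitimate since $\overline{A}_{k}$ is nilpotent, hence Schur) to obtain a symmetric $\overline{P}_{k}$ with $\overline{\mathit{\Omega}}_{k}(\overline{P}_{k},I_{kn})<0$; finally, since deleting the first block row of $\overline{\mathscr{A}}_{k}$ leaves the first $k-1$ block rows of $\overline{A}_{k}$, one has $\overline{\mathscr{A}}_{k}^{\mathrm{T}}\overline{\mathscr{A}}_{k}-\overline{A}_{k}^{\mathrm{T}}\overline{A}_{k}=\overline{C}_{k}^{\mathrm{T}}\overline{C}_{k}\geq 0$ with $\overline{C}_{k}$ as in (\ref{eqcd}), so the $(1,1)$ block gives $\overline{A}_{k}^{\mathrm{T}}\overline{P}_{k}\overline{A}_{k}-\overline{P}_{k}<0$ and nilpotency of $\overline{A}_{k}$ forces $\overline{P}_{k}>0$. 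Without this construction (or an equivalent one, e.g.\ deducing the claim from Theorem \ref{th4} and (\ref{eq46}), which needs extra care because $W_{k}$ is singular when $B$ is), the necessity half remains unproven; your Fej\'er--Riesz route is both heavier and, as you concede, stops short of the required structured certificate.
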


Notice that Lemma \ref{th2} is slightly different from the original one in
\cite{bliman02mssp} where the result is built for a general 2-D linear system,
and is expressed in a recursive form. Even for $k=2,$ the LMI in Lemma
\ref{th2} is nonlinear in $A$ and $B$, and thus can not be used for robust
stability analysis.

In this note, motivated by \cite{bliman02mssp}, we will establish a new
necessary and sufficient condition for testing strong stability of system
(\ref{sys}). Different from Lemmas \ref{th1} and \ref{th2}, the proposed LMI
condition involves matrices that are linear functions of $\left(  A,B\right)
.$ With the help of this property, the robust stability problem in case of
norm bounded uncertainty is investigated, and the results are also expressed
by LMIs (see Section \ref{sec2}). We also give time-domain interpretations of
the proposed LMI condition and the Bliman condition, which help to reveal the
relationships among them and the other existing methods such as those in
\cite{carvalho96laa} and \cite{ddmb16tac} (see Section \ref{sec3}).

\section{\label{sec2}The Necessary and Sufficient Conditions}

For any $k\in \mathbf{N}^{+},$ we denote%
\begin{equation}
A_{k}=\left[
\begin{array}
[c]{cc}%
0 & I_{\left(  k-1\right)  n}\\
0 & 0
\end{array}
\right]  \in \mathbf{R}^{kn\times kn},\;B_{k}=\left[
\begin{array}
[c]{c}%
0\\
I_{n}%
\end{array}
\right]  \in \mathbf{R}^{kn\times n}, \label{eqab3}%
\end{equation}
and
\begin{equation}
L_{k}=\left[
\begin{array}
[c]{cc}%
I_{kn} & 0_{kn\times n}%
\end{array}
\right]  \in \mathbf{R}^{kn\times \left(  k+1\right)  n}, \label{eqlk}%
\end{equation}
which are independent of $\left(  A,B\right)  ,$ and%
\begin{equation}
\mathscr{A}_{k}=\left[
\begin{array}
[c]{ccccc}%
B & A & 0 & \cdots & 0\\
& B & A & \ddots & \vdots \\
&  & \ddots & \ddots & 0\\
&  &  & B & A\\
&  &  &  & B
\end{array}
\right]  \in \mathbf{R}^{kn\times kn},\; \mathscr{B}_{k}=\left[
\begin{array}
[c]{c}%
0\\
0\\
\vdots \\
0\\
A
\end{array}
\right]  \in \mathbf{R}^{kn\times n}, \label{eqab4}%
\end{equation}
which are linear matrix functions of $\left(  A,B\right)  .$ For two symmetric
matrices $P,Q\in \mathbf{R}^{kn\times kn},$ we define%
\begin{align}
\mathit{\Omega}_{k1}\left(  P,Q\right)  =  &  \left[
\begin{array}
[c]{cc}%
A_{k} & B_{k}%
\end{array}
\right]  ^{\mathrm{T}}\left(  P-Q\right)  \left[
\begin{array}
[c]{cc}%
A_{k} & B_{k}%
\end{array}
\right]  -L_{k}^{\mathrm{T}}PL_{k},\nonumber \\
\mathit{\Omega}_{k}\left(  P,Q\right)  =  &  \mathit{\Omega}_{k1}\left(
P,Q\right)  +\left[
\begin{array}
[c]{cc}%
\mathscr{A}_{k} & \mathscr{B}_{k}%
\end{array}
\right]  ^{\mathrm{T}}Q\left[
\begin{array}
[c]{cc}%
\mathscr{A}_{k} & \mathscr{B}_{k}%
\end{array}
\right]  , \label{eq77b}%
\end{align}
which are linear functions of $P,Q$ and, moreover, $\mathit{\Omega}%
_{k1}\left(  P,Q\right)  $ is independent of $\left(  A,B\right)  .$

\begin{theorem}
\label{th4}If there exist positive definite matrices $P_{k},Q_{k}\in
\mathbf{R}^{kn\times kn}$ such that%
\begin{equation}
\mathit{\Omega}_{k}\left(  P_{k},Q_{k}\right)  <0, \label{eq55}%
\end{equation}
then system (\ref{sys}) is strongly stable. Moreover, if system (\ref{sys}) is
strongly stable, there exists an integer $k^{\ast}\geq1$, such that
(\ref{eq55}) is solvable with $P_{k}>0,Q_{k}>0,\forall k\geq k^{\ast}.$
\end{theorem}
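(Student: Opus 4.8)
I would prove the two implications separately. The first ("LMI solvable $\Rightarrow$ strongly stable") has a short direct proof via the spectral test (\ref{eqr}); the second ("strongly stable $\Rightarrow$ LMI solvable for all large $k$") I would obtain by reduction to Bliman's Lemma~\ref{th2}.

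\textbf{Sufficiency.} Let $P_{k},Q_{k}>0$ satisfy $\mathit{\Omega}_{k}(P_{k},Q_{k})<0$. As $\mathit{\Omega}_{k}(P_{k},Q_{k})$ is real symmetric and negative definite, $\xi^{\mathrm{H}}\mathit{\Omega}_{k}(P_{k},Q_{k})\xi<0$ for every nonzero $\xi\in\mathbf{C}^{(k+1)n}$. I argue by contradiction: if (\ref{sys}) is not strongly stable, then by Lemma~\ref{lm0} there are $\theta_{0}\in[0,2\pi]$, $v\in\mathbf{C}^{n}\setminus\{0\}$ and $\mu\in\mathbf{C}$ with $|\mu|\ge 1$ and $(A+B\mathrm{e}^{-\mathrm{j}\theta_{0}})v=\mu v$, i.e., with $\beta:=\mathrm{e}^{\mathrm{j}\theta_{0}}$ (so $|\beta|=1$), $(B+\beta A)v=\mu\beta v$. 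Testing the LMI on the structured vector $\xi=[v^{\mathrm{T}},\beta v^{\mathrm{T}},\beta^{2}v^{\mathrm{T}},\ldots,\beta^{k}v^{\mathrm{T}}]^{\mathrm{T}}$, the bidiagonal form (\ref{eqab4}) of $(\mathscr{A}_{k},\mathscr{B}_{k})$ and the pure-shift form (\ref{eqab3}) of $(A_{k},B_{k})$ yield
\[
\left[\begin{array}{cc}\mathscr{A}_{k} & \mathscr{B}_{k}\end{array}\right]\xi=\mu\left[\begin{array}{cc}A_{k} & B_{k}\end{array}\right]\xi ,\qquad L_{k}\xi=\beta^{-1}\left[\begin{array}{cc}A_{k} & B_{k}\end{array}\right]\xi .
\]
Setting $\eta=[A_{k}\ B_{k}]\xi\neq 0$ and substituting into (\ref{eq77b}),
\[
\xi^{\mathrm{H}}\mathit{\Omega}_{k}(P_{k},Q_{k})\xi=(1-|\beta|^{-2})\,\eta^{\mathrm{H}}P_{k}\eta+(|\mu|^{2}-1)\,\eta^{\mathrm{H}}Q_{k}\eta=(|\mu|^{2}-1)\,\eta^{\mathrm{H}}Q_{k}\eta\ge 0 ,
\]
contradicting $\mathit{\Omega}_{k}(P_{k},Q_{k})<0$. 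Hence (\ref{eqr}) holds and (\ref{sys}) is strongly stable. (Only $Q_{k}>0$ and $P_{k}=P_{k}^{\mathrm{T}}$ are actually used here.)

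\textbf{Necessity.} Suppose (\ref{sys}) is strongly stable. By Lemma~\ref{th2} there is $m^{\ast}\ge 1$ such that (\ref{eqz}) is solvable with $\overline{P}_{m},\overline{Q}_{m}>0$ for all $m\ge m^{\ast}$. The task is then to build, from such a solution, positive definite $P_{k},Q_{k}$ with $\mathit{\Omega}_{k}(P_{k},Q_{k})<0$ for all $k$ past some $k^{\ast}$. The link is structural: the "accumulated" blocks $\overline{A}_{k},\overline{B}_{k},\overline{\mathscr{A}}_{k},\overline{\mathscr{B}}_{k}$ of (\ref{eqab1})--(\ref{eqab2}) are exactly the maps obtained by iterating the "one-step" shift/bidiagonal maps $[A_{k}\ B_{k}]$ and $[\mathscr{A}_{k}\ \mathscr{B}_{k}]$ behind (\ref{eqab3})--(\ref{eqab4}) (the powers of $A$ in Bliman's matrices arise by repeatedly applying $x\mapsto Ax$). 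Hence one can "unfold" Bliman's quadratic form onto a larger set of block-coordinates -- one extra block per accumulated $A$-power -- so that only the linear maps $(A_{k},B_{k},\mathscr{A}_{k},\mathscr{B}_{k})$ remain, carry $\overline{P}_{m},\overline{Q}_{m}$ through this ($(A,B)$-dependent) block extension, and pad the new coordinates with small multiples of the identity to obtain $P_{k},Q_{k}>0$ with $\mathit{\Omega}_{k}(P_{k},Q_{k})<0$ for $k$ large. Finally, since the matrices in (\ref{eqab3})--(\ref{eqab4}) are nested in $k$ (each is the leading principal block of its successor), a routine padding/perturbation argument (take $P_{k+1}=\mathrm{diag}(P_{k},\varepsilon I_{n})$, $Q_{k+1}=\mathrm{diag}(Q_{k},\varepsilon I_{n})$ with $\varepsilon>0$ small) promotes solvability at one level to solvability at all higher levels, giving the "for all $k\ge k^{\ast}$" conclusion.

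\textbf{Main obstacle.} The delicate point is the structural reduction above: the two LMI families coincide only at $k=1$ and are genuinely different for $k\ge 2$, so the passage from (\ref{eqz}) to (\ref{eq55}) is not a congruence but a block extension whose size $k^{\ast}$ must be controlled in terms of $m^{\ast}$ and of the $A$-degrees appearing in (\ref{eqab1})--(\ref{eqab2}), while keeping positive definiteness and strict negative definiteness through both the extension and the padding. A self-contained alternative that avoids Bliman is to reproduce the converse part of his argument directly: strong stability gives the frequency-domain positivity recorded in (\ref{eqr}), and a matrix Fej\'er--Riesz / truncated-Toeplitz construction then produces a trigonometric-polynomial Lyapunov certificate whose order-$k$ truncation is exactly a feasible point of (\ref{eq55}) once $k$ is large; the cost there is a quantitative truncation estimate rather than block bookkeeping.
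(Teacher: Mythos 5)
Your sufficiency argument is correct, and it is essentially the paper's own computation in scalarized form: the paper tests $\mathit{\Omega}_{k}(P_{k},Q_{k})$ on the structured matrix $[z_{k}^{\mathrm{T}},I_{n}]^{\mathrm{T}}$ with $z_{k}=(\mathrm{e}^{\mathrm{j}\theta}I_{kn}-A_{k})^{-1}B_{k}$ and reads off $\Delta_{\theta}^{\mathrm{H}}z_{k}^{\mathrm{H}}Q_{k}z_{k}\Delta_{\theta}-z_{k}^{\mathrm{H}}Q_{k}z_{k}<0$, hence (\ref{eqr}); your eigenvector-and-contradiction version of the same identities is a harmless variant. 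The necessity half, however, is not a proof: its central step is explicitly deferred, and the one concrete device you do commit to is wrong. First, your structural claim is mistaken: the passage between (\ref{eqz}) and (\ref{eq55}) is \emph{not} a "block extension with extra coordinates" -- the paper (Section \ref{sec3}) exhibits, for every $k$, an exact congruence $\mathit{\Omega}_{k}(W_{k}^{\mathrm{T}}\overline{P}_{k}W_{k},W_{k}^{\mathrm{T}}\overline{Q}_{k}W_{k})=T_{k}^{\mathrm{T}}\overline{\mathit{\Omega}}_{k}(\overline{P}_{k},\overline{Q}_{k})T_{k}$ with $T_{k}=\mathrm{diag}(W_{k},I_{n})$ and $W_{k}$ the shuffle-product matrix (\ref{eqwk}); even if you had found this, you would still have to address that $W_{k}$ is singular whenever $B$ is (so strictness of the LMI and positivity of $Q_{k}$ are lost and require an $\varepsilon$-perturbation), and you would still have to prove $P_{k}>0$, neither of which your sketch touches. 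Second, the padding step "$P_{k+1}=\mathrm{diag}(P_{k},\varepsilon I_{n})$, $Q_{k+1}=\mathrm{diag}(Q_{k},\varepsilon I_{n})$" does not promote feasibility from $k$ to $k+1$: taking a test vector supported only on the last block, say $\xi=(0,\ldots,0,\xi_{k+2})$, one has $L_{k+1}\xi=0$, $[A_{k+1}\;B_{k+1}]\xi=(0,\ldots,0,\xi_{k+2})$ and $[\mathscr{A}_{k+1}\;\mathscr{B}_{k+1}]\xi=(0,\ldots,0,A\xi_{k+2})$, so the quadratic form of the padded $\mathit{\Omega}_{k+1}$ equals $\varepsilon\Vert A\xi_{k+2}\Vert^{2}\geq0$, and negative definiteness fails whenever $A\neq0$. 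So the "for all $k\geq k^{\ast}$" part cannot be obtained by naive diagonal padding.

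By contrast, the paper's converse is a self-contained construction that your plan would need to reproduce in some form: Lemma \ref{lm9} gives $\Vert\Delta_{\theta}^{k}\Vert<1$ for all $k\geq k^{\ast}$; the explicit choice $Q_{k}^{\ast}=W_{k}^{\mathrm{T}}W_{k}$ turns the frequency-domain quadratic form into $(\Delta_{\theta}^{k})^{\mathrm{H}}\Delta_{\theta}^{k}-I_{n}<0$; the YKP lemma (applicable since $A_{k}$ is nilpotent, hence Schur) then produces a symmetric $P_{k}^{\ast}$ with $\mathit{\Omega}_{k}(P_{k}^{\ast},Q_{k}^{\ast})<0$; positivity of $P_{k}^{\ast}$ is extracted from the $(1,1)$ block inequality $A_{k}^{\mathrm{T}}P_{k}^{\ast}A_{k}-P_{k}^{\ast}+C_{k}^{\mathrm{T}}C_{k}<0$; and the case of singular $B$ (where $Q_{k}^{\ast}$ is only positive semidefinite) is repaired by $Q_{k}^{\ast}+\varepsilon I_{kn}$ with $\varepsilon$ small. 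None of these ingredients -- an explicit certificate, the positivity of $P_{k}$, the singular-$B$ issue, or a quantitative handle on $k^{\ast}$ -- appears in your outline, and your Fej\'er--Riesz alternative is likewise only named, not executed. As written, the converse direction of Theorem \ref{th4} is missing.
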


\begin{proof}
Let
\begin{equation}
z_{k}=\left[
\begin{array}
[c]{c}%
z_{k,k}\\
\vdots \\
z_{k,2}\\
z_{k,1}%
\end{array}
\right]  =\left(  \mathrm{e}^{\mathrm{j}\theta}I_{kn}-A_{k}\right)  ^{-1}%
B_{k}, \label{eqzk}%
\end{equation}
which is equivalent to%
\[
\left[
\begin{array}
[c]{ccccc}%
\mathrm{e}^{\mathrm{j}\theta}I_{n} & -I_{n} & 0 & \cdots & 0\\
& \mathrm{e}^{\mathrm{j}\theta}I_{n} & \ddots & \ddots & \vdots \\
&  & \ddots & -I_{n} & 0\\
&  &  & \mathrm{e}^{\mathrm{j}\theta}I_{n} & -I_{n}\\
&  &  &  & \mathrm{e}^{\mathrm{j}\theta}I_{n}%
\end{array}
\right]  \left[
\begin{array}
[c]{c}%
z_{k,k}\\
z_{k,k-1}\\
\vdots \\
z_{k,2}\\
z_{k,1}%
\end{array}
\right]  =\left[
\begin{array}
[c]{c}%
0\\
0\\
\vdots \\
0\\
I_{n}%
\end{array}
\right]  .
\]
Solving this equation recursively from the bottom to the up gives
\begin{equation}
z_{k}=\left[
\begin{array}
[c]{c}%
\mathrm{e}^{-k\mathrm{j}\theta}I_{n}\\
\vdots \\
\mathrm{e}^{-2\mathrm{j}\theta}I_{n}\\
\mathrm{e}^{-\mathrm{j}\theta}I_{n}%
\end{array}
\right]  . \label{eqzk2}%
\end{equation}
With this we get from (\ref{eqab3}), (\ref{eqlk}) and (\ref{eqab4}) that%
\begin{align}
\left[
\begin{array}
[c]{cc}%
A_{k} & B_{k}%
\end{array}
\right]  \left[
\begin{array}
[c]{c}%
\left(  \mathrm{e}^{\mathrm{j}\theta}I_{kn}-A_{k}\right)  ^{-1}B_{k}\\
I_{n}%
\end{array}
\right]   &  =\left[
\begin{array}
[c]{cc}%
A_{k} & B_{k}%
\end{array}
\right]  \left[
\begin{array}
[c]{c}%
z_{k}\\
I_{n}%
\end{array}
\right]  =\left[
\begin{array}
[c]{c}%
\mathrm{e}^{-\mathrm{j}\left(  k-1\right)  \theta}I_{n}\\
\vdots \\
\mathrm{e}^{-\mathrm{j}\theta}I_{n}\\
I_{n}%
\end{array}
\right]  =\mathrm{e}^{\mathrm{j}\theta}z_{k},\label{eqabx1}\\
\left[
\begin{array}
[c]{cc}%
\mathscr{A}_{k} & \mathscr{B}_{k}%
\end{array}
\right]  \left[
\begin{array}
[c]{c}%
\left(  \mathrm{e}^{\mathrm{j}\theta}I_{kn}-A_{k}\right)  ^{-1}B_{k}\\
I_{n}%
\end{array}
\right]   &  =\left[
\begin{array}
[c]{cc}%
\mathscr{A}_{k} & \mathscr{B}_{k}%
\end{array}
\right]  \left[
\begin{array}
[c]{c}%
z_{k}\\
I_{n}%
\end{array}
\right]  =\left[
\begin{array}
[c]{c}%
\mathrm{e}^{-\mathrm{j}\left(  k-1\right)  \theta}\Delta_{\theta}\\
\vdots \\
\mathrm{e}^{-\mathrm{j}\theta}\Delta_{\theta}\\
\Delta_{\theta}%
\end{array}
\right]  =\mathrm{e}^{\mathrm{j}\theta}z_{k}\Delta_{\theta}, \label{eqabx2}%
\end{align}
and%
\begin{equation}
L_{k}\left[
\begin{array}
[c]{c}%
\left(  \mathrm{e}^{\mathrm{j}\theta}I_{kn}-A_{k}\right)  ^{-1}B_{k}\\
I_{n}%
\end{array}
\right]  =\left(  \mathrm{e}^{\mathrm{j}\theta}I_{kn}-A_{k}\right)  ^{-1}%
B_{k}=z_{k}. \label{eqabx3}%
\end{equation}
Therefore, we can obtain%
\begin{align}
&  \left[
\begin{array}
[c]{c}%
z_{k}\\
I_{n}%
\end{array}
\right]  ^{\mathrm{H}}\mathit{\Omega}_{k}\left(  P_{k},Q_{k}\right)  \left[
\begin{array}
[c]{c}%
z_{k}\\
I_{n}%
\end{array}
\right] \nonumber \\
=  &  \left[
\begin{array}
[c]{c}%
z_{k}\\
I_{n}%
\end{array}
\right]  ^{\mathrm{H}}\left(  \left[
\begin{array}
[c]{cc}%
A_{k} & B_{k}%
\end{array}
\right]  ^{\mathrm{T}}P_{k}\left[
\begin{array}
[c]{cc}%
A_{k} & B_{k}%
\end{array}
\right]  -L_{k}^{\mathrm{T}}P_{k}L_{k}\right)  \left[
\begin{array}
[c]{c}%
z_{k}\\
I_{n}%
\end{array}
\right] \nonumber \\
&  +\left[
\begin{array}
[c]{c}%
z_{k}\\
I_{n}%
\end{array}
\right]  ^{\mathrm{H}}\left[
\begin{array}
[c]{cc}%
\mathscr{A}_{k} & \mathscr{B}_{k}%
\end{array}
\right]  ^{\mathrm{T}}Q_{k}\left[
\begin{array}
[c]{cc}%
\mathscr{A}_{k} & \mathscr{B}_{k}%
\end{array}
\right]  \left[
\begin{array}
[c]{c}%
z_{k}\\
I_{n}%
\end{array}
\right] \nonumber \\
&  -\left[
\begin{array}
[c]{c}%
z_{k}\\
I_{n}%
\end{array}
\right]  ^{\mathrm{H}}\left[
\begin{array}
[c]{cc}%
A_{k} & B_{k}%
\end{array}
\right]  ^{\mathrm{T}}Q_{k}\left[
\begin{array}
[c]{cc}%
A_{k} & B_{k}%
\end{array}
\right]  \left[
\begin{array}
[c]{c}%
z_{k}\\
I_{n}%
\end{array}
\right] \nonumber \\
=  &  \left(  \mathrm{e}^{\mathrm{j}\theta}z_{k}\Delta_{\theta}\right)
^{\mathrm{H}}Q_{k}\mathrm{e}^{\mathrm{j}\theta}z_{k}\Delta_{\theta}-\left(
\mathrm{e}^{\mathrm{j}\theta}z_{k}\right)  ^{\mathrm{H}}Q_{k}\mathrm{e}%
^{\mathrm{j}\theta}z_{k}+\left(  \mathrm{e}^{\mathrm{j}\theta}z_{k}\right)
^{\mathrm{H}}P_{k}\mathrm{e}^{\mathrm{j}\theta}z_{k}-z_{k}^{\mathrm{H}}%
P_{k}z_{k}\nonumber \\
=  &  \Delta_{\theta}^{\mathrm{H}}z_{k}^{\mathrm{H}}Q_{k}z_{k}\Delta_{\theta
}-z_{k}^{\mathrm{H}}Q_{k}z_{k}\nonumber \\
<  &  0, \label{eq78}%
\end{align}
which implies (\ref{eqr}) since $z_{k}^{\mathrm{H}}Q_{k}z_{k}>0.$

We next prove the converse. By Lemma \ref{lm9} in Appendix A2, we know that
there exists a $k^{\ast}\geq1$ such that%
\begin{equation}
\left(  \Delta_{\theta}^{k}\right)  ^{\mathrm{H}}\Delta_{\theta}^{k}<I_{n},\;
\forall \theta \in \left[  0,2\pi \right]  ,\;k\geq k^{\ast}. \label{eq68}%
\end{equation}
Denote $Q_{k}^{\ast}=W_{k}^{\mathrm{T}}W_{k}$ where (see the notation in
Appendix A2)%
\begin{equation}
W_{k}=\left[
\begin{array}
[c]{ccccc}%
B^{\left[  k-1\right]  } & B^{\left[  k-2\right]  }A^{\left[  1\right]  } &
\cdots & B^{\left[  1\right]  }A^{\left[  k-2\right]  } & A^{\left[
k-1\right]  }\\
& B^{\left[  k-2\right]  } & B^{\left[  k-3\right]  }A^{\left[  1\right]  } &
\ddots & A^{\left[  k-2\right]  }\\
&  & \ddots & \ddots & \vdots \\
&  &  & B^{\left[  1\right]  } & A^{\left[  1\right]  }\\
&  &  &  & I_{n}%
\end{array}
\right]  . \label{eqwk}%
\end{equation}
It follows that $Q_{k}^{\ast}\geq0$ and, moreover, $Q_{k}^{\ast}>0$ if $B$ is
nonsingular. For any integer $i\geq1,$ by the binomial expansion theorem, we
have%
\begin{align*}
\left(  A+B\mathrm{e}^{-\mathrm{j}\theta}\right)  ^{i}  &  =A^{\left[
i\right]  }+B^{\left[  1\right]  }A^{\left[  i-1\right]  }\mathrm{e}%
^{-\mathrm{j}\theta}+B^{\left[  2\right]  }A^{\left[  i-2\right]  }%
\mathrm{e}^{-2\mathrm{j}\theta}+\cdots+B^{\left[  i-1\right]  }A^{\left[
1\right]  }\mathrm{e}^{-\left(  i-1\right)  \mathrm{j}\theta}+B^{\left[
i\right]  }\mathrm{e}^{-i\mathrm{j}\theta}\\
&  =\left[
\begin{array}
[c]{ccccc}%
B^{\left[  i\right]  } & B^{\left[  i-1\right]  }A^{\left[  1\right]  } &
\cdots & B^{\left[  1\right]  }A^{\left[  i-1\right]  } & A^{\left[  i\right]
}%
\end{array}
\right]  \left[
\begin{array}
[c]{c}%
\mathrm{e}^{-i\mathrm{j}\theta}I_{n}\\
\mathrm{e}^{-\left(  i-1\right)  \mathrm{j}\theta}I_{n}\\
\vdots \\
\mathrm{e}^{-\mathrm{j}\theta}I_{n}\\
I_{n}%
\end{array}
\right]  .
\end{align*}
It follows that%
\[
W_{k}\mathrm{e}^{\mathrm{j}\theta}z_{k}=W_{k}\left[
\begin{array}
[c]{c}%
\mathrm{e}^{-\left(  k-1\right)  \mathrm{j}\theta}I_{n}\\
\mathrm{e}^{-\left(  k-2\right)  \mathrm{j}\theta}I_{n}\\
\vdots \\
\mathrm{e}^{-\mathrm{j}\theta}I_{n}\\
I_{n}%
\end{array}
\right]  =\left[
\begin{array}
[c]{c}%
\Delta_{\theta}^{k-1}\\
\Delta_{\theta}^{k-2}\\
\vdots \\
\Delta_{\theta}\\
I_{n}%
\end{array}
\right]  .
\]
Let%
\[
\mathit{\Theta}_{k}\left(  Q\right)  =\left[
\begin{array}
[c]{cc}%
\mathscr{A}_{k} & \mathscr{B}_{k}%
\end{array}
\right]  ^{\mathrm{T}}Q\left[
\begin{array}
[c]{cc}%
\mathscr{A}_{k} & \mathscr{B}_{k}%
\end{array}
\right]  -\left[
\begin{array}
[c]{cc}%
A_{k} & B_{k}%
\end{array}
\right]  ^{\mathrm{T}}Q\left[
\begin{array}
[c]{cc}%
A_{k} & B_{k}%
\end{array}
\right]  .
\]
We then have from (\ref{eq68}) and equations (\ref{eqabx1}) and (\ref{eqabx2})
that%
\begin{align*}
&  \left[
\begin{array}
[c]{c}%
z_{k}\\
I_{n}%
\end{array}
\right]  ^{\mathrm{H}}\mathit{\Theta}_{k}\left(  Q_{k}^{\ast}\right)  \left[
\begin{array}
[c]{c}%
z_{k}\\
I_{n}%
\end{array}
\right] \\
=  &  \left(  \mathrm{e}^{\mathrm{j}\theta}z_{k}\Delta_{\theta}\right)
^{\mathrm{H}}Q_{k}^{\ast}\mathrm{e}^{\mathrm{j}\theta}z_{k}\Delta_{\theta
}-\left(  \mathrm{e}^{\mathrm{j}\theta}z_{k}\right)  ^{\mathrm{H}}Q_{k}^{\ast
}\mathrm{e}^{\mathrm{j}\theta}z_{k}\\
=  &  \Delta_{\theta}^{\mathrm{H}}\left(  \left(  W_{k}\mathrm{e}%
^{\mathrm{j}\theta}z_{k}\right)  ^{\mathrm{H}}W_{k}\mathrm{e}^{\mathrm{j}%
\theta}z_{k}\right)  \Delta_{\theta}-\left(  W_{k}\mathrm{e}^{\mathrm{j}%
\theta}z_{k}\right)  ^{\mathrm{H}}W_{k}\mathrm{e}^{\mathrm{j}\theta}z_{k}\\
=  &  \Delta_{\theta}^{\mathrm{H}}\left[
\begin{array}
[c]{c}%
\Delta_{\theta}^{k-1}\\
\vdots \\
\Delta_{\theta}\\
I_{n}%
\end{array}
\right]  ^{\mathrm{H}}\left[
\begin{array}
[c]{c}%
\Delta_{\theta}^{k-1}\\
\vdots \\
\Delta_{\theta}\\
I_{n}%
\end{array}
\right]  \Delta_{\theta}-\left[
\begin{array}
[c]{c}%
\Delta_{\theta}^{k-1}\\
\vdots \\
\Delta_{\theta}\\
I_{n}%
\end{array}
\right]  ^{\mathrm{H}}\left[
\begin{array}
[c]{c}%
\Delta_{\theta}^{k-1}\\
\vdots \\
\Delta_{\theta}\\
I_{n}%
\end{array}
\right] \\
=  &  \left(  \Delta_{\theta}^{k}\right)  ^{\mathrm{H}}\Delta_{\theta}%
^{k}-I_{n}\\
<  &  0.
\end{align*}
As $A_{k}$ is Schur stable, by the YKP lemma in Appendix A2, the above
inequality holds true if and only if there exists a symmetric matrix
$P_{k}^{\ast}\in \mathbf{R}^{kn\times kn}$ such that%
\begin{align}
0>  &  \left[
\begin{array}
[c]{cc}%
A_{k}^{\mathrm{T}}P_{k}^{\ast}A_{k}-P_{k}^{\ast} & A_{k}^{\mathrm{T}}%
P_{k}^{\ast}B_{k}\\
B_{k}^{\mathrm{T}}P_{k}^{\ast}A_{k} & B_{k}^{\mathrm{T}}P_{k}^{\ast}B_{k}%
\end{array}
\right]  +\mathit{\Theta}_{k}\left(  Q_{k}^{\ast}\right) \nonumber \\
=  &  \left[
\begin{array}
[c]{cc}%
A_{k} & B_{k}%
\end{array}
\right]  ^{\mathrm{T}}P_{k}^{\ast}\left[
\begin{array}
[c]{cc}%
A_{k} & B_{k}%
\end{array}
\right]  -L_{k}^{\mathrm{T}}P_{k}^{\ast}L_{k}\nonumber \\
&  +\left[
\begin{array}
[c]{cc}%
\mathscr{A}_{k} & \mathscr{B}_{k}%
\end{array}
\right]  ^{\mathrm{T}}Q_{k}^{\ast}\left[
\begin{array}
[c]{cc}%
\mathscr{A}_{k} & \mathscr{B}_{k}%
\end{array}
\right]  -\left[
\begin{array}
[c]{cc}%
A_{k} & B_{k}%
\end{array}
\right]  ^{\mathrm{T}}Q_{k}^{\ast}\left[
\begin{array}
[c]{cc}%
A_{k} & B_{k}%
\end{array}
\right] \nonumber \\
=  &  \mathit{\Omega}_{k}\left(  P_{k}^{\ast},Q_{k}^{\ast}\right)  .
\label{eq77c}%
\end{align}
By comparing (\ref{eq77c}) with (\ref{eq77b}), we know that the LMI in
(\ref{eq55}) is feasible with $\left(  P_{k},Q_{k}\right)  =\left(
P_{k}^{\ast},Q_{k}^{\ast}\right)  .$ In the following, we will show that
$P_{k}^{\ast}>0$.

Straightforward computation gives that%
\[
W_{k}\left[
\begin{array}
[c]{cc}%
\mathscr{A}_{k} & \mathscr{B}_{k}%
\end{array}
\right]  =\left[
\begin{array}
[c]{ccccc}%
B^{\left[  k\right]  } & B^{\left[  k-1\right]  }A^{\left[  1\right]  } &
\cdots & B^{\left[  1\right]  }A^{\left[  k-1\right]  } & A^{\left[  k\right]
}\\
& B^{\left[  k-1\right]  } & B^{\left[  k-2\right]  }A^{\left[  1\right]  } &
\ddots & A^{\left[  k-1\right]  }\\
&  & \ddots & \ddots & \vdots \\
&  & B^{\left[  2\right]  } & B^{\left[  1\right]  }A^{\left[  1\right]  } &
A^{\left[  2\right]  }\\
&  &  & B^{\left[  1\right]  } & A^{\left[  1\right]  }%
\end{array}
\right]  ,
\]
and%
\[
W_{k}\left[
\begin{array}
[c]{cc}%
A_{k} & B_{k}%
\end{array}
\right]  =\left[
\begin{array}
[c]{cccccc}%
0 & B^{\left[  k-1\right]  } & B^{\left[  k-2\right]  }A^{\left[  1\right]  }
& \cdots & B^{\left[  1\right]  }A^{\left[  k-2\right]  } & A^{\left[
k-1\right]  }\\
& 0 & B^{\left[  k-2\right]  } & B^{\left[  k-3\right]  }A^{\left[  1\right]
} & \ddots & A^{\left[  k-2\right]  }\\
&  & 0 & \ddots & \ddots & \vdots \\
&  &  & \ddots & B^{\left[  1\right]  } & A^{\left[  1\right]  }\\
&  &  &  & 0 & I_{n}%
\end{array}
\right]  .
\]
It follows that we can write%
\begin{align*}
W_{k}A_{k}  &  =\left[
\begin{array}
[c]{cc}%
0_{\left(  k-1\right)  n\times n} & U_{k}\\
0_{n\times n} & 0_{n\times \left(  k-1\right)  n}%
\end{array}
\right]  ,W_{k}B_{k}=\left[
\begin{array}
[c]{c}%
A^{\left[  k-1\right]  }\\
\vdots \\
A^{\left[  1\right]  }\\
I_{n}%
\end{array}
\right]  ,\\
W_{k}\mathscr{A}_{k}  &  =\left[
\begin{array}
[c]{cc}%
B^{\left[  k\right]  } & V_{k}\\
0_{\left(  k-1\right)  n\times n} & U_{k}%
\end{array}
\right]  ,W_{k}\mathscr{B}_{k}=\left[
\begin{array}
[c]{c}%
A^{\left[  k\right]  }\\
\vdots \\
A^{\left[  2\right]  }\\
A^{\left[  1\right]  }%
\end{array}
\right]  ,
\end{align*}
where%
\begin{align*}
U_{k}  &  =\left[
\begin{array}
[c]{cccc}%
B^{\left[  k-1\right]  } & B^{\left[  k-2\right]  }A^{\left[  1\right]  } &
\cdots & B^{\left[  1\right]  }A^{\left[  k-2\right]  }\\
& \ddots & \ddots & \vdots \\
&  & B^{\left[  2\right]  } & B^{\left[  1\right]  }A^{\left[  1\right]  }\\
&  &  & B^{\left[  1\right]  }%
\end{array}
\right]  ,\\
V_{k}  &  =\left[
\begin{array}
[c]{cccc}%
B^{\left[  k-1\right]  }A^{\left[  1\right]  } & B^{\left[  k-2\right]
}A^{\left[  2\right]  } & \cdots & B^{\left[  1\right]  }A^{\left[
k-1\right]  }%
\end{array}
\right]  .
\end{align*}
We also denote%
\begin{align*}
C_{k}  &  =\left[
\begin{array}
[c]{cc}%
B^{\left[  k\right]  } & V_{k}%
\end{array}
\right] \\
&  =\left[
\begin{array}
[c]{cccc}%
B^{\left[  k\right]  } & B^{\left[  k-1\right]  }A^{\left[  1\right]  } &
\cdots & B^{\left[  1\right]  }A^{\left[  k-1\right]  }%
\end{array}
\right]  \in \mathbf{R}^{n\times kn},\; \\
D_{k}  &  =A^{\left[  k\right]  }\in \mathbf{R}^{n\times n}.
\end{align*}
Then, by straightforward computations, we obtain%
\begin{align*}
\mathscr{A}_{k}^{\mathrm{T}}W_{k}^{\mathrm{T}}W_{k}\mathscr{A}_{k}%
-A_{k}^{\mathrm{T}}W_{k}^{\mathrm{T}}W_{k}A_{k}  &  =\left[
\begin{array}
[c]{cc}%
\left(  B^{\left[  k\right]  }\right)  ^{\mathrm{T}}B^{\left[  k\right]  } &
\left(  B^{\left[  k\right]  }\right)  ^{\mathrm{T}}V_{k}\\
V_{k}^{\mathrm{T}}B^{\left[  k\right]  } & V_{k}^{\mathrm{T}}V_{k}%
+U_{k}^{\mathrm{T}}U_{k}%
\end{array}
\right]  -\left[
\begin{array}
[c]{cc}%
0_{n\times n} & 0_{n\times \left(  k-1\right)  n}\\
0_{\left(  k-1\right)  n\times n} & U_{k}^{\mathrm{T}}U_{k}%
\end{array}
\right] \\
&  =\left[
\begin{array}
[c]{cc}%
\left(  B^{\left[  k\right]  }\right)  ^{\mathrm{T}}B^{\left[  k\right]  } &
\left(  B^{\left[  k\right]  }\right)  ^{\mathrm{T}}V_{k}\\
V_{k}^{\mathrm{T}}B^{\left[  k\right]  } & V_{k}^{\mathrm{T}}V_{k}%
\end{array}
\right] \\
&  =C_{k}^{\mathrm{T}}C_{k}.
\end{align*}
Similarly, we have%
\begin{align*}
\mathscr{A}_{k}^{\mathrm{T}}W_{k}^{\mathrm{T}}W_{k}\mathscr{B}_{k}%
-A_{k}^{\mathrm{T}}W_{k}^{\mathrm{T}}W_{k}B_{k}  &  =\left[
\begin{array}
[c]{c}%
\left(  B^{\left[  k\right]  }\right)  ^{\mathrm{T}}A^{\left[  k\right]  }\\
V_{k}^{\mathrm{T}}A^{\left[  k\right]  }+U_{k}^{\mathrm{T}}\left[
\begin{array}
[c]{c}%
A^{\left[  k-1\right]  }\\
\vdots \\
A^{\left[  1\right]  }%
\end{array}
\right]
\end{array}
\right]  -\left[
\begin{array}
[c]{c}%
0_{n\times n}\\
U_{k}^{\mathrm{T}}\left[
\begin{array}
[c]{c}%
A^{\left[  k-1\right]  }\\
\vdots \\
A^{\left[  1\right]  }%
\end{array}
\right]
\end{array}
\right] \\
&  =\left[
\begin{array}
[c]{c}%
\left(  B^{\left[  k\right]  }\right)  ^{\mathrm{T}}A^{\left[  k\right]  }\\
V_{k}^{\mathrm{T}}A^{\left[  k\right]  }%
\end{array}
\right] \\
&  =C_{k}^{\mathrm{T}}D_{k},
\end{align*}
and%
\[
\mathscr{B}_{k}^{\mathrm{T}}W_{k}^{\mathrm{T}}W_{k}\mathscr{B}_{k}%
-B_{k}^{\mathrm{T}}W_{k}^{\mathrm{T}}W_{k}B_{k}=\left(  A^{\left[  k\right]
}\right)  ^{\mathrm{T}}A^{\left[  k\right]  }-I_{n}=D_{k}^{\mathrm{T}}%
D_{k}-I_{n}.
\]
Therefore, we can get%
\begin{equation}
\mathit{\Omega}_{k}\left(  P_{k}^{\ast},Q_{k}^{\ast}\right)  =\left[
\begin{array}
[c]{cc}%
A_{k}^{\mathrm{T}}P_{k}^{\ast}A_{k}-P_{k}^{\ast}+C_{k}^{\mathrm{T}}C_{k} &
A_{k}^{\mathrm{T}}P_{k}B_{k}+C_{k}^{\mathrm{T}}D_{k}\\
B_{k}^{\mathrm{T}}P_{k}^{\ast}A_{k}+D_{k}^{\mathrm{T}}C_{k} & B_{k}%
^{\mathrm{T}}P_{k}^{\ast}B_{k}+D_{k}^{\mathrm{T}}D_{k}-I_{n}%
\end{array}
\right]  , \label{eq887}%
\end{equation}
which, together with (\ref{eq77c}), implies that
\[
A_{k}^{\mathrm{T}}P_{k}^{\ast}A_{k}-P_{k}^{\ast}+C_{k}^{\mathrm{T}}C_{k}<0.
\]
As $A_{k}$ is Schur stable, the above equation implies $P_{k}^{\ast}>0.$

By now we have shown that, if $B$ is nonsingular, the LMI in (\ref{eq55}) is
solvable with positive definite matrices $P_{k}^{\ast}$ and $Q_{k}^{\ast
}=W_{k}^{\mathrm{T}}W_{k}.$ However, if $B$ is singular, the matrix
$Q_{k}^{\ast}=W_{k}^{\mathrm{T}}W_{k}$ is only semi-positive definite. In the
following, we will show that the LMI in (\ref{eq55}) is also feasible with
$\left(  P_{k},Q_{k}\right)  =\left(  P_{k}^{\ast},Q_{k}^{\ast}+\varepsilon
I_{kn}\right)  $ where $\varepsilon>0$ is sufficiently small,$\ $namely,%
\begin{equation}
\mathit{\Omega}_{k}\left(  P_{k}^{\ast},Q_{k}^{\ast}+\varepsilon
I_{kn}\right)  <0. \label{eq774}%
\end{equation}

In fact, it follows from (\ref{eq77b}) that%
\begin{align*}
\mathit{\Omega}_{k}\left(  P_{k}^{\ast},Q_{k}^{\ast}+\varepsilon
I_{kn}\right)   &  =\mathit{\Omega}_{k}\left(  P_{k}^{\ast},Q_{k}^{\ast
}\right)  +\mathit{\Omega}_{k}\left(  0_{kn\times kn},\varepsilon
I_{kn}\right) \\
&  =\mathit{\Omega}_{k}\left(  P_{k}^{\ast},Q_{k}^{\ast}\right)
+\varepsilon \left(  \left[
\begin{array}
[c]{cc}%
\mathscr{A}_{k} & \mathscr{B}_{k}%
\end{array}
\right]  ^{\mathrm{T}}\left[
\begin{array}
[c]{cc}%
\mathscr{A}_{k} & \mathscr{B}_{k}%
\end{array}
\right]  -\left[
\begin{array}
[c]{cc}%
A_{k} & B_{k}%
\end{array}
\right]  ^{\mathrm{T}}\left[
\begin{array}
[c]{cc}%
A_{k} & B_{k}%
\end{array}
\right]  \right) \\
&  \leq \mathit{\Omega}_{k}\left(  P_{k}^{\ast},Q_{k}^{\ast}\right)
+\varepsilon \left(  \left \Vert \left[
\begin{array}
[c]{cc}%
\mathscr{A}_{k} & \mathscr{B}_{k}%
\end{array}
\right]  \right \Vert ^{2}+\left \Vert \left[
\begin{array}
[c]{cc}%
A_{k} & B_{k}%
\end{array}
\right]  \right \Vert ^{2}\right)  .
\end{align*}
Since $\mathit{\Omega}_{k}\left(  P_{k}^{\ast},Q_{k}^{\ast}\right)  $ is
independent of $\varepsilon$ and satisfies (\ref{eq77c}), there exists a
sufficiently small $\varepsilon>0$ such that (\ref{eq774}) is satisfied. The
proof is finished.
\end{proof}

By a Schur complement, the LMI (\ref{eq55}) can be written as%
\[
\left[
\begin{array}
[c]{cc}%
\mathit{\Omega}_{k1}\left(  P_{k},Q_{k}\right)  & [\mathscr{A}_{k}%
,\mathscr{B}_{k}]^{\mathrm{T}}Q_{k}\\
Q_{k}[\mathscr{A}_{k},\mathscr{B}_{k}] & -Q_{k}%
\end{array}
\right]  <0,
\]
whose left hand side is a linear function of $\left(  A,B\right)  .$ Thus, the
most important feature of Theorem \ref{th4}, when compared with the results in
\cite{bliman02mssp} (see Lemma \ref{th2}), the result in \cite{eih06tac} (see
Lemma \ref{th1}) and the method in \cite{hv12auto}, is that the coefficient
$\left(  A,B\right)  $ appears as a linear function. Such a property is
helpful for solving the robust stability analysis problem, as made clear below.

Consider the perturbed system of (\ref{sys})
\begin{equation}
x\left(  t\right)  =\left(  A+\Delta A\right)  x\left(  t-a\right)  +\left(
B+\Delta B\right)  x\left(  t-b\right)  , \label{sys2}%
\end{equation}
where $A\in \mathbf{R}^{n\times n}$ and $B\in \mathbf{R}^{n\times n}$ are the
same as that in (\ref{sys}) and
\begin{equation}
\left[
\begin{array}
[c]{cc}%
\Delta B & \Delta A
\end{array}
\right]  =E_{0}F\left[
\begin{array}
[c]{cc}%
B_{0} & A_{0}%
\end{array}
\right]  , \label{eq884}%
\end{equation}
where $E_{0}\in \mathbf{R}^{n\times p},B_{0}\in \mathbf{R}^{q\times n},A_{0}%
\in \mathbf{R}^{q\times n}$ are known matrices, and $F\in \mathbf{R}^{p\times
q}$ denotes the norm bounded uncertainty (which can be time-varying) that
satisfies%
\begin{equation}
F^{\mathrm{T}}F\leq I_{q}. \label{eqf}%
\end{equation}
For further using, we denote
\[
\left[
\begin{array}
[c]{cc}%
\mathscr{A}_{k0} & \mathscr{B}_{k0}%
\end{array}
\right]  =\left[
\begin{array}
[c]{ccccc}%
B_{0} & A_{0} & 0 & \cdots & 0\\
& \ddots & \ddots & \ddots & \vdots \\
&  & B_{0} & A_{0} & 0\\
&  &  & B_{0} & A_{0}%
\end{array}
\right]  \in \mathbf{R}^{kq\times \left(  k+1\right)  n}.
\]

\begin{theorem}
\label{th5}The uncertain linear difference equation (\ref{sys2}) is
exponentially stable for any $F\in \mathbf{R}^{p\times q}$ satisfying
(\ref{eqf}) if there exists an integer $k\geq1,$ positive definite matrices
$P_{k},Q_{k}\in \mathbf{R}^{kn\times kn}$ and a positive definite matrix
$S_{k}\in \mathbf{R}^{k\times k}$ such that the following LMI is satisfied:%
\begin{equation}
\left[
\begin{array}
[c]{cc}%
\mathit{\Omega}_{k}\left(  P_{k},Q_{k}\right)  +\left[
\begin{array}
[c]{cc}%
\mathscr{A}_{k0} & \mathscr{B}_{k0}%
\end{array}
\right]  ^{\mathrm{T}}\left(  S_{k}\otimes I_{q}\right)  \left[
\begin{array}
[c]{cc}%
\mathscr{A}_{k0} & \mathscr{B}_{k0}%
\end{array}
\right]  & \left[
\begin{array}
[c]{cc}%
\mathscr{A}_{k0} & \mathscr{B}_{k0}%
\end{array}
\right]  ^{\mathrm{T}}Q_{k}\left(  I_{k}\otimes E_{0}\right) \\
\left(  I_{k}\otimes E_{0}^{\mathrm{T}}\right)  Q_{k}\left[
\begin{array}
[c]{cc}%
\mathscr{A}_{k} & \mathscr{B}_{k}%
\end{array}
\right]  & \left(  I_{k}\otimes E_{0}^{\mathrm{T}}\right)  Q_{k}\left(
I_{k}\otimes E_{0}\right)  -S_{k}\otimes I_{p}%
\end{array}
\right]  <0. \label{eq88}%
\end{equation}

\end{theorem}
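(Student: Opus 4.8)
The plan is to combine the linear-fractional structure of the perturbation with a \emph{structured} S-procedure, so that robust feasibility reduces to the nominal criterion of Theorem~\ref{th4} evaluated at the perturbed coefficients. First I would note that $\mathit{\Omega}_{k1}(P,Q)$ is built only from the $(A,B)$-independent matrices $A_k,B_k,L_k$, so the sole place where $(A,B)$ enters $\mathit{\Omega}_k(P,Q)$ is the factor $[\mathscr{A}_k,\mathscr{B}_k]$. Replacing $(A,B)$ by $(A+\Delta A,B+\Delta B)$ in (\ref{eqab4}) and using (\ref{eq884}) (so $\Delta B=E_0FB_0$, $\Delta A=E_0FA_0$), inspection of the block-bidiagonal pattern gives
\[
\left.[\mathscr{A}_k,\mathscr{B}_k]\right|_{A+\Delta A,\,B+\Delta B}=[\mathscr{A}_k,\mathscr{B}_k]+(I_k\otimes E_0)(I_k\otimes F)[\mathscr{A}_{k0},\mathscr{B}_{k0}],
\]
since the $i$-th block row of $(I_k\otimes E_0)(I_k\otimes F)[\mathscr{A}_{k0},\mathscr{B}_{k0}]$ is $[\cdots 0\ E_0FB_0\ E_0FA_0\ 0\cdots]=[\cdots 0\ \Delta B\ \Delta A\ 0\cdots]$. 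Writing $\mathcal M=[\mathscr{A}_k,\mathscr{B}_k]$, $\mathcal N=[\mathscr{A}_{k0},\mathscr{B}_{k0}]$, $\mathcal G=I_k\otimes E_0$ and $\mathcal F=I_k\otimes F$, the matrix $\mathit{\Omega}_k(P_k,Q_k)$ evaluated at the perturbed coefficients equals
\[
\mathit{\Omega}_k(P_k,Q_k)+\mathcal M^{\mathrm T}Q_k\mathcal G\mathcal F\mathcal N+(\mathcal G\mathcal F\mathcal N)^{\mathrm T}Q_k\mathcal M+(\mathcal G\mathcal F\mathcal N)^{\mathrm T}Q_k\mathcal G\mathcal F\mathcal N,
\]
with the first term being the nominal one.

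Next, for an arbitrary nonzero vector $v$ I would set $w=\mathcal F\mathcal Nv$, so that the quadratic form of the matrix above equals $\zeta^{\mathrm T}\Phi\,\zeta$ with $\zeta=[v^{\mathrm T}\ w^{\mathrm T}]^{\mathrm T}$ and
\[
\Phi=\begin{bmatrix}\mathit{\Omega}_k(P_k,Q_k)&\mathcal M^{\mathrm T}Q_k\mathcal G\\\mathcal G^{\mathrm T}Q_k\mathcal M&\mathcal G^{\mathrm T}Q_k\mathcal G\end{bmatrix}.
\]
The uncertainty constraint (\ref{eqf}) is captured by $\zeta^{\mathrm T}\Psi\zeta\le 0$, where
\[
\Psi=\begin{bmatrix}-\mathcal N^{\mathrm T}(S_k\otimes I_q)\mathcal N&0\\0&S_k\otimes I_p\end{bmatrix},
\]
which holds because $w=(I_k\otimes F)\mathcal Nv$ and, using $S_k>0$ together with $F^{\mathrm T}F\le I_q$, $(I_k\otimes F^{\mathrm T})(S_k\otimes I_p)(I_k\otimes F)=S_k\otimes(F^{\mathrm T}F)\le S_k\otimes I_q$; here the Kronecker structure of $\mathcal F$ is what lets the scaling commute appropriately, and a single scalar multiplier would be much more conservative. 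Consequently, whenever $\Phi-\Psi<0$ one has $\zeta^{\mathrm T}\Phi\zeta=\zeta^{\mathrm T}(\Phi-\Psi)\zeta+\zeta^{\mathrm T}\Psi\zeta<0$, since $\zeta\neq 0$ (its top block is $v\neq 0$) and $\zeta^{\mathrm T}\Psi\zeta\le 0$. Substituting the definitions of $\mathcal M,\mathcal N,\mathcal G$, the inequality $\Phi-\Psi<0$ is exactly the LMI (\ref{eq88}) (its two off-diagonal blocks being mutual transposes).

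Putting these together, (\ref{eq88}) forces $\mathit{\Omega}_k(P_k,Q_k)<0$ to hold with $(A,B)$ replaced by $(A+\Delta A,B+\Delta B)$, and with the \emph{same} pair $(P_k,Q_k)$ for every $F$ satisfying (\ref{eqf}); exponential stability of (\ref{sys2}) then follows from Theorem~\ref{th4}, while for a time-varying $F$ one appeals instead to the Lyapunov--Krasovskii reading of (\ref{eq55}) in Section~\ref{sec3}, for which the common certificate $(P_k,Q_k)$ yields a single functional decreasing along all admissible solutions. I expect the only delicate step to be the middle one: placing the scalings $S_k\otimes I_q$ and $S_k\otimes I_p$ on the correct sides so that the block structure $\mathcal F=I_k\otimes F$ is respected, and then matching the resulting matrix inequality with (\ref{eq88}) block by block. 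The linear-fractional identity is a routine bookkeeping check, and the passage from strong to (robust) exponential stability is straightforward given Section~\ref{sec3}.
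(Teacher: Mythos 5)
Your proof is correct, and its essential content coincides with the paper's: the structured factorization $\Delta\mathscr{C}_{k}=(I_{k}\otimes E_{0})(I_{k}\otimes F)[\mathscr{A}_{k0},\mathscr{B}_{k0}]$ of the perturbed coefficient block, the Kronecker scaling bound $(I_{k}\otimes F^{\mathrm{T}})(S_{k}\otimes I_{p})(I_{k}\otimes F)=S_{k}\otimes F^{\mathrm{T}}F\leq S_{k}\otimes I_{q}$, and the reduction to Theorem \ref{th4} applied with the same $(P_{k},Q_{k})$ to the perturbed coefficients. The difference is only in how $F$ is eliminated: the paper works at the matrix level, unfolding the LMI (\ref{eq88}) through a chain of Schur complements and a congruence, absorbing the uncertain cross terms with the completion-of-squares inequality of Lemma \ref{lm1} (which brings in $Q_{k}^{-1}$ and $S_{k}^{-1}$), and then Schur-complementing back to obtain $\mathit{\Omega}_{k1}+(\mathscr{C}_{k}+\Delta\mathscr{C}_{k})^{\mathrm{T}}Q_{k}(\mathscr{C}_{k}+\Delta\mathscr{C}_{k})<0$; you instead run a vector-level S-procedure with the auxiliary variable $w=(I_{k}\otimes F)[\mathscr{A}_{k0},\mathscr{B}_{k0}]v$, showing directly that $\Phi-\Psi<0$ forces the perturbed quadratic form to be negative for every admissible $F$. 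The two arguments are equivalent in strength, but yours is somewhat more economical (no inverses, no Schur-complement chain) and makes explicit that the only conservatism is the choice of multiplier class $S_{k}\otimes I$. Two small remarks. First, you tacitly read the $(1,2)$ block of (\ref{eq88}) as $[\mathscr{A}_{k},\mathscr{B}_{k}]^{\mathrm{T}}Q_{k}(I_{k}\otimes E_{0})$, i.e.\ the transpose of the $(2,1)$ block; this is indeed what is intended (the displayed $\mathscr{A}_{k0},\mathscr{B}_{k0}$ there is a typo, as a dimension count and the paper's own proof confirm), so your block matching is fine. Second, your closing aside about time-varying $F$ is not justified by the argument as it stands: in Proposition \ref{pp1} the identity expressing $X_{k}(t)$ through $[\mathscr{A}_{k},\mathscr{B}_{k}]$ uses the difference equation at $k$ distinct time instants, so under time variation the block rows of the perturbed $\mathscr{C}_{k}$ would carry different values of $F$ and the $I_{k}\otimes F$ structure your scaling exploits is lost; this does not affect the theorem as stated (fixed $F$), and the paper's proof has the same limitation, but the aside should not be presented as a proof of the time-varying case.
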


\begin{proof}
For notation simplicity, we denote
\begin{equation}
\mathscr{C}_{k}=\left[
\begin{array}
[c]{cc}%
\mathscr{A}_{k} & \mathscr{B}_{k}%
\end{array}
\right]  ,\; \mathscr{C}_{k0}=\left[
\begin{array}
[c]{cc}%
\mathscr{A}_{k0} & \mathscr{B}_{k0}%
\end{array}
\right]  ,\nonumber
\end{equation}
$\mathit{\Omega}_{k}=\mathit{\Omega}_{k}\left(  P_{k},Q_{k}\right)  ,$ and
$\mathit{\Omega}_{k1}=\mathit{\Omega}_{k1}\left(  P_{k},Q_{k}\right)  .$
Notice that we can write%
\begin{align*}
0  &  >\left[
\begin{array}
[c]{cc}%
\mathit{\Omega}_{k}+\mathscr{C}_{k0}^{\mathrm{T}}\left(  S_{k}\otimes
I_{q}\right)  \mathscr{C}_{k0} & \mathscr{C}_{k}^{\mathrm{T}}Q_{k}\left(
I_{k}\otimes E_{0}\right) \\
\left(  I_{k}\otimes E_{0}^{\mathrm{T}}\right)  Q_{k}\mathscr{C}_{k} & \left(
I_{k}\otimes E_{0}^{\mathrm{T}}\right)  Q_{k}\left(  I_{k}\otimes
E_{0}\right)  -S_{k}\otimes I_{p}%
\end{array}
\right] \\
&  =\left[
\begin{array}
[c]{cc}%
\mathit{\Omega}_{k1}+\mathscr{C}_{k}^{\mathrm{T}}Q_{k}\mathscr{C}_{k}%
+\mathscr{C}_{k0}^{\mathrm{T}}\left(  S_{k}\otimes I_{q}\right)
\mathscr{C}_{k0} & \mathscr{C}_{k}^{\mathrm{T}}Q_{k}\left(  I_{k}\otimes
E_{0}\right) \\
\left(  I_{k}\otimes E_{0}^{\mathrm{T}}\right)  Q_{k}\mathscr{C}_{k} & \left(
I_{k}\otimes E_{0}^{\mathrm{T}}\right)  Q_{k}\left(  I_{k}\otimes
E_{0}\right)  -S_{k}\otimes I_{p}%
\end{array}
\right] \\
&  =\left[
\begin{array}
[c]{cc}%
\mathit{\Omega}_{k1}+\mathscr{C}_{k0}^{\mathrm{T}}\left(  S_{k}\otimes
I_{q}\right)  \mathscr{C}_{k0} & 0_{\left(  k+1\right)  n\times kp}\\
0_{kp\times \left(  k+1\right)  n} & -S_{k}\otimes I_{p}%
\end{array}
\right]  +\left[
\begin{array}
[c]{c}%
\mathscr{C}_{k}^{\mathrm{T}}Q_{k}\\
\left(  I_{k}\otimes E_{0}^{\mathrm{T}}\right)  Q_{k}%
\end{array}
\right]  Q_{k}^{-1}\left[
\begin{array}
[c]{c}%
\mathscr{C}_{k}^{\mathrm{T}}Q_{k}\\
\left(  I_{k}\otimes E_{0}^{\mathrm{T}}\right)  Q_{k}%
\end{array}
\right]  ^{\mathrm{T}},
\end{align*}
which, by a Schur complement, is equivalent to%
\[
\left[
\begin{array}
[c]{ccc}%
\mathit{\Omega}_{k1}+\mathscr{C}_{k0}^{\mathrm{T}}\left(  S_{k}\otimes
I_{q}\right)  \mathscr{C}_{k0} & 0_{\left(  k+1\right)  n\times kp} &
\mathscr{C}_{k}^{\mathrm{T}}Q_{k}\\
0_{kp\times \left(  k+1\right)  n} & -S_{k}\otimes I_{p} & \left(  I_{k}\otimes
E_{0}^{\mathrm{T}}\right)  Q_{k}\\
Q_{k}\mathscr{C}_{k} & Q_{k}\left(  I_{k}\otimes E_{0}\right)  & -Q_{k}%
\end{array}
\right]  <0.
\]
By a congruence transformation, this is equivalent to%
\[
\left[
\begin{array}
[c]{ccc}%
\mathit{\Omega}_{k1}+\mathscr{C}_{k0}^{\mathrm{T}}\left(  S_{k}\otimes
I_{q}\right)  \mathscr{C}_{k0} & \mathscr{C}_{k}^{\mathrm{T}}Q_{k} &
0_{\left(  k+1\right)  n\times kp}\\
Q_{k}\mathscr{C}_{k} & -Q_{k} & Q_{k}\left(  I_{k}\otimes E_{0}\right) \\
0_{kp\times \left(  k+1\right)  n} & \left(  I_{k}\otimes E_{0}^{\mathrm{T}%
}\right)  Q_{k} & -S_{k}\otimes I_{p}%
\end{array}
\right]  <0.
\]
By a Schur complement, the above inequality holds true if and only if%
\begin{align}
0>  &  \left[
\begin{array}
[c]{cc}%
\mathit{\Omega}_{k1} & \mathscr{C}_{k}^{\mathrm{T}}Q_{k}\\
Q_{k}\mathscr{C}_{k} & -Q_{k}%
\end{array}
\right]  +\left[
\begin{array}
[c]{c}%
0_{\left(  k+1\right)  n\times kp}\\
Q_{k}\left(  I_{k}\otimes E_{0}\right)
\end{array}
\right]  \left(  S_{k}^{-1}\otimes I_{p}\right)  \left[
\begin{array}
[c]{c}%
0_{\left(  k+1\right)  n\times kp}\\
Q_{k}\left(  I_{k}\otimes E_{0}\right)
\end{array}
\right]  ^{\mathrm{T}}\nonumber \\
&  +\left[
\begin{array}
[c]{cc}%
\mathscr{C}_{k0} & 0_{kq\times kn}%
\end{array}
\right]  ^{\mathrm{T}}\left(  S_{k}\otimes I_{q}\right)  \left[
\begin{array}
[c]{cc}%
\mathscr{C}_{k0} & 0_{kq\times kn}%
\end{array}
\right]  . \label{eq885}%
\end{align}
By (\ref{eq884}) we have%
\begin{align*}
\Delta \mathscr{C}_{k}  &  \triangleq \left[
\begin{array}
[c]{cccccc}%
\Delta B & \Delta A & 0 & \cdots & 0 & 0\\
& \Delta B & \Delta A & \ddots & \vdots & \vdots \\
&  & \ddots & \ddots & 0 & 0\\
&  &  & \Delta B & \Delta A & 0\\
&  &  &  & \Delta B & \Delta A
\end{array}
\right] \\
&  =\left[
\begin{array}
[c]{cccccc}%
E_{0}FB_{0} & E_{0}FA_{0} & 0 & \cdots & 0 & 0\\
& E_{0}FB_{0} & E_{0}FA_{0} & \ddots & \vdots & 0\\
&  & \ddots & \ddots & 0 & \vdots \\
&  &  & E_{0}FB_{0} & E_{0}FA_{0} & 0\\
&  &  &  & E_{0}FB_{0} & E_{0}FA_{0}%
\end{array}
\right] \\
&  =\left(  I_{k}\otimes E_{0}\right)  \left(  I_{k}\otimes F\right)
\mathscr{C}_{k0}.
\end{align*}
By using (\ref{eqf}) we can compute%
\[
\left(  I_{k}\otimes F^{\mathrm{T}}\right)  \left(  S_{k}\otimes I_{p}\right)
\left(  I_{k}\otimes F\right)  =S_{k}\otimes F^{\mathrm{T}}F\leq S_{k}\otimes
I_{q}.
\]
Therefore, by using Lemma \ref{lm1}, we have from (\ref{eq885}) that%
\begin{align}
&  \left[
\begin{array}
[c]{cc}%
\mathit{\Omega}_{k1} & \left(  \mathscr{C}_{k}+\Delta \mathscr{C}_{k}\right)
^{\mathrm{T}}Q_{k}\\
Q_{k}\left(  \mathscr{C}_{k}+\Delta \mathscr{C}_{k}\right)  & -Q_{k}%
\end{array}
\right] \nonumber \\
=  &  \left[
\begin{array}
[c]{cc}%
\mathit{\Omega}_{k1} & \mathscr{C}_{k}^{\mathrm{T}}Q_{k}\\
Q_{k}\mathscr{C}_{k} & -Q_{k}%
\end{array}
\right]  +\left[
\begin{array}
[c]{cc}%
0_{\left(  k+1\right)  n\times \left(  k+1\right)  n} & \Delta \mathscr{C}_{k}%
^{\mathrm{T}}Q_{k}\\
Q_{k}\Delta \mathscr{C}_{k} & 0_{kn\times kn}%
\end{array}
\right] \nonumber \\
=  &  \left[
\begin{array}
[c]{cc}%
\mathit{\Omega}_{k1} & \mathscr{C}_{k}^{\mathrm{T}}Q_{k}\\
Q_{k}\mathscr{C}_{k} & -Q_{k}%
\end{array}
\right]  +\left[
\begin{array}
[c]{c}%
0_{\left(  k+1\right)  n\times kp}\\
Q_{k}\left(  I_{k}\otimes E_{0}\right)
\end{array}
\right]  \left(  I_{k}\otimes F\right)  \left[
\begin{array}
[c]{cc}%
\mathscr{C}_{k0} & 0_{kq\times kn}%
\end{array}
\right] \nonumber \\
&  +\left[
\begin{array}
[c]{cc}%
\mathscr{C}_{k0} & 0_{kq\times kn}%
\end{array}
\right]  ^{\mathrm{T}}\left(  I_{k}\otimes F^{\mathrm{T}}\right)  \left[
\begin{array}
[c]{c}%
0_{\left(  k+1\right)  n\times kp}\\
Q_{k}\left(  I_{k}\otimes E_{0}\right)
\end{array}
\right]  ^{\mathrm{T}}\nonumber \\
\leq &  \left[
\begin{array}
[c]{cc}%
\mathit{\Omega}_{k1} & \mathscr{C}_{k}^{\mathrm{T}}Q_{k}\\
Q_{k}\mathscr{C}_{k} & -Q_{k}%
\end{array}
\right]  +\left[
\begin{array}
[c]{c}%
0_{\left(  k+1\right)  n\times kp}\\
Q_{k}\left(  I_{k}\otimes E_{0}\right)
\end{array}
\right]  \left(  S_{k}^{-1}\otimes I_{p}\right)  \left[
\begin{array}
[c]{c}%
0_{\left(  k+1\right)  n\times kp}\\
Q_{k}\left(  I_{k}\otimes E_{0}\right)
\end{array}
\right]  ^{\mathrm{T}}\nonumber \\
&  +\left[
\begin{array}
[c]{cc}%
\mathscr{C}_{k0} & 0_{kq\times kn}%
\end{array}
\right]  ^{\mathrm{T}}\left(  I_{k}\otimes F^{\mathrm{T}}\right)  \left(
S_{k}\otimes I_{p}\right)  \left(  I_{k}\otimes F\right)  \left[
\begin{array}
[c]{cc}%
\mathscr{C}_{k0} & 0_{kq\times kn}%
\end{array}
\right] \nonumber \\
\leq &  \left[
\begin{array}
[c]{cc}%
\mathit{\Omega}_{k1} & \mathscr{C}_{k}^{\mathrm{T}}Q_{k}\\
Q_{k}\mathscr{C}_{k} & -Q_{k}%
\end{array}
\right]  +\left[
\begin{array}
[c]{c}%
0_{\left(  k+1\right)  n\times kp}\\
Q_{k}\left(  I_{k}\otimes E_{0}\right)
\end{array}
\right]  \left(  S_{k}^{-1}\otimes I_{p}\right)  \left[
\begin{array}
[c]{c}%
0_{\left(  k+1\right)  n\times kp}\\
Q_{k}\left(  I_{k}\otimes E_{0}\right)
\end{array}
\right]  ^{\mathrm{T}}\nonumber \\
&  +\left[
\begin{array}
[c]{cc}%
\mathscr{C}_{k0} & 0_{kq\times kn}%
\end{array}
\right]  ^{\mathrm{T}}\left(  S_{k}\otimes I_{q}\right)  \left[
\begin{array}
[c]{cc}%
\mathscr{C}_{k0} & 0_{kq\times kn}%
\end{array}
\right] \nonumber \\
<  &  0. \label{eq889}%
\end{align}
By a Schur complement, the above inequality is equivalent to%
\begin{align*}
0>  &  \mathit{\Omega}_{k1}+\left(  \mathscr{C}_{k}+\Delta \mathscr{C}_{k}%
\right)  ^{\mathrm{T}}Q_{k}\left(  \mathscr{C}_{k}+\Delta \mathscr{C}_{k}%
\right) \\
=  &  \left[
\begin{array}
[c]{cc}%
A_{k} & B_{k}%
\end{array}
\right]  ^{\mathrm{T}}\left(  P_{k}-Q_{k}\right)  \left[
\begin{array}
[c]{cc}%
A_{k} & B_{k}%
\end{array}
\right]  -L_{k}^{\mathrm{T}}P_{k}L_{k}\\
&  +\left[
\begin{array}
[c]{cc}%
\mathscr{A}_{k}+\Delta \mathscr{A}_{k} & \mathscr{B}_{k}+\Delta \mathscr{B}_{k}%
\end{array}
\right]  ^{\mathrm{T}}Q_{k}\left[
\begin{array}
[c]{cc}%
\mathscr{A}_{k}+\Delta \mathscr{A}_{k} & \mathscr{B}_{k}+\Delta \mathscr{B}_{k}%
\end{array}
\right]  .
\end{align*}
By Theorem \ref{th4}, we know that system (\ref{sys2}) is exponentially
stable. The proof is finished.
\end{proof}

The merit of the proof of Theorem \ref{th5} is that we have utilized the fact
that $\left(  A,B\right)  $ appears as a linear function in the LMIs, which
helps to eliminate the uncertain matrix $F$ in the LMI (\ref{eq55}). This can
not be achieved for the LMI in Lemmas \ref{th1} and \ref{th2}. Moreover, from
the proof we can see that the only conservatism comes from the usage of the
inequality in Lemma \ref{lm1}. Thus the condition in Theorem \ref{th5} is
considered to be quite tight.

By using again the property that $\left(  A,B\right)  $ appears in the matrix
$\mathit{\Omega}_{k}\left(  P_{k},Q_{k}\right)  $ as a quadratic function, we
can extend easily the results in Theorem \ref{th5} to the case of polytopic
type uncertainty, say,%
\begin{equation}
\left[
\begin{array}
[c]{cc}%
\Delta A & \Delta B
\end{array}
\right]  \in \mathrm{co}\left \{  \left[
\begin{array}
[c]{cc}%
A^{\left(  i\right)  } & B^{\left(  i\right)  }%
\end{array}
\right]  ,i=1,2,\ldots,N\right \}  , \label{eq886}%
\end{equation}
where $A^{\left(  i\right)  },B^{\left(  i\right)  },i=1,2,\ldots,N$ are given
matrices. Denote
\[
\left[
\begin{array}
[c]{cc}%
\mathscr{A}_{k}^{\left(  i\right)  } & \mathscr{B}_{k}^{\left(  i\right)  }%
\end{array}
\right]  =\left[
\begin{array}
[c]{ccccc}%
B+B^{\left(  i\right)  } & A+A^{\left(  i\right)  } & \cdots & 0 & 0\\
&  & \ddots & 0 & \vdots \\
&  & B+B^{\left(  i\right)  } & A+A^{\left(  i\right)  } & 0\\
&  &  & B+B^{\left(  i\right)  } & A+A^{\left(  i\right)  }%
\end{array}
\right]  \in \mathbf{R}^{kn\times \left(  k+1\right)  n}.
\]
Then we obtain immediately the following theorem.

\begin{theorem}
\label{th6}The uncertain linear difference equation (\ref{sys2}), where
$\Delta A$ and $\Delta B$ satisfy (\ref{eq886}), is exponentially stable if
there exists positive definite matrices $P_{k},Q_{k}\in \mathbf{R}^{kn\times
kn}$ such that
\begin{align}
\mathit{\Omega}_{k}^{\left(  i\right)  }\left(  P_{k},Q_{k}\right)  =  &
\mathit{\Omega}_{k1}\left(  P_{k},Q_{k}\right)  +\left[
\begin{array}
[c]{cc}%
\mathscr{A}_{k}^{\left(  i\right)  } & \mathscr{B}_{k}^{\left(  i\right)  }%
\end{array}
\right]  ^{\mathrm{T}}Q_{k}\left[
\begin{array}
[c]{cc}%
\mathscr{A}_{k}^{\left(  i\right)  } & \mathscr{B}_{k}^{\left(  i\right)  }%
\end{array}
\right] \nonumber \\
<  &  0, \label{eq888}%
\end{align}
are satisfied for $i=1,2,\ldots,N.$
\end{theorem}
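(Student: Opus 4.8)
The plan is to exploit the structural observation emphasized just before the statement: $\mathit{\Omega}_{k1}\left(P_k,Q_k\right)$ is independent of $\left(A,B\right)$, while the remaining term $[\mathscr{A}_k,\mathscr{B}_k]^{\mathrm{T}}Q_k[\mathscr{A}_k,\mathscr{B}_k]$ is a \emph{quadratic} (congruence-type) function of $\left(A,B\right)$, hence matrix-convex whenever $Q_k>0$. Since the vertices of the polytope certify a common pair $\left(P_k,Q_k\right)$, convexity will let me conclude the LMI (\ref{eq55}) of Theorem \ref{th4} at every point of the polytope, and then Theorem \ref{th4} closes the argument.

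First I would fix an arbitrary admissible realization of the uncertainty: weights $\lambda_i\geq0$ with $\sum_{i=1}^{N}\lambda_i=1$ such that $[\Delta A,\Delta B]=\sum_{i=1}^{N}\lambda_i[A^{(i)},B^{(i)}]$. Because $\sum_i\lambda_i=1$, the perturbed coefficients obey $A+\Delta A=\sum_i\lambda_i\left(A+A^{(i)}\right)$ and $B+\Delta B=\sum_i\lambda_i\left(B+B^{(i)}\right)$, and since the block-Toeplitz map $\left(A,B\right)\mapsto[\mathscr{A}_k,\mathscr{B}_k]$ is linear, the matrix built from $\left(A+\Delta A,B+\Delta B\right)$ equals $\sum_{i=1}^{N}\lambda_i[\mathscr{A}_k^{(i)},\mathscr{B}_k^{(i)}]$.

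Next I would invoke the Jensen-type matrix inequality: for $Q_k>0$ and matrices $X_1,\dots,X_N$ of compatible size,
\[
\Bigl(\sum_{i=1}^{N}\lambda_i X_i\Bigr)^{\mathrm{T}}Q_k\Bigl(\sum_{i=1}^{N}\lambda_i X_i\Bigr)\leq\sum_{i=1}^{N}\lambda_i X_i^{\mathrm{T}}Q_k X_i ,
\]
which follows from the identity $\sum_i\lambda_i X_i^{\mathrm{T}}Q_kX_i-\bigl(\sum_i\lambda_iX_i\bigr)^{\mathrm{T}}Q_k\bigl(\sum_i\lambda_iX_i\bigr)=\sum_{i<j}\lambda_i\lambda_j\left(X_i-X_j\right)^{\mathrm{T}}Q_k\left(X_i-X_j\right)\geq0$. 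Taking $X_i=[\mathscr{A}_k^{(i)},\mathscr{B}_k^{(i)}]$, adding $\mathit{\Omega}_{k1}\left(P_k,Q_k\right)=\sum_i\lambda_i\,\mathit{\Omega}_{k1}\left(P_k,Q_k\right)$ on both sides, and using (\ref{eq888}), I obtain that the matrix $\mathit{\Omega}_k$ associated with the perturbed pair $\left(A+\Delta A,B+\Delta B\right)$ satisfies $\mathit{\Omega}_k\left(P_k,Q_k\right)\leq\sum_{i=1}^{N}\lambda_i\,\mathit{\Omega}_k^{(i)}\left(P_k,Q_k\right)<0$.

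Finally, for this fixed realization the perturbed coefficients form a constant pair for which (\ref{eq55}) holds with the \emph{same} $\left(P_k,Q_k\right)$, so Theorem \ref{th4} yields strong (equivalently, exponential) stability of (\ref{sys2}); as $\left(P_k,Q_k\right)$ does not depend on $\lambda$, this holds for every point of the polytope, which is the claim. I do not expect a genuine obstacle here: the only care needed is the bookkeeping showing that the block-Toeplitz matrix of a convex combination of vertices is the corresponding convex combination of the $[\mathscr{A}_k^{(i)},\mathscr{B}_k^{(i)}]$ (using $\sum_i\lambda_i=1$ to absorb the nominal $\left(A,B\right)$) and the observation that the nominal block $\mathit{\Omega}_{k1}$ is untouched by the perturbation, after which convexity in $Q_k$ does all the work.
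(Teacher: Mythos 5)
Your proof is correct. The bookkeeping is right: with $\sum_i\lambda_i=1$ the perturbed pair satisfies $A+\Delta A=\sum_i\lambda_i(A+A^{(i)})$, $B+\Delta B=\sum_i\lambda_i(B+B^{(i)})$, the map $(A,B)\mapsto[\mathscr{A}_k,\mathscr{B}_k]$ is linear, your Jensen-type identity
\[
\sum_i\lambda_iX_i^{\mathrm{T}}Q_kX_i-\Bigl(\sum_i\lambda_iX_i\Bigr)^{\mathrm{T}}Q_k\Bigl(\sum_i\lambda_iX_i\Bigr)=\sum_{i<j}\lambda_i\lambda_j(X_i-X_j)^{\mathrm{T}}Q_k(X_i-X_j)\geq0
\]
is valid for $Q_k>0$, and the conclusion via Theorem \ref{th4} with the same $(P_k,Q_k)$ for every fixed realization matches the scope of the statement (constant $\Delta A,\Delta B$), which is also how the paper itself closes the argument. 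The difference from the paper is only in how convexity of the quadratic term is exploited: the paper first rewrites each vertex inequality (\ref{eq888}) by a Schur complement as an LMI that is affine in $[\mathscr{A}_k^{(i)},\mathscr{B}_k^{(i)}]$ with a common $-Q_k$ block, takes the convex combination of these affine LMIs to reach the perturbed inequality in the form (\ref{eq889}), and then reverses the Schur complement and invokes Theorem \ref{th4} as in the proof of Theorem \ref{th5}; you instead prove matrix convexity of $X\mapsto X^{\mathrm{T}}Q_kX$ directly and never leave the unlifted $(k+1)n\times(k+1)n$ format. The two routes are equivalent (the Schur-complement lifting is the standard device for establishing exactly this convexity), but yours is more self-contained and elementary, while the paper's lifted form is the one that is affine in the data and hence the form one would actually feed to an LMI solver and reuse for the norm-bounded case.
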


\begin{proof}
Notice that (\ref{eq888}) implies%
\[
\left[
\begin{array}
[c]{cc}%
\mathit{\Omega}_{k1}\left(  P_{k},Q_{k}\right)  & \left[
\begin{array}
[c]{cc}%
\mathscr{A}_{k}^{\left(  i\right)  } & \mathscr{B}_{k}^{\left(  i\right)  }%
\end{array}
\right]  ^{\mathrm{T}}Q_{k}\\
Q_{k}\left[
\begin{array}
[c]{cc}%
\mathscr{A}_{k}^{\left(  i\right)  } & \mathscr{B}_{k}^{\left(  i\right)  }%
\end{array}
\right]  & -Q_{k}%
\end{array}
\right]  <0,
\]
where $i=1,2,\ldots,N$. It follows that, for any $\alpha_{i}\geq
0,i=1,2,\ldots,N$ with $\alpha_{1}+\alpha_{2}+\cdots+\alpha_{N}=1,$ and%
\[
\left[
\begin{array}
[c]{cc}%
\Delta A & \Delta B
\end{array}
\right]  =\sum \limits_{i=1}^{N}\alpha_{i}\left[
\begin{array}
[c]{cc}%
A^{\left(  i\right)  } & B^{\left(  i\right)  }%
\end{array}
\right]  ,
\]
we have
\begin{align*}
0  &  >\left[
\begin{array}
[c]{cc}%
\sum \limits_{i=1}^{N}\alpha_{i}\mathit{\Omega}_{k1}\left(  P_{k},Q_{k}\right)
& \sum \limits_{i=1}^{N}\alpha_{i}\left[
\begin{array}
[c]{cc}%
\mathscr{A}_{k}^{\left(  i\right)  } & \mathscr{B}_{k}^{\left(  i\right)  }%
\end{array}
\right]  ^{\mathrm{T}}Q_{k}\\
Q_{k}\sum \limits_{i=1}^{N}\alpha_{i}\left[
\begin{array}
[c]{cc}%
\mathscr{A}_{k}^{\left(  i\right)  } & \mathscr{B}_{k}^{\left(  i\right)  }%
\end{array}
\right]  & -\sum \limits_{i=1}^{N}\alpha_{i}Q_{k}%
\end{array}
\right] \\
&  =\left[
\begin{array}
[c]{cc}%
\mathit{\Omega}_{k1}\left(  P_{k},Q_{k}\right)  & \left(  \mathscr{C}_{k}%
+\Delta \mathscr{C}_{k}\right)  ^{\mathrm{T}}Q_{k}\\
Q_{k}\left(  \mathscr{C}_{k}+\Delta \mathscr{C}_{k}\right)  & -Q_{k}%
\end{array}
\right]  ,
\end{align*}
which is exactly in the form of (\ref{eq889}). The remaining of the proof is
similar to that of Theorem \ref{th5} and is omitted.
\end{proof}

\section{\label{sec3}Interpretations and Relationships}

We first provide time-domain interpretations of Theorem \ref{th4} and Lemma
\ref{th2} by establishing LKFs.

\begin{lemma}
\label{lm5}For any integer $k\geq1,$ there holds%
\begin{equation}
x(t)=\sum \limits_{i=0}^{k}A^{\left[  i\right]  }B^{\left[  k-i\right]
}x\left(  t-ia-\left(  k-i\right)  b\right)  . \label{eq82}%
\end{equation}

\end{lemma}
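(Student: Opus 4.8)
The plan is to proceed by induction on $k$. The base case $k=1$ is exactly the defining equation \eqref{sys}: taking $i=0$ and $i=1$ in the sum gives $A^{[0]}B^{[1]}x(t-b)+A^{[1]}B^{[0]}x(t-a)=Bx(t-b)+Ax(t-a)=x(t)$, provided we adopt the convention $A^{[0]}=B^{[0]}=I_n$ (note the bracketed powers $A^{[i]}$ are just the ordinary matrix powers $A^i$, introduced in Appendix A2). So first I would state this convention explicitly and verify the base case.

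For the inductive step, assume \eqref{eq82} holds for some $k\geq1$. The idea is to apply \eqref{sys} once more inside the sum: in each term $A^{[i]}B^{[k-i]}x(t-ia-(k-i)b)$, replace the argument $x(s)$ with $s=t-ia-(k-i)b$ by $Ax(s-a)+Bx(s-b)$. This yields
\begin{equation}
x(t)=\sum_{i=0}^{k}A^{[i]}B^{[k-i]}\Bigl(Ax\bigl(t-(i+1)a-(k-i)b\bigr)+Bx\bigl(t-ia-(k-i+1)b\bigr)\Bigr).\nonumber
\end{equation}
Since $A$ and $B$ commute with their own powers, $A^{[i]}B^{[k-i]}A=A^{[i+1]}B^{[k-i]}$ and $A^{[i]}B^{[k-i]}B=A^{[i]}B^{[k+1-i]}$. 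Re-indexing the first sum by $j=i+1$ and the second by $j=i$, the coefficient of $x(t-ja-(k+1-j)b)$ in the combined expression is $A^{[j]}B^{[k+1-j]}$ coming from the second sum (for $0\leq j\leq k$) plus $A^{[j]}B^{[k+1-j]}$ coming from the first sum (for $1\leq j\leq k+1$); more carefully, the total coefficient of the delay $ja+(k+1-j)b$ is obtained by adding contributions, and one checks that the telescoping/overlap produces exactly $A^{[j]}B^{[k+1-j]}$ for each $j=0,\dots,k+1$ — this is the standard Pascal-type identity underlying the binomial expansion, which is legitimate here precisely because $A^{[i]}$ and $B^{[k-i]}$ are powers of the \emph{same} two fixed matrices and thus the relevant rearrangements are valid even though $A$ and $B$ need not commute with each other.

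The one subtlety I would be careful about is the bookkeeping of the binomial coefficients. Expanding $x(t)$ as $k$-fold nested substitutions of \eqref{sys} naively produces $2^k$ terms indexed by words in $\{a,b\}$, and grouping by the multiset $\{i\text{ copies of }a,\ (k-i)\text{ copies of }b\}$ would a priori give a coefficient that is a sum of $\binom{k}{i}$ non-commuting products of $A$'s and $B$'s, \emph{not} simply $\binom{k}{i}A^iB^{k-i}$. The resolution — and the step I expect to require the most care in writing up — is that one should \emph{not} expand all the way down by brute force; instead the clean one-step induction above works because at stage $k$ the coefficients are already the monomials $A^iB^{k-i}$, and a single application of \eqref{sys} plus the commutation of each matrix with its own powers preserves this monomial form. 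So the key is to present the induction in the one-step form and resist the temptation to write it as a $2^k$-term expansion; the algebra then reduces to the identity $\binom{k}{i}+\binom{k}{i-1}=\binom{k+1}{i}$ at the level of how many times each monomial appears — but since we track only the final monomial and not its multiplicity in a brute-force expansion, even this is automatic. I would therefore close the argument by simply collecting the coefficient of each delayed term after the re-indexing and observing it equals $A^{[j]}B^{[k+1-j]}$, completing the induction.
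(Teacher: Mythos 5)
There is a genuine gap, and it originates in your reading of the notation. In this paper $A^{\left[ i\right] }B^{\left[ j\right] }$ is \emph{not} the ordinary monomial $A^{i}B^{j}$: it is the shuffle product defined in Appendix A2, i.e.\ the sum of all $\binom{i+j}{i}$ interleaved words containing $i$ copies of $A$ and $j$ copies of $B$ (for example $A^{\left[ 1\right] }B^{\left[ 2\right] }=AB^{2}+BAB+B^{2}A$). With your interpretation the lemma itself is false for noncommuting $A,B$: already for $k=2$, substituting (\ref{sys}) into itself gives
\begin{equation}
x(t)=A^{2}x(t-2a)+\left(  AB+BA\right)  x(t-a-b)+B^{2}x(t-2b),\nonumber
\end{equation}
so the coefficient of $x(t-a-b)$ is $AB+BA=A^{\left[ 1\right] }B^{\left[ 1\right] }$, not $AB$. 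Correspondingly, the identity you invoke in the inductive step, $A^{\left[ i\right] }B^{\left[ k-i\right] }A=A^{\left[ i+1\right] }B^{\left[ k-i\right] }$, is false under the actual definition (take $i=0$, $k-i=1$: $A^{\left[ 0\right] }B^{\left[ 1\right] }A=BA$, while $A^{\left[ 1\right] }B^{\left[ 1\right] }=AB+BA$), and it is only "true" under the incorrect ordinary-power reading, for which the statement being proved fails. The "subtlety" you flagged — that a brute-force expansion gives $\binom{k}{i}$ noncommuting words rather than a single monomial — is not a bookkeeping artifact to be argued away; it is exactly the content of the lemma, and those $\binom{k}{i}$ words are precisely what $A^{\left[ i\right] }B^{\left[ k-i\right] }$ denotes.

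Your overall strategy (one-step induction: substitute (\ref{sys}) once into each term of the stage-$k$ identity and re-index) is the same as the paper's, and it does work once the coefficients are tracked as shuffle products. The step that closes the induction is the recursion (\ref{eq87}),
\begin{equation}
A^{\left[ i\right] }B^{\left[ m+1-i\right] }=\left(  A^{\left[ i-1\right]
}B^{\left[ m+1-i\right] }\right)  A+\left(  A^{\left[ i\right] }B^{\left[
m-i\right] }\right)  B,\nonumber
\end{equation}
which is the correct noncommutative Pascal-type identity replacing the false right-multiplication rule you used. If you restate your induction with $A^{\left[ i\right] }B^{\left[ k-i\right] }$ understood as the shuffle product and replace your coefficient-collection step by this identity, the argument goes through and coincides with the paper's proof.
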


\begin{proof}
Clearly, it follows from (\ref{sys}) that (\ref{eq82}) holds true with $k=1.$
Assume that (\ref{eq82}) is true with $k=m,$ namely,%
\begin{equation}
x(t)=\sum \limits_{i=0}^{m}A^{\left[  i\right]  }B^{\left[  m-i\right]
}x\left(  t-ia-\left(  m-i\right)  b\right)  . \label{eq81}%
\end{equation}
Then, by inserting (\ref{sys}) into (\ref{eq81}), we have%
\begin{align}
x(t)=  &  \sum \limits_{i=0}^{m}A^{\left[  i\right]  }B^{\left[  m-i\right]
}\left(  Ax\left(  t-\left(  i+1\right)  a-\left(  m-i\right)  b\right)
+Bx\left(  t-ia-\left(  m+1-i\right)  b\right)  \right) \nonumber \\
=  &  \sum \limits_{i=0}^{m}A^{\left[  i\right]  }B^{\left[  m-i\right]
}Bx\left(  t-ia-\left(  m+1-i\right)  b\right)  +\sum \limits_{i=0}%
^{m}A^{\left[  i\right]  }B^{\left[  m-i\right]  }Ax\left(  t-\left(
i+1\right)  a-\left(  m-i\right)  b\right) \nonumber \\
=  &  \sum \limits_{i=0}^{m}A^{\left[  i\right]  }B^{\left[  m-i\right]
}Bx\left(  t-ia-\left(  m+1-i\right)  b\right)  +\sum \limits_{j=1}%
^{m+1}A^{\left[  j-1\right]  }B^{\left[  m+1-j\right]  }Ax\left(  t-ja-\left(
m+1-j\right)  b\right) \nonumber \\
=  &  A^{\left[  0\right]  }B^{\left[  m\right]  }Bx\left(  t-\left(
m+1\right)  b\right)  +\sum \limits_{i=1}^{m}A^{\left[  i\right]  }B^{\left[
m-i\right]  }Bx\left(  t-ia-\left(  m+1-i\right)  b\right) \nonumber \\
&  +\sum \limits_{j=1}^{m}A^{\left[  j-1\right]  }B^{\left[  m+1-j\right]
}Ax\left(  t-ja-\left(  m+1-j\right)  b\right)  +A^{\left[  m\right]
}B^{\left[  0\right]  }Ax\left(  t-\left(  m+1\right)  a\right) \nonumber \\
=  &  B^{\left[  m+1\right]  }x\left(  t-\left(  m+1\right)  b\right)
+A^{\left[  m+1\right]  }x\left(  t-\left(  m+1\right)  a\right) \nonumber \\
&  +\sum \limits_{i=1}^{m}\left(  A^{\left[  i\right]  }B^{\left[  m-i\right]
}B+A^{\left[  i-1\right]  }B^{\left[  m+1-i\right]  }A\right)  x\left(
t-ia-\left(  m+1-i\right)  b\right)  . \label{eq83}%
\end{align}
Notice that (see (\ref{eq87}) in Appendix A2)%
\[
A^{\left[  i\right]  }B^{\left[  m-i\right]  }B+A^{\left[  i-1\right]
}B^{\left[  m+1-i\right]  }A=A^{\left[  i\right]  }B^{\left[  m+1-i\right]
},i=1,2,\ldots,m,
\]
substitution of which into (\ref{eq83}) gives%
\begin{align*}
x(t)  &  =B^{\left[  m+1\right]  }x\left(  t-\left(  m+1\right)  b\right)
+\sum \limits_{i=1}^{m}A^{\left[  i\right]  }B^{\left[  m+1-i\right]  }x\left(
t-ia-\left(  m+1-i\right)  b\right)  +A^{\left[  m+1\right]  }x\left(
t-\left(  m+1\right)  a\right) \\
&  =\sum \limits_{i=0}^{m+1}A^{\left[  i\right]  }B^{\left[  m+1-i\right]
}x\left(  t-ia-\left(  m+1-i\right)  b\right)  .
\end{align*}
Therefore, (\ref{eq82}) holds with $k=m+1.$ The proof is finished by
mathematical induction.
\end{proof}

In the following, we assume, without loss of generality, that $b>a$ since
otherwise we can change the roles of $a$ and $b.$

\begin{lemma}
\label{lm2}For any integer $k\geq1,$ let
\begin{equation}
\left \{
\begin{array}
[c]{rl}%
X_{k}\left(  t\right)  & =\left[
\begin{array}
[c]{c}%
x\left(  t-kb\right) \\
x\left(  t-\left(  k-1\right)  b-a\right) \\
\vdots \\
x\left(  t-2b-\left(  k-2\right)  a\right) \\
x\left(  t-b-\left(  k-1\right)  a\right)
\end{array}
\right]  \in \mathbf{R}^{kn},\\
U_{k}\left(  t\right)  & =x\left(  t-ka\right)  \in \mathbf{R}^{n},\\
Y_{k}\left(  t\right)  & =x\left(  t\right)  \in \mathbf{R}^{n}.
\end{array}
\right.  \label{eq85}%
\end{equation}
Then $\left(  U_{k}\left(  t\right)  ,X_{k}\left(  t\right)  ,Y_{k}\left(
t\right)  \right)  $ satisfies%
\begin{equation}
\left.
\begin{array}
[c]{rl}%
X_{k}\left(  t+b-a\right)  & =A_{k}X_{k}\left(  t\right)  +B_{k}U_{k}\left(
t\right)  ,\\
Y_{k}\left(  t\right)  & =C_{k}X_{k}\left(  t\right)  +D_{k}U_{k}\left(
t\right)  .
\end{array}
\right \}  \label{eq84}%
\end{equation}

\end{lemma}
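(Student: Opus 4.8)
The plan is to verify the two identities in (\ref{eq84}) directly from the definitions in (\ref{eq85}) and (\ref{eqab3}) together with the binomial matrices $C_{k},D_{k}$ introduced in the proof of Theorem \ref{th4}; only the output identity will actually invoke the dynamics, via Lemma \ref{lm5}. For $j=1,\dots,k$ denote by $X_{k}^{(j)}(t)=x\left(t-(k-j+1)b-(j-1)a\right)\in\mathbf{R}^{n}$ the $j$-th block of $X_{k}(t)$, so that $X_{k}(t)=[X_{k}^{(1)}(t)^{\mathrm{T}},\dots,X_{k}^{(k)}(t)^{\mathrm{T}}]^{\mathrm{T}}$.

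For the state equation I would first note that, by the shape of $A_{k}$ and $B_{k}$, the map $X\mapsto A_{k}X+B_{k}U$ on block vectors deletes the first block, shifts the remaining blocks up by one, and appends $U$ as the new last block; hence $A_{k}X_{k}(t)+B_{k}U_{k}(t)$ is the block vector with blocks $X_{k}^{(2)}(t),\dots,X_{k}^{(k)}(t),U_{k}(t)$. On the other hand, substituting $t+b-a$ for $t$ in the definition of $X_{k}^{(j)}$ gives $X_{k}^{(j)}(t+b-a)=x\left(t-(k-j)b-ja\right)$, which equals $X_{k}^{(j+1)}(t)$ for $j=1,\dots,k-1$ and equals $x(t-ka)=U_{k}(t)$ for $j=k$. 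Comparing the two block vectors gives $X_{k}(t+b-a)=A_{k}X_{k}(t)+B_{k}U_{k}(t)$. This step is a pure relabeling of time arguments and does not use (\ref{sys}) (the identity holds irrespective of the sign of $b-a$; the standing assumption $b>a$ only makes $b-a>0$).

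For the output equation, the $j$-th block of $C_{k}$ is $B^{[k-j+1]}A^{[j-1]}$ and $D_{k}=A^{[k]}$, so
\[ C_{k}X_{k}(t)+D_{k}U_{k}(t)=\sum_{j=1}^{k}B^{[k-j+1]}A^{[j-1]}x\left(t-(k-j+1)b-(j-1)a\right)+A^{[k]}x(t-ka). \]
Putting $i=j-1$ and observing that the last term is exactly the $i=k$ summand (with $B^{[0]}=A^{[0]}=I_{n}$), the right-hand side becomes $\sum_{i=0}^{k}B^{[k-i]}A^{[i]}x\left(t-(k-i)b-ia\right)$. Since $B^{[k-i]}A^{[i]}$ and $A^{[i]}B^{[k-i]}$ denote the same coefficient (the sum of all length-$k$ words in $A,B$ with $i$ occurrences of $A$, as in the binomial-type expansion of Appendix A2), this equals $\sum_{i=0}^{k}A^{[i]}B^{[k-i]}x\left(t-ia-(k-i)b\right)$, which by Lemma \ref{lm5}, i.e.\ (\ref{eq82}), is $x(t)=Y_{k}(t)$. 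Hence (\ref{eq84}) holds.

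The whole argument is bookkeeping; the only things needing care are matching the block index $j$ of $C_{k}$ with the mixed delay $(k-j+1)b+(j-1)a$ appearing in $X_{k}(t)$, and recalling that the bracket-power coefficients are symmetric in $A$ and $B$ so that Lemma \ref{lm5} applies verbatim. I foresee no real obstacle.
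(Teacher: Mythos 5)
Your proposal is correct and follows essentially the same route as the paper: the state equation is verified by the block-shift/relabeling of the time arguments induced by the structure of $A_{k},B_{k}$, and the output equation is obtained by invoking Lemma \ref{lm5} and identifying the resulting sum with $C_{k}X_{k}(t)+D_{k}U_{k}(t)$ (the paper uses the same commutativity $A^{[i]}B^{[j]}=B^{[j]}A^{[i]}$ implicitly). Your write-up merely runs the output identity in the opposite direction, which changes nothing of substance.
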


\begin{proof}
This can be verified by direct computation. In fact, by definition, we have%
\begin{align*}
X_{k}\left(  t+b-a\right)   &  =\left[
\begin{array}
[c]{c}%
x\left(  t-\left(  k-1\right)  b-a\right) \\
x\left(  t-\left(  k-2\right)  b-2a\right) \\
\vdots \\
x\left(  t-b-\left(  k-1\right)  a\right) \\
x\left(  t-ka\right)
\end{array}
\right] \\
&  =A_{k}X_{k}\left(  t\right)  +B_{k}U\left(  t\right)  ,
\end{align*}
and it follows from Lemma \ref{lm5} that%
\begin{align*}
Y_{k}\left(  t\right)   &  =\sum \limits_{i=0}^{k}A^{\left[  i\right]
}B^{\left[  k-i\right]  }x\left(  t-ia-\left(  k-i\right)  b\right) \\
&  =\left[
\begin{array}
[c]{cccc}%
B^{\left[  k\right]  } & A^{\left[  1\right]  }B^{\left[  k-1\right]  } &
\cdots & A^{\left[  k-1\right]  }B^{\left[  1\right]  }%
\end{array}
\right]  X_{k}\left(  t\right)  +A^{\left[  k\right]  }x\left(  t-ka\right) \\
&  =C_{k}X_{k}\left(  t\right)  +D_{k}U_{k}\left(  t\right)  .
\end{align*}
The proof is finished.
\end{proof}

We next provide a time-domain interpretation of Theorem \ref{th4} by
establishing an LKF for the system.

\begin{proposition}
\label{pp1}For any integer $k\geq1,$\ let $\mathit{\Omega}_{k}\left(
P,Q\right)  $ be defined by (\ref{eq77b}) where $\left(  A_{k},B_{k}%
,\mathscr{A}_{k},\mathscr{B}_{k}\right)  $ is defined by (\ref{eqab3}%
)-(\ref{eqab4}). Consider the following LKF
\begin{equation}
V_{k}\left(  x_{t}\right)  =\int_{t-b}^{t-a}X_{k}^{\mathrm{T}}\left(
s\right)  P_{k}X_{k}\left(  s\right)  \mathrm{d}s+\int_{t-a}^{t}%
X_{k}^{\mathrm{T}}\left(  s\right)  Q_{k}X_{k}\left(  s\right)  \mathrm{d}s,
\label{eqv1}%
\end{equation}
where $P_{k}=P_{k}^{\mathrm{T}}\in \mathbf{R}^{kn\times kn}$ and $Q_{k}%
=Q_{k}^{\mathrm{T}}\in \mathbf{R}^{kn\times kn}.$ Then%
\begin{equation}
\dot{V}_{k}\left(  x_{t}\right)  =\left[
\begin{array}
[c]{c}%
X_{k}\left(  t-b\right) \\
x\left(  t-b-ka\right)
\end{array}
\right]  ^{\mathrm{T}}\mathit{\Omega}_{k}\left(  P_{k},Q_{k}\right)  \left[
\begin{array}
[c]{c}%
X_{k}\left(  t-b\right) \\
x\left(  t-b-ka\right)
\end{array}
\right]  . \label{eqdv1}%
\end{equation}

\end{proposition}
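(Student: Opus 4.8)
The plan is to differentiate $V_k$ along the trajectory, rewrite each of the four resulting boundary terms by means of the state–space realization of Lemma \ref{lm2} (and a companion identity for $\left(\mathscr{A}_k,\mathscr{B}_k\right)$), and then recognize the outcome as the quadratic form generated by $\mathit{\Omega}_k\left(P_k,Q_k\right)$. Throughout we use the standing assumption $b>a$, so that the interval $[t-b,t-a]$ is nondegenerate; since the solution $x(\cdot)$, hence $X_k(\cdot)$, need not be continuous, identity (\ref{eqdv1}) is understood to hold wherever $\dot V_k$ exists (almost everywhere suffices for the Lyapunov argument that uses it).

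First I would apply the Leibniz rule to (\ref{eqv1}) to obtain
\[
\dot V_k\left(x_t\right)=X_k^{\mathrm{T}}\left(t-a\right)P_kX_k\left(t-a\right)-X_k^{\mathrm{T}}\left(t-b\right)P_kX_k\left(t-b\right)+X_k^{\mathrm{T}}\left(t\right)Q_kX_k\left(t\right)-X_k^{\mathrm{T}}\left(t-a\right)Q_kX_k\left(t-a\right).
\]
Next, write $\xi=\left[X_k^{\mathrm{T}}\left(t-b\right),\,x^{\mathrm{T}}\left(t-b-ka\right)\right]^{\mathrm{T}}\in\mathbf{R}^{\left(k+1\right)n}$ for the vector appearing on both sides of (\ref{eqdv1}). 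Since $L_k=\left[I_{kn},0\right]$ we have $L_k\xi=X_k\left(t-b\right)$ at once, and replacing $t$ by $t-b$ in the first line of (\ref{eq84}) (noting $U_k\left(t-b\right)=x\left(t-b-ka\right)$) gives $\left[A_k,B_k\right]\xi=A_kX_k\left(t-b\right)+B_kx\left(t-b-ka\right)=X_k\left(t-a\right)$.

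The key step is to show $\left[\mathscr{A}_k,\mathscr{B}_k\right]\xi=X_k\left(t\right)$. For this I would establish, by the same block-by-block computation as in the proof of Lemma \ref{lm2}, the companion realization identity $X_k\left(\tau+b\right)=\mathscr{A}_kX_k\left(\tau\right)+\mathscr{B}_kU_k\left(\tau\right)$: comparing the $i$-th block of the two sides and invoking (\ref{sys}) in the form $x\left(s\right)=Ax\left(s-a\right)+Bx\left(s-b\right)$ with $s$ chosen as the appropriate delayed argument reproduces the $i$-th block of $X_k\left(\tau+b\right)$ for $i=1,\dots,k-1$; for the last block, the same substitution with $s=\tau-\left(k-1\right)a$ forces precisely the term $\mathscr{B}_kU_k\left(\tau\right)=\mathscr{B}_kx\left(\tau-ka\right)$. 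Setting $\tau=t-b$ then yields $\left[\mathscr{A}_k,\mathscr{B}_k\right]\xi=X_k\left(t\right)$.

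Finally I would expand $\mathit{\Omega}_k\left(P_k,Q_k\right)$ from (\ref{eq77b}) as
\[
\mathit{\Omega}_k\left(P_k,Q_k\right)=\left[A_k,B_k\right]^{\mathrm{T}}P_k\left[A_k,B_k\right]-L_k^{\mathrm{T}}P_kL_k-\left[A_k,B_k\right]^{\mathrm{T}}Q_k\left[A_k,B_k\right]+\left[\mathscr{A}_k,\mathscr{B}_k\right]^{\mathrm{T}}Q_k\left[\mathscr{A}_k,\mathscr{B}_k\right],
\]
sandwich this between $\xi^{\mathrm{T}}$ and $\xi$, and substitute the three identities $L_k\xi=X_k\left(t-b\right)$, $\left[A_k,B_k\right]\xi=X_k\left(t-a\right)$, $\left[\mathscr{A}_k,\mathscr{B}_k\right]\xi=X_k\left(t\right)$; this produces exactly $X_k^{\mathrm{T}}\left(t-a\right)P_kX_k\left(t-a\right)-X_k^{\mathrm{T}}\left(t-b\right)P_kX_k\left(t-b\right)-X_k^{\mathrm{T}}\left(t-a\right)Q_kX_k\left(t-a\right)+X_k^{\mathrm{T}}\left(t\right)Q_kX_k\left(t\right)$, which matches term-for-term the expression for $\dot V_k\left(x_t\right)$ found above. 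The only genuinely delicate part is the bookkeeping in the companion realization identity — keeping the delay arguments $\left(k-i+1\right)b+\left(i-1\right)a$ consistently aligned across blocks and treating the boundary block correctly — while everything else is straightforward substitution.
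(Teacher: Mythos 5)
Your proposal is correct and follows essentially the same route as the paper's own proof: Leibniz differentiation of $V_k$, the shifted identity $X_k(t-a)=A_kX_k(t-b)+B_kx(t-b-ka)$ from Lemma \ref{lm2}, the companion relation $X_k(t)=\left[\mathscr{A}_k\; \mathscr{B}_k\right]\left[X_k^{\mathrm{T}}(t-b),\,x^{\mathrm{T}}(t-b-ka)\right]^{\mathrm{T}}$ obtained from (\ref{sys}) (which the paper verifies by displaying the full block matrix rather than block-by-block, but the computation is the same), and substitution into the expansion of $\mathit{\Omega}_k(P_k,Q_k)$.
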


\begin{proof}
From (\ref{eq85}) and (\ref{eq84}) we know $U_{k}\left(  t+a-b\right)
=x\left(  t-\left(  k-1\right)  a-b\right)  $ and%
\[
X_{k}\left(  t-a\right)  =A_{k}X_{k}\left(  t-b\right)  +B_{k}x\left(
t-ka-b\right)  .
\]
By using (\ref{sys}) and noting the structures of $\mathscr{A}_{k}$ and
$\mathscr{B}_{k},$ we have%
\begin{align*}
X_{k}\left(  t\right)   &  =\left[
\begin{array}
[c]{c}%
x\left(  t-kb\right) \\
x\left(  t-\left(  k-1\right)  b-a\right) \\
\vdots \\
x\left(  t-2b-\left(  k-2\right)  a\right) \\
x\left(  t-b-\left(  k-1\right)  a\right)
\end{array}
\right] \\
&  =\left[
\begin{array}
[c]{cccccc}%
B & A & 0 & \cdots & 0 & 0\\
& B & A & \ddots & \vdots & 0\\
&  & \ddots & \ddots & 0 & \vdots \\
&  &  & B & A & 0\\
&  &  &  & B & A
\end{array}
\right]  \left[
\begin{array}
[c]{c}%
x\left(  t-\left(  k+1\right)  b\right) \\
x\left(  t-kb-a\right) \\
\vdots \\
x\left(  t-2b-\left(  k-1\right)  a\right) \\
x\left(  t-b-ka\right)
\end{array}
\right] \\
&  =\left[
\begin{array}
[c]{cc}%
\mathscr{A}_{k} & \mathscr{B}_{k}%
\end{array}
\right]  \left[
\begin{array}
[c]{c}%
X_{k}\left(  t-b\right) \\
x\left(  t-b-ka\right)
\end{array}
\right]  .
\end{align*}
Therefore, it follows from (\ref{eq86}) that%
\begin{align*}
\dot{V}_{k}\left(  x_{t}\right)  =  &  X_{k}^{\mathrm{T}}\left(  t-a\right)
P_{k}X_{k}\left(  t-a\right)  -X_{k}^{\mathrm{T}}\left(  t-b\right)
P_{k}X_{k}\left(  t-b\right) \\
&  +X_{k}^{\mathrm{T}}\left(  t\right)  Q_{k}X_{k}\left(  t\right)
-X_{k}^{\mathrm{T}}\left(  t-a\right)  Q_{k}X_{k}\left(  t-a\right) \\
=  &  \left(  A_{k}X_{k}\left(  t-b\right)  +B_{k}x\left(  t-b-ka\right)
\right)  ^{\mathrm{T}}\left(  P_{k}-Q_{k}\right)  \left(  A_{k}X_{k}\left(
t-b\right)  +B_{k}x\left(  t-b-ka\right)  \right) \\
&  +X_{k}^{\mathrm{T}}\left(  t\right)  Q_{k}X_{k}\left(  t\right)
-X_{k}^{\mathrm{T}}\left(  t-b\right)  P_{k}X_{k}\left(  t-b\right) \\
=  &  \left[
\begin{array}
[c]{c}%
X_{k}\left(  t-b\right) \\
x\left(  t-b-ka\right)
\end{array}
\right]  ^{\mathrm{T}}\left[
\begin{array}
[c]{cc}%
A_{k} & B_{k}%
\end{array}
\right]  ^{\mathrm{T}}\left(  P_{k}-Q_{k}\right)  \left[
\begin{array}
[c]{cc}%
A_{k} & B_{k}%
\end{array}
\right]  \left[
\begin{array}
[c]{c}%
X_{k}\left(  t-b\right) \\
x\left(  t-b-ka\right)
\end{array}
\right] \\
&  -\left[
\begin{array}
[c]{c}%
X_{k}\left(  t-b\right) \\
x\left(  t-b-ka\right)
\end{array}
\right]  ^{\mathrm{T}}\left[
\begin{array}
[c]{cc}%
I_{kn} & 0_{kn\times n}%
\end{array}
\right]  ^{\mathrm{T}}P_{k}\left[
\begin{array}
[c]{cc}%
I_{kn} & 0_{kn\times n}%
\end{array}
\right]  \left[
\begin{array}
[c]{c}%
X_{k}\left(  t-b\right) \\
x\left(  t-b-ka\right)
\end{array}
\right] \\
&  +\left[
\begin{array}
[c]{c}%
X_{k}\left(  t-b\right) \\
x\left(  t-b-ka\right)
\end{array}
\right]  ^{\mathrm{T}}\left[
\begin{array}
[c]{cc}%
\mathscr{A}_{k} & \mathscr{B}_{k}%
\end{array}
\right]  ^{\mathrm{T}}Q_{k}\left[
\begin{array}
[c]{cc}%
\mathscr{A}_{k} & \mathscr{B}_{k}%
\end{array}
\right]  \left[
\begin{array}
[c]{c}%
X_{k}\left(  t-b\right) \\
x\left(  t-b-ka\right)
\end{array}
\right] \\
=  &  \left[
\begin{array}
[c]{c}%
X_{k}\left(  t-b\right) \\
x\left(  t-b-ka\right)
\end{array}
\right]  ^{\mathrm{T}}\mathit{\Omega}_{k}\left(  P_{k},Q_{k}\right)  \left[
\begin{array}
[c]{c}%
X_{k}\left(  t-b\right) \\
x\left(  t-b-ka\right)
\end{array}
\right]  .
\end{align*}
The proof is finished.
\end{proof}

Similar to Lemma \ref{lm2}, we can present the following lemma.

\begin{lemma}
\label{lm10}For any integer $k\geq1,$\ let $\overline{A}_{k},\overline{B}_{k}$
be defined in (\ref{eqab1}) and
\begin{equation}
\overline{C}_{k}=\left[
\begin{array}
[c]{cccc}%
B & AB & \cdots & A^{k-1}B
\end{array}
\right]  \in \mathbf{R}^{n\times kn},\; \overline{D}_{k}=A^{k}\in
\mathbf{R}^{n\times n}. \label{eqcd}%
\end{equation}
Let
\begin{equation}
\left \{
\begin{array}
[c]{rl}%
\overline{X}_{k}\left(  t\right)  & =\left[
\begin{array}
[c]{c}%
x\left(  t\right) \\
x\left(  t-a\right) \\
\vdots \\
x\left(  t-\left(  k-1\right)  a\right)
\end{array}
\right]  \in \mathbf{R}^{kn},\\
\overline{U}_{k}\left(  t\right)  & =x\left(  t+b-ka\right)  \in \mathbf{R}%
^{n},\\
\overline{Y}_{k}\left(  t\right)  & =x\left(  t+b\right)  \in \mathbf{R}^{n}.
\end{array}
\right.  \label{eq73}%
\end{equation}
Then $\left(  \overline{U}_{k}\left(  t\right)  ,\overline{X}_{k}\left(
t\right)  ,\overline{Y}_{k}\left(  t\right)  \right)  $ satisfies%
\begin{equation}
\left.
\begin{array}
[c]{rl}%
\overline{X}_{k}\left(  t+b-a\right)  & =\overline{A}_{k}\overline{X}%
_{k}\left(  t\right)  +\overline{B}_{k}\overline{U}_{k}\left(  t\right)  ,\\
\overline{Y}_{k}\left(  t\right)  & =\overline{C}_{k}\overline{X}_{k}\left(
t\right)  +\overline{D}_{k}\overline{U}_{k}\left(  t\right)  .
\end{array}
\right \}  \label{eq74}%
\end{equation}

\end{lemma}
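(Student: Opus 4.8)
The plan is to follow the template of the proof of Lemma \ref{lm2}: both identities in (\ref{eq74}) are verified by direct substitution, the only new ingredient being an ``unrolling'' of (\ref{sys}) analogous to Lemma \ref{lm5} but tailored to the sampling of $x$ along the $a$-direction used in $\overline{X}_{k}$ in (\ref{eq73}). Concretely, I would first establish, by induction on $k$, that
\begin{equation}
x(t+b)=\sum\limits_{i=0}^{k-1}A^{i}Bx\left(  t-ia\right)  +A^{k}x\left(  t+b-ka\right)  .\label{eqplan}
\end{equation}
The base case $k=1$ is (\ref{sys}) evaluated at time $t+b$. For the inductive step, assuming (\ref{eqplan}) with $k=m$, one replaces the leading term $A^{m}x(t+b-ma)$ using (\ref{sys}) at time $t+b-ma$, i.e. $x(t+b-ma)=Ax(t+b-(m+1)a)+Bx(t-ma)$, and collects terms to obtain (\ref{eqplan}) with $k=m+1$. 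Since the iteration only ever substitutes in the $a$-direction, only ordinary powers of $A$ appear, which is why $\overline{C}_{k}$ and $\overline{D}_{k}$ in (\ref{eqcd}) carry plain powers rather than the bracket powers of Lemma \ref{lm5}.

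Granting (\ref{eqplan}), the output equation in (\ref{eq74}) is immediate: with $\overline{C}_{k},\overline{D}_{k}$ as in (\ref{eqcd}) and $\overline{X}_{k},\overline{U}_{k}$ as in (\ref{eq73}), the right-hand side of (\ref{eqplan}) is exactly $\overline{C}_{k}\overline{X}_{k}(t)+\overline{D}_{k}\overline{U}_{k}(t)$, and its left-hand side is $x(t+b)=\overline{Y}_{k}(t)$. For the state equation I would compute the $i$-th block of $\overline{A}_{k}\overline{X}_{k}(t)+\overline{B}_{k}\overline{U}_{k}(t)$ directly from (\ref{eqab1}), where the $(i,j)$ block of $\overline{A}_{k}$ equals $A^{j-i-1}B$ for $j>i$ (and $0$ otherwise) and the $i$-th block of $\overline{B}_{k}$ is $A^{k-i}$; that block comes out as $\sum_{\ell=1}^{k-i}A^{\ell-1}Bx(t-(i+\ell-1)a)+A^{k-i}x(t+b-ka)$, which by (\ref{eqplan}) applied with $t$ replaced by $t-ia$ and $k$ by $k-i$ equals $x(t+b-ia)$ (for $i=k$ the sum is empty and the block is $x(t+b-ka)=\overline{U}_{k}(t)$). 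Stacking these blocks over $i=1,\ldots,k$ reproduces $\overline{X}_{k}(t+b-a)$ as listed in (\ref{eq73}), which finishes the verification.

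As with Lemma \ref{lm2}, there is no genuine obstacle here; once (\ref{eqplan}) is in hand the argument is a routine computation. The only point requiring care is the index bookkeeping that aligns the block upper-triangular structure of $\overline{A}_{k}$ with the re-indexed iteration (\ref{eqplan}), and in particular checking that the forward shift by $b-a$ on the left of the state equation matches the sampling times appearing in $\overline{X}_{k}$; an incorrect choice of state partition would be exposed precisely at this step.
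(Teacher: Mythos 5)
Your proposal is correct and takes essentially the same route as the paper: your key identity $x(t+b)=\sum_{i=0}^{k-1}A^{i}Bx(t-ia)+A^{k}x(t+b-ka)$ is exactly the paper's relation (\ref{eqk0}) (the case $i=0$, shifted in time by $b$), obtained by the same unrolling of (\ref{sys}) along the $a$-direction, and the subsequent block-by-block matching with $\overline{A}_{k},\overline{B}_{k},\overline{C}_{k},\overline{D}_{k}$ is the same direct computation. The only cosmetic difference is that you prove a single identity by induction and then apply it at shifted times and orders, whereas the paper writes the whole family of identities for $i=0,1,\ldots,k-1$ at once.
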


\begin{proof}
It is straightforward to see that, for any $i=0,1,\ldots,k-1,$%
\begin{align}
x\left(  t-ia\right)  =  &  Bx\left(  t-ia-b\right)  +Ax\left(  t-\left(
i+1\right)  a\right) \nonumber \\
=  &  Bx\left(  t-ia-b\right)  +A\left(  Bx\left(  t-\left(  i+1\right)
a-b\right)  +Ax\left(  t-\left(  i+2\right)  a\right)  \right) \nonumber \\
=  &  Bx\left(  t-ia-b\right)  +ABx\left(  t-\left(  i+1\right)  a-b\right)
+A^{2}x\left(  t-\left(  i+2\right)  a\right) \nonumber \\
=  &  \cdots \nonumber \\
=  &  Bx\left(  t-ia-b\right)  +ABx\left(  t-\left(  i+1\right)  a-b\right)
+\cdots \nonumber \\
&  +A^{k-i-1}Bx\left(  t-\left(  k-1\right)  a-b\right)  +A^{k-i}x\left(
t-ka\right)  . \label{eqk0}%
\end{align}
For $i=1,2,\ldots,k-1,$ we write the above $k-1$ equations in the dense form%
\[
\left[
\begin{array}
[c]{c}%
x\left(  t-a\right) \\
x\left(  t-2a\right) \\
\vdots \\
x\left(  t-\left(  k-1\right)  a\right) \\
x\left(  t-ka\right)
\end{array}
\right]  =\left[
\begin{array}
[c]{ccccc}%
0 & B & AB & \cdots & A^{k-2}B\\
& 0 & B & \cdots & A^{k-3}B\\
&  & \ddots & \ddots & \vdots \\
&  &  & 0 & B\\
&  &  &  & 0
\end{array}
\right]  \left[
\begin{array}
[c]{c}%
x\left(  t-b\right) \\
x\left(  t-a-b\right) \\
\vdots \\
x\left(  t-\left(  k-2\right)  a-b\right) \\
x\left(  t-\left(  k-1\right)  a-b\right)
\end{array}
\right]  +\left[
\begin{array}
[c]{c}%
A^{k-1}\\
A^{k-2}\\
\vdots \\
A\\
I_{n}%
\end{array}
\right]  x\left(  t-ka\right)  ,
\]
which can be written as%
\begin{equation}
\overline{X}_{k}\left(  t-a\right)  =\overline{A}_{k}\overline{X}_{k}\left(
t-b\right)  +\overline{B}_{k}x\left(  t-ka\right)  , \label{eq86}%
\end{equation}
which is just the first equation (\ref{eq74}). On the other hand, with $i=0$
in (\ref{eqk0}), we have%
\begin{align*}
x\left(  t\right)   &  =Bx\left(  t-b\right)  +ABx\left(  t-a-b\right)
+\cdots+A^{k-1}Bx\left(  t-\left(  k-1\right)  a-b\right)  +A^{k}x\left(
t-ka\right) \\
&  =\left[
\begin{array}
[c]{cccc}%
B & AB & \cdots & A^{k-1}B
\end{array}
\right]  X_{k}\left(  t-b\right)  +A^{k}x\left(  t-ka\right) \\
&  =\overline{C}_{k}X_{k}\left(  t-b\right)  +\overline{D}_{k}x\left(
t-ka\right)  ,
\end{align*}
which is just the second equation in (\ref{eq74}). The proof is finished.
\end{proof}

We then can present for Lemma \ref{th2} a time-domain interpretation, which
parallels Proposition \ref{pp1}.

\begin{proposition}
For any integer $k\geq1,$\ let $\overline{\mathit{\Omega}}_{k}$ be defined in
(\ref{eqomegak}). Consider the following LKF
\begin{equation}
\overline{V}_{k}\left(  x_{t}\right)  =\int_{t-a}^{t}\overline{X}%
_{k}^{\mathrm{T}}\left(  s\right)  \overline{Q}_{k}\overline{X}_{k}\left(
s\right)  \mathrm{d}s+\int_{t-b}^{t-a}\overline{X}_{k}^{\mathrm{T}}\left(
s\right)  \overline{P}_{k}\overline{X}_{k}\left(  s\right)  \mathrm{d}s,
\label{eqv}%
\end{equation}
where $\overline{P}_{k}=\overline{P}_{k}^{\mathrm{T}}\in \mathbf{R}^{kn\times
kn}$ and $\overline{Q}_{k}=\overline{Q}_{k}^{\mathrm{T}}\in \mathbf{R}%
^{kn\times kn}.$ Then%
\begin{equation}
\dot{\overline{V}}_{k}\left(  x_{t}\right)  =\left[
\begin{array}
[c]{c}%
\overline{X}_{k}\left(  t-b\right) \\
x\left(  t-ka\right)
\end{array}
\right]  ^{\mathrm{T}}\overline{\mathit{\Omega}}_{k}\left(  \overline{P}%
_{k},\overline{Q}_{k}\right)  \left[
\begin{array}
[c]{c}%
\overline{X}_{k}\left(  t-b\right) \\
x\left(  t-ka\right)
\end{array}
\right]  . \label{eqdv}%
\end{equation}

\end{proposition}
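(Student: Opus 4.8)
The plan is to follow the proof of Proposition~\ref{pp1} essentially line for line, replacing the state-space realization of Lemma~\ref{lm2} by that of Lemma~\ref{lm10}, and supplementing the latter with one identity that is used implicitly but not recorded explicitly in Lemma~\ref{lm10}. Differentiating the functional~(\ref{eqv}) by the Leibniz rule (recall the standing assumption $b>a$, so both integrals are well defined) gives
\[
\dot{\overline{V}}_k(x_t)=\overline{X}_k^{\mathrm{T}}(t)\overline{Q}_k\overline{X}_k(t)-\overline{X}_k^{\mathrm{T}}(t-a)\overline{Q}_k\overline{X}_k(t-a)+\overline{X}_k^{\mathrm{T}}(t-a)\overline{P}_k\overline{X}_k(t-a)-\overline{X}_k^{\mathrm{T}}(t-b)\overline{P}_k\overline{X}_k(t-b).
\]
The goal is therefore to express $\overline{X}_k(t)$, $\overline{X}_k(t-a)$ and $\overline{X}_k(t-b)$ as linear maps of the single augmented vector $\xi(t)=[\overline{X}_k^{\mathrm{T}}(t-b),\,x^{\mathrm{T}}(t-ka)]^{\mathrm{T}}$.

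The middle relation is already available: it is exactly equation~(\ref{eq86}) from the proof of Lemma~\ref{lm10}, that is, $\overline{X}_k(t-a)=\overline{A}_k\overline{X}_k(t-b)+\overline{B}_k x(t-ka)=[\overline{A}_k,\overline{B}_k]\xi(t)$, and trivially $\overline{X}_k(t-b)=[I_{kn},0_{kn\times n}]\xi(t)$. For $\overline{X}_k(t)$ I would return to the componentwise identity~(\ref{eqk0}) and stack it now over $i=0,1,\ldots,k-1$ (the proof of Lemma~\ref{lm10} used $i=1,\ldots,k$, which produces $\overline{X}_k(t-a)$ instead). The coefficient of $x(t-\ell a-b)$ in the expansion of $x(t-ia)$ is $A^{\ell-i}B$ when $\ell\ge i$ and $0$ otherwise, while the coefficient of $x(t-ka)$ is $A^{k-i}$; these are precisely the band-triangular matrices $\overline{\mathscr{A}}_k$ and $\overline{\mathscr{B}}_k$ of~(\ref{eqab2}). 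Hence $\overline{X}_k(t)=\overline{\mathscr{A}}_k\overline{X}_k(t-b)+\overline{\mathscr{B}}_k x(t-ka)=[\overline{\mathscr{A}}_k,\overline{\mathscr{B}}_k]\xi(t)$.

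Substituting these three expressions into $\dot{\overline{V}}_k$ collapses it to the quadratic form $\xi^{\mathrm{T}}(t)M\xi(t)$ with
\[
M=[\overline{\mathscr{A}}_k,\overline{\mathscr{B}}_k]^{\mathrm{T}}\overline{Q}_k[\overline{\mathscr{A}}_k,\overline{\mathscr{B}}_k]+[\overline{A}_k,\overline{B}_k]^{\mathrm{T}}(\overline{P}_k-\overline{Q}_k)[\overline{A}_k,\overline{B}_k]-[I_{kn},0_{kn\times n}]^{\mathrm{T}}\overline{P}_k[I_{kn},0_{kn\times n}].
\]
Expanding each of the three terms into its $2\times2$ block structure and summing block by block reproduces exactly the four entries of $\overline{\mathit{\Omega}}_k(\overline{P}_k,\overline{Q}_k)$ in~(\ref{eqomegak}): the $(1,1)$ block becomes $\overline{A}_k^{\mathrm{T}}\overline{P}_k\overline{A}_k-\overline{P}_k+\overline{\mathscr{A}}_k^{\mathrm{T}}\overline{Q}_k\overline{\mathscr{A}}_k-\overline{A}_k^{\mathrm{T}}\overline{Q}_k\overline{A}_k$, and similarly for the $(1,2)$ and $(2,2)$ blocks. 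This establishes~(\ref{eqdv}).

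The only step that is not pure bookkeeping is the derivation of the $(\overline{\mathscr{A}}_k,\overline{\mathscr{B}}_k)$-realization of $\overline{X}_k(t)$: Lemma~\ref{lm10} supplies only its first block row, namely $x(t)=\overline{C}_k\overline{X}_k(t-b)+\overline{D}_k x(t-ka)$, so one has to re-stack~(\ref{eqk0}) for every $i=0,\ldots,k-1$ and verify that the resulting coefficient matrices coincide with~(\ref{eqab2}). Once that identity is in hand, the remainder is the same block-matrix computation as in the proof of Proposition~\ref{pp1}, and no further difficulty arises.
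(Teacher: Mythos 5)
Your proposal is correct and follows essentially the same route as the paper: the paper's proof likewise stacks the identity (\ref{eqk0}) over $i=0,\ldots,k-1$ to get $\overline{X}_{k}(t)=[\overline{\mathscr{A}}_{k},\overline{\mathscr{B}}_{k}]\,[\overline{X}_{k}^{\mathrm{T}}(t-b),x^{\mathrm{T}}(t-ka)]^{\mathrm{T}}$, combines it with (\ref{eq86}), and performs the same block-matrix bookkeeping to recover $\overline{\mathit{\Omega}}_{k}(\overline{P}_{k},\overline{Q}_{k})$. Your explicit remark that Lemma \ref{lm10} only records the first block row and that the full $(\overline{\mathscr{A}}_{k},\overline{\mathscr{B}}_{k})$-realization must be re-derived is exactly the step the paper carries out inside this proposition's proof, so there is no gap.
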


\begin{proof}
By using (\ref{eqk0}) and noting the structures of $\overline{\mathscr{A}}%
_{k}$ and $\overline{\mathscr{B}}_{k}$ in (\ref{eqab2}), we have%
\begin{align*}
\overline{X}_{k}\left(  t\right)   &  =\left[
\begin{array}
[c]{cccccc}%
B & AB & A^{2}B & \cdots & A^{k-1}B & A^{k}\\
& B & AB & \ddots & A^{k-2}B & A^{k-1}\\
&  & \ddots & \ddots & \vdots & \vdots \\
&  &  & B & AB & A^{2}\\
&  &  &  & B & A
\end{array}
\right]  \left[
\begin{array}
[c]{c}%
x\left(  t-b\right) \\
x\left(  t-a-b\right) \\
\vdots \\
x\left(  t-\left(  k-1\right)  a-b\right) \\
x\left(  t-ka\right)
\end{array}
\right] \\
&  =\left[
\begin{array}
[c]{cc}%
\overline{\mathscr{A}}_{k} & \overline{\mathscr{B}}_{k}%
\end{array}
\right]  \left[
\begin{array}
[c]{c}%
\overline{X}_{k}\left(  t-b\right) \\
x\left(  t-ka\right)
\end{array}
\right]  .
\end{align*}
Therefore, it follows from (\ref{eq86}) that
\begin{align*}
\dot{\overline{V}}_{k}\left(  x_{t}\right)  =  &  \overline{X}_{k}%
^{\mathrm{T}}\left(  t-a\right)  \overline{P}_{k}\overline{X}_{k}\left(
t-a\right)  -\overline{X}_{k}^{\mathrm{T}}\left(  t-b\right)  \overline{P}%
_{k}\overline{X}_{k}\left(  t-b\right) \\
&  +\overline{X}_{k}^{\mathrm{T}}\left(  t\right)  \overline{Q}_{k}%
\overline{X}_{k}\left(  t\right)  -\overline{X}_{k}^{\mathrm{T}}\left(
t-a\right)  \overline{Q}_{k}\overline{X}_{k}\left(  t-a\right) \\
=  &  \left(  \overline{A}_{k}\overline{X}_{k}\left(  t-b\right)
+\overline{B}_{k}x\left(  t-ka\right)  \right)  ^{\mathrm{T}}\left(
\overline{P}_{k}-\overline{Q}_{k}\right)  \left(  \overline{A}_{k}\overline
{X}_{k}\left(  t-b\right)  +\overline{B}_{k}x\left(  t-ka\right)  \right) \\
&  +\overline{X}_{k}^{\mathrm{T}}\left(  t\right)  \overline{Q}_{k}%
\overline{X}_{k}\left(  t\right)  -\overline{X}_{k}^{\mathrm{T}}\left(
t-b\right)  \overline{P}_{k}\overline{X}_{k}\left(  t-b\right) \\
=  &  \left[
\begin{array}
[c]{c}%
\overline{X}_{k}\left(  t-b\right) \\
x\left(  t-ka\right)
\end{array}
\right]  ^{\mathrm{T}}\left[
\begin{array}
[c]{cc}%
\overline{A}_{k} & \overline{B}_{k}%
\end{array}
\right]  ^{\mathrm{T}}\left(  \overline{P}_{k}-\overline{Q}_{k}\right)
\left[
\begin{array}
[c]{cc}%
\overline{A}_{k} & \overline{B}_{k}%
\end{array}
\right]  \left[
\begin{array}
[c]{c}%
\overline{X}_{k}\left(  t-b\right) \\
x\left(  t-ka\right)
\end{array}
\right] \\
&  -\left[
\begin{array}
[c]{c}%
\overline{X}_{k}\left(  t-b\right) \\
x\left(  t-ka\right)
\end{array}
\right]  ^{\mathrm{T}}\left[
\begin{array}
[c]{cc}%
I_{kn} & 0_{kn\times n}%
\end{array}
\right]  ^{\mathrm{T}}\overline{P}_{k}\left[
\begin{array}
[c]{cc}%
I_{kn} & 0_{kn\times n}%
\end{array}
\right]  \left[
\begin{array}
[c]{c}%
\overline{X}_{k}\left(  t-b\right) \\
x\left(  t-ka\right)
\end{array}
\right] \\
&  +\left[
\begin{array}
[c]{c}%
\overline{X}_{k}\left(  t-b\right) \\
x\left(  t-ka\right)
\end{array}
\right]  ^{\mathrm{T}}\left[
\begin{array}
[c]{cc}%
\overline{\mathscr{A}}_{k} & \overline{\mathscr{B}}_{k}%
\end{array}
\right]  ^{\mathrm{T}}\overline{Q}_{k}\left[
\begin{array}
[c]{cc}%
\overline{\mathscr{A}}_{k} & \overline{\mathscr{B}}_{k}%
\end{array}
\right]  \left[
\begin{array}
[c]{c}%
\overline{X}_{k}\left(  t-b\right) \\
x\left(  t-ka\right)
\end{array}
\right] \\
=  &  \left[
\begin{array}
[c]{c}%
\overline{X}_{k}\left(  t-b\right) \\
x\left(  t-ka\right)
\end{array}
\right]  ^{\mathrm{T}}\overline{\mathit{\Omega}}_{k}\left(  \overline{P}%
_{k},\overline{Q}_{k}\right)  \left[
\begin{array}
[c]{c}%
\overline{X}_{k}\left(  t-b\right) \\
x\left(  t-ka\right)
\end{array}
\right]  ,
\end{align*}
which completes the proof.
\end{proof}

One may wonder the relationship between Theorem \ref{th4} and Lemma \ref{th2}.
Such a relationship should be revealed from the time-domain interpretations of
these two LMIs. To investigate this problem, we need to find the relationship
between $\overline{\mathit{\Omega}}_{k}$ and $\mathit{\Omega}_{k}.$ Such a
relationship should be revealed from the time-domain interpretations of these
two LMIs, say, the relationship between $X_{k}\left(  t\right)  $ and
$\overline{X}_{k}\left(  t\right)  ,$ and the relationship between
\[
\left[
\begin{array}
[c]{c}%
X_{k}\left(  t-b\right)  \\
x\left(  t-b-ka\right)
\end{array}
\right]  \text{ and }\left[
\begin{array}
[c]{c}%
\overline{X}_{k}\left(  t-b\right)  \\
x\left(  t-ka\right)
\end{array}
\right]  .
\]
To this end, we denote, for any integer $k\geq1,$
\[
W_{k}=\left[
\begin{array}
[c]{cccc}%
B^{\left[  k-1\right]  } & B^{\left[  k-2\right]  }A^{\left[  1\right]  } &
\cdots & A^{\left[  k-1\right]  }\\
& \ddots & \ddots & \vdots \\
&  & B^{\left[  1\right]  } & A^{\left[  1\right]  }\\
&  &  & I_{n}%
\end{array}
\right]  ,T_{k}=\left[
\begin{array}
[c]{cc}%
W_{k} & \\
& I_{n}%
\end{array}
\right]  .
\]
Then we have the following result.

\begin{proposition}
Let $\mathit{\Omega}_{k}\left(  P_{k},Q_{k}\right)  $ and $\overline
{\mathit{\Omega}}_{k}\left(  \overline{P}_{k},\overline{Q}_{k}\right)  $ be
defined respectively in (\ref{eq77b}) and (\ref{eqomegak}). Let%
\begin{equation}
P_{k}=W_{k}^{\mathrm{T}}\overline{P}_{k}W_{k},\;Q_{k}=W_{k}^{\mathrm{T}%
}\overline{Q}_{k}W_{k}. \label{eq45}%
\end{equation}
Then there holds%
\begin{equation}
\mathit{\Omega}_{k}\left(  P_{k},Q_{k}\right)  =T_{k}^{\mathrm{T}}%
\overline{\mathit{\Omega}}_{k}\left(  \overline{P}_{k},\overline{Q}%
_{k}\right)  T_{k}. \label{eq46}%
\end{equation}
Therefore, the LMI in (\ref{eqz}) is feasible if and only if the LMI in
(\ref{eq55}) is feasible.
\end{proposition}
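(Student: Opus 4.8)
The plan is to prove the algebraic identity (\ref{eq46}) first, by exhibiting an ``intertwining'' between the building blocks of $\mathit{\Omega}_{k}$ and those of $\overline{\mathit{\Omega}}_{k}$, and then to read off the feasibility equivalence. As a preliminary step I would rewrite $\overline{\mathit{\Omega}}_{k}(\overline{P},\overline{Q})$ in exactly the block-compact form used for $\mathit{\Omega}_{k}$ in (\ref{eq77b}), namely
\[
\overline{\mathit{\Omega}}_{k}(\overline{P},\overline{Q})=[\overline{A}_{k},\overline{B}_{k}]^{\mathrm{T}}(\overline{P}-\overline{Q})[\overline{A}_{k},\overline{B}_{k}]-L_{k}^{\mathrm{T}}\overline{P}L_{k}+[\overline{\mathscr{A}}_{k},\overline{\mathscr{B}}_{k}]^{\mathrm{T}}\overline{Q}[\overline{\mathscr{A}}_{k},\overline{\mathscr{B}}_{k}],
\]
which is a routine re-expansion of (\ref{eqomegak}), with $L_{k}$ as in (\ref{eqlk}). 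Thus $\overline{\mathit{\Omega}}_{k}$ has precisely the same shape as $\mathit{\Omega}_{k}$, only with $(A_{k},B_{k},\mathscr{A}_{k},\mathscr{B}_{k})$ replaced by $(\overline{A}_{k},\overline{B}_{k},\overline{\mathscr{A}}_{k},\overline{\mathscr{B}}_{k})$.

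The heart of the matter is the pair of matrix identities
\[
W_{k}[A_{k},B_{k}]=[\overline{A}_{k},\overline{B}_{k}]T_{k},\qquad W_{k}[\mathscr{A}_{k},\mathscr{B}_{k}]=[\overline{\mathscr{A}}_{k},\overline{\mathscr{B}}_{k}]T_{k},
\]
together with the trivial $W_{k}L_{k}=L_{k}T_{k}$ (both sides are $[W_{k},0_{kn\times n}]$). Equivalently, $W_{k}A_{k}=\overline{A}_{k}W_{k}$, $W_{k}B_{k}=\overline{B}_{k}$, $W_{k}\mathscr{A}_{k}=\overline{\mathscr{A}}_{k}W_{k}$, $W_{k}\mathscr{B}_{k}=\overline{\mathscr{B}}_{k}$. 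I would verify these by direct block multiplication; in fact $W_{k}[A_{k},B_{k}]$ and $W_{k}[\mathscr{A}_{k},\mathscr{B}_{k}]$ have already been evaluated in the proof of Theorem \ref{th4} (through the matrices $U_{k},V_{k},C_{k},D_{k}$ there), so one only needs to check that those expressions coincide with $[\overline{A}_{k}W_{k},\overline{B}_{k}]$ and $[\overline{\mathscr{A}}_{k}W_{k},\overline{\mathscr{B}}_{k}]$. Since $A^{[i]}=A^{i}$ and $B^{[i]}=B^{i}$, the identities $W_{k}B_{k}=\overline{B}_{k}=[A^{k-1};\dots;A;I_{n}]$ and $W_{k}\mathscr{B}_{k}=\overline{\mathscr{B}}_{k}=[A^{k};\dots;A]$ are immediate. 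The one genuinely combinatorial point is $W_{k}\mathscr{A}_{k}=\overline{\mathscr{A}}_{k}W_{k}$ (and, the same way, $W_{k}A_{k}=\overline{A}_{k}W_{k}$): a generic block of $\overline{\mathscr{A}}_{k}W_{k}$ is a sum $\sum_{l}(A^{l-i}B)\,B^{[k-j]}A^{[j-l]}$, and this collapses to the single symbolic term $B^{[k-j+1]}A^{[j-i]}$ because every word in $A,B$ with a prescribed number of each letter decomposes uniquely as $A^{(\text{leading }A\text{'s})}\,B\,(\text{rest})$ --- this is the same bookkeeping (the Pascal-type identity (\ref{eq87})) that already underlies Lemma \ref{lm5}.

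Given the intertwining identities, (\ref{eq46}) is pure substitution: inserting $P_{k}=W_{k}^{\mathrm{T}}\overline{P}_{k}W_{k}$ and $Q_{k}=W_{k}^{\mathrm{T}}\overline{Q}_{k}W_{k}$ into (\ref{eq77b}) and pulling $W_{k}$ through each of the three summands turns $[A_{k},B_{k}]^{\mathrm{T}}(P_{k}-Q_{k})[A_{k},B_{k}]$ into $T_{k}^{\mathrm{T}}[\overline{A}_{k},\overline{B}_{k}]^{\mathrm{T}}(\overline{P}_{k}-\overline{Q}_{k})[\overline{A}_{k},\overline{B}_{k}]T_{k}$, turns $L_{k}^{\mathrm{T}}P_{k}L_{k}$ into $T_{k}^{\mathrm{T}}L_{k}^{\mathrm{T}}\overline{P}_{k}L_{k}T_{k}$, and turns $[\mathscr{A}_{k},\mathscr{B}_{k}]^{\mathrm{T}}Q_{k}[\mathscr{A}_{k},\mathscr{B}_{k}]$ into $T_{k}^{\mathrm{T}}[\overline{\mathscr{A}}_{k},\overline{\mathscr{B}}_{k}]^{\mathrm{T}}\overline{Q}_{k}[\overline{\mathscr{A}}_{k},\overline{\mathscr{B}}_{k}]T_{k}$; summing and invoking the block-compact form of $\overline{\mathit{\Omega}}_{k}$ above yields $\mathit{\Omega}_{k}(P_{k},Q_{k})=T_{k}^{\mathrm{T}}\overline{\mathit{\Omega}}_{k}(\overline{P}_{k},\overline{Q}_{k})T_{k}$.

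For the feasibility claim, observe that $W_{k}$ is block upper triangular with diagonal blocks $B^{k-1},\dots,B,I_{n}$, so $W_{k}$ (hence $T_{k}$) is nonsingular exactly when $B$ is nonsingular. When $B$ is nonsingular, (\ref{eq46}) gives the equivalence directly in both directions: from a solution $\overline{P}_{k},\overline{Q}_{k}>0$ of (\ref{eqz}) one gets $P_{k}=W_{k}^{\mathrm{T}}\overline{P}_{k}W_{k}>0$, $Q_{k}=W_{k}^{\mathrm{T}}\overline{Q}_{k}W_{k}>0$ with $\mathit{\Omega}_{k}(P_{k},Q_{k})=T_{k}^{\mathrm{T}}\overline{\mathit{\Omega}}_{k}T_{k}<0$, and conversely one inverts this using $\overline{P}_{k}=W_{k}^{-\mathrm{T}}P_{k}W_{k}^{-1}$, $\overline{Q}_{k}=W_{k}^{-\mathrm{T}}Q_{k}W_{k}^{-1}$. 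For the remaining singular-$B$ situation I would fall back on the stability characterisations already established: a feasible solution of either LMI certifies that (\ref{sys}) is strongly stable --- equivalently that the 2-D system (\ref{eq80}) is stable --- whereupon the converse parts of Theorem \ref{th4} and of Lemma \ref{th2} guarantee that the other LMI family is feasible (for all sufficiently large $k$); so each family is feasible iff this common stability property holds. I expect this singular-$B$ bookkeeping, rather than the identity (\ref{eq46}) itself, to be the only delicate point; alternatively one could perturb $\overline{Q}_{k}$ (and $\overline{P}_{k}$) by a small multiple of the identity, exactly as in the last step of the proof of Theorem \ref{th4}, to keep the argument constructive.
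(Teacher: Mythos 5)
Your proposal is correct in substance, but it reaches (\ref{eq46}) by a genuinely different route than the paper. You prove the intertwining identities $W_{k}A_{k}=\overline{A}_{k}W_{k}$, $W_{k}B_{k}=\overline{B}_{k}$, $W_{k}\mathscr{A}_{k}=\overline{\mathscr{A}}_{k}W_{k}$, $W_{k}\mathscr{B}_{k}=\overline{\mathscr{B}}_{k}$ (equivalently $W_{k}[A_{k},B_{k}]=[\overline{A}_{k},\overline{B}_{k}]T_{k}$ and $W_{k}[\mathscr{A}_{k},\mathscr{B}_{k}]=[\overline{\mathscr{A}}_{k},\overline{\mathscr{B}}_{k}]T_{k}$, plus $W_{k}L_{k}=L_{k}T_{k}$) directly from the shuffle-product identity (\ref{eq87}) via the unique ``leading powers of $A$, then a $B$'' factorisation of each word, rewrite $\overline{\mathit{\Omega}}_{k}$ in the block-compact form of (\ref{eq77b}), and then (\ref{eq46}) is literal substitution; these identities all check out. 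The paper instead derives (\ref{eq46}) from the time-domain interpretation: it uses Lemma \ref{lm10} to get $\overline{X}_{k}(t-b)=W_{k}X_{k}(t)$, hence the stacked vectors are related by $T_{k}$, and compares the shifted functional $\overline{V}_{k}(x_{t-b})$ and its derivative, (\ref{eq42})--(\ref{eq43}), with (\ref{eqv1})--(\ref{eqdv1}). Your computation is self-contained and gives the matrix identity without passing through trajectories (the paper's comparison of quadratic forms tacitly needs the stacked state to be arbitrary), while the paper's route reuses the LKF machinery it has already built and makes the Lyapunov-functional correspondence transparent. Concerning the feasibility conclusion, both arguments rest on (\ref{eq46}); you are in fact more careful than the paper in noting that $T_{k}$ is invertible exactly when $B$ is, so (\ref{eq45}) transports positive definiteness and strict negativity only in that case. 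Your fallback for singular $B$ through the converse parts of Theorem \ref{th4} and Lemma \ref{th2} only yields feasibility of the other LMI for sufficiently large $k$, i.e.\ a slightly weaker statement than same-$k$ equivalence, and the suggested $\varepsilon$-perturbation of $(\overline{P}_{k},\overline{Q}_{k})$ does not obviously remove the degeneracy of $T_{k}^{\mathrm{T}}\overline{\mathit{\Omega}}_{k}T_{k}$ on $\ker T_{k}$; but the paper's proof simply asserts the equivalence and shares exactly this gap, so your treatment loses nothing relative to the published argument and is more explicit about where the delicate point lies.
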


\begin{proof}
By using Lemma \ref{lm10} we have%
\begin{align*}
\overline{X}_{k}\left(  t-b\right)   &  =\left[
\begin{array}
[c]{c}%
x\left(  t-b\right)  \\
x\left(  t-a-b\right)  \\
\vdots \\
x\left(  t-\left(  k-1\right)  a-b\right)
\end{array}
\right]  \\
&  =\left[
\begin{array}
[c]{cccc}%
B^{\left[  k-1\right]  } & B^{\left[  k-2\right]  }A^{\left[  1\right]  } &
\cdots & A^{\left[  k-1\right]  }\\
& \ddots & \ddots & \vdots \\
&  & B^{\left[  1\right]  } & A^{\left[  1\right]  }\\
&  &  & I_{n}%
\end{array}
\right]  \left[
\begin{array}
[c]{c}%
x\left(  t-kb\right)  \\
x\left(  t-\left(  k-1\right)  b-a\right)  \\
\vdots \\
x\left(  t-2b-\left(  k-2\right)  a\right)  \\
x\left(  t-b-\left(  k-1\right)  a\right)
\end{array}
\right]  \\
&  =W_{k}X_{k}\left(  t\right)  ,
\end{align*}
from which we get%
\begin{align*}
\left[
\begin{array}
[c]{c}%
\overline{X}_{k}\left(  t-2b\right)  \\
x\left(  t-b-ka\right)
\end{array}
\right]   &  =\left[
\begin{array}
[c]{c}%
W_{k}X_{k}\left(  t-b\right)  \\
x\left(  t-b-ka\right)
\end{array}
\right]  \\
&  =\left[
\begin{array}
[c]{cc}%
W_{k} & 0\\
0 & I_{n}%
\end{array}
\right]  \left[
\begin{array}
[c]{c}%
X_{k}\left(  t-b\right)  \\
x\left(  t-b-ka\right)
\end{array}
\right]  \\
&  =T_{k}\left[
\begin{array}
[c]{c}%
X_{k}\left(  t-b\right)  \\
x\left(  t-b-ka\right)
\end{array}
\right]  .
\end{align*}
Therefore, we have from (\ref{eqv}) that%
\begin{align}
\overline{V}_{k}\left(  x_{t-b}\right)   &  =\int_{t-a}^{t}\overline{X}%
_{k}^{\mathrm{T}}\left(  s-b\right)  \overline{Q}_{k}\overline{X}_{k}\left(
s-b\right)  \mathrm{d}s+\int_{t-b}^{t-a}\overline{X}_{k}^{\mathrm{T}}\left(
s-b\right)  \overline{P}_{k}\overline{X}_{k}\left(  s-b\right)  \mathrm{d}%
s,\nonumber \\
&  =\int_{t-a}^{t}X_{k}^{\mathrm{T}}\left(  s\right)  W_{k}^{\mathrm{T}%
}\overline{Q}_{k}W_{k}X_{k}\left(  s\right)  \mathrm{d}s+\int_{t-b}^{t-a}%
X_{k}^{\mathrm{T}}\left(  s\right)  W_{k}^{\mathrm{T}}\overline{P}_{k}%
W_{k}X_{k}\left(  s\right)  \mathrm{d}s,\label{eq42}%
\end{align}
and from (\ref{eqdv}) that%
\begin{align}
\dot{\overline{V}}_{k}\left(  x_{t-b}\right)   &  =\left[
\begin{array}
[c]{c}%
\overline{X}_{k}\left(  t-2b\right)  \\
x\left(  t-b-ka\right)
\end{array}
\right]  ^{\mathrm{T}}\overline{\mathit{\Omega}}_{k}\left(  \overline{P}%
_{k},\overline{Q}_{k}\right)  \left[
\begin{array}
[c]{c}%
\overline{X}_{k}\left(  t-2b\right)  \\
x\left(  t-b-ka\right)
\end{array}
\right]  \nonumber \\
&  =\left[
\begin{array}
[c]{c}%
X_{k}\left(  t-b\right)  \\
x\left(  t-b-ka\right)
\end{array}
\right]  ^{\mathrm{T}}T_{k}^{\mathrm{T}}\overline{\mathit{\Omega}}_{k}\left(
\overline{P}_{k},\overline{Q}_{k}\right)  T_{k}\left[
\begin{array}
[c]{c}%
X_{k}\left(  t-b\right)  \\
x\left(  t-b-ka\right)
\end{array}
\right]  .\label{eq43}%
\end{align}
By comparing (\ref{eq42}) and (\ref{eq43}) with (\ref{eqv1}) and (\ref{eqdv1})
we know that, if $\left(  P_{k},Q_{k}\right)  $ satisfies (\ref{eq45}), then
$\mathit{\Omega}_{k}$ and $\overline{\mathit{\Omega}}_{k}$ satisfies
(\ref{eq46}). The proof is finished.
\end{proof}

It follows that Theorem \ref{th4} is equivalent to Lemma \ref{th2}. Even so,
Theorem \ref{th4} possesses great advantage over Lemma \ref{th2} since the
system parameters appear linearly (quadratically) in the LMIs (\ref{eq55}),
which has been very important in the robust stability analysis. We next show
the connection to the Carvalho Condition.

\begin{lemma}
\label{lm3} \cite{carvalho96laa} The linear difference equation (\ref{sys}) is
exponentially stable if there exist two positive definite matrices $X_{1}%
\in \mathbf{R}^{n\times n}$ and $Y_{1}\in \mathbf{R}^{n\times n}$ such that the
following LMI is satisfied%
\begin{equation}
\mathit{\Phi}_{1}\left(  X_{1},Y_{1}\right)  =\left[
\begin{array}
[c]{cc}%
A & B\\
I_{n} & 0
\end{array}
\right]  ^{\mathrm{T}}\left[
\begin{array}
[c]{cc}%
X_{1} & 0\\
0 & Y_{1}%
\end{array}
\right]  \left[
\begin{array}
[c]{cc}%
A & B\\
I_{n} & 0
\end{array}
\right]  -\left[
\begin{array}
[c]{cc}%
X_{1} & 0\\
0 & Y_{1}%
\end{array}
\right]  <0. \label{eq6}%
\end{equation}

\end{lemma}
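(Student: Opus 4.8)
The plan is to obtain Lemma \ref{lm3} as a direct corollary of Theorem \ref{th4} with $k=1$, modulo a symmetric permutation of the $2n\times 2n$ block structure.

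First I would specialize the data of Section \ref{sec2} to $k=1$. From (\ref{eqab3}), (\ref{eqlk}) and (\ref{eqab4}) one has $A_{1}=0_{n\times n}$ (the block $I_{(k-1)n}$ being empty), $B_{1}=I_{n}$, $L_{1}=[\,I_{n}\ \,0_{n\times n}\,]$, $\mathscr{A}_{1}=B$ and $\mathscr{B}_{1}=A$. Substituting these into (\ref{eq77b}) gives
\[
\mathit{\Omega}_{1}\left(P,Q\right)=\left[\begin{array}[c]{cc} B^{\mathrm{T}}QB-P & B^{\mathrm{T}}QA\\ A^{\mathrm{T}}QB & A^{\mathrm{T}}QA+P-Q\end{array}\right].
\]
Second I would expand the right-hand side of (\ref{eq6}) explicitly,
\[
\mathit{\Phi}_{1}\left(X_{1},Y_{1}\right)=\left[\begin{array}[c]{cc} A^{\mathrm{T}}X_{1}A+Y_{1}-X_{1} & A^{\mathrm{T}}X_{1}B\\ B^{\mathrm{T}}X_{1}A & B^{\mathrm{T}}X_{1}B-Y_{1}\end{array}\right],
\]
and then observe that, with the (symmetric, orthogonal) permutation matrix $\mathit{\Pi}=\left[\begin{array}[c]{cc}0 & I_{n}\\ I_{n} & 0\end{array}\right]$, one has the identity $\mathit{\Phi}_{1}(X_{1},Y_{1})=\mathit{\Pi}^{\mathrm{T}}\mathit{\Omega}_{1}(Y_{1},X_{1})\mathit{\Pi}$ (conjugation by $\mathit{\Pi}$ simply swaps both block rows and block columns). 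Consequently $\mathit{\Phi}_{1}(X_{1},Y_{1})<0$ holds if and only if $\mathit{\Omega}_{1}(Y_{1},X_{1})<0$ holds.

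Finally, under the hypothesis there exist $X_{1}>0$ and $Y_{1}>0$ with $\mathit{\Phi}_{1}(X_{1},Y_{1})<0$; setting $P_{1}:=Y_{1}>0$ and $Q_{1}:=X_{1}>0$ we then have $\mathit{\Omega}_{1}(P_{1},Q_{1})<0$, so the sufficiency part of Theorem \ref{th4} (applied with $k=1$) yields that (\ref{sys}) is strongly stable, hence exponentially stable (its spectral abscissa being negative). I do not expect any genuine obstacle: the only points requiring care are the correct (degenerate) specialization of the $k$-indexed matrices $A_{k},B_{k},L_{k},\mathscr{A}_{k},\mathscr{B}_{k}$ to $k=1$ and the matching of the pair $(X_{1},Y_{1})$ in (\ref{eq6}) with the pair $(Q_{1},P_{1})$ in (\ref{eq55}).

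Alternatively, and more in keeping with the spirit of Section \ref{sec3}, one may give a self-contained time-domain proof using the Lyapunov--Krasovskii functional $V(x_{t})=\int_{t-a}^{t}x^{\mathrm{T}}(s)X_{1}x(s)\,\mathrm{d}s+\int_{t-b}^{t-a}x^{\mathrm{T}}(s)Y_{1}x(s)\,\mathrm{d}s$, whose derivative along (\ref{sys}) equals $\xi(t)^{\mathrm{T}}\mathit{\Phi}_{1}(X_{1},Y_{1})\xi(t)$ with $\xi(t)=[x^{\mathrm{T}}(t-a),\,x^{\mathrm{T}}(t-b)]^{\mathrm{T}}$; the remaining step would then be the standard Krasovskii-type estimate converting $\mathit{\Phi}_{1}<0$ into exponential decay, which is where the only real (and routine) effort would lie.
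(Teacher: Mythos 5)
Your proposal is correct, but your primary route is genuinely different from the paper's own proof of Lemma \ref{lm3}. The paper proves the lemma directly in the time domain: it introduces the functional $W_{1}(x_{t})=\int_{t-a}^{t}x^{\mathrm{T}}(s)X_{1}x(s)\,\mathrm{d}s+\int_{t-b}^{t-a}x^{\mathrm{T}}(s)Y_{1}x(s)\,\mathrm{d}s$, shows $\dot{W}_{1}(x_{t})=\xi(t)^{\mathrm{T}}\mathit{\Phi}_{1}(X_{1},Y_{1})\xi(t)$ with $\xi(t)=[x^{\mathrm{T}}(t-a),x^{\mathrm{T}}(t-b)]^{\mathrm{T}}$, and invokes the Lyapunov stability theorem of \cite{carvalho96laa} — i.e.\ exactly the ``alternative'' you sketch in your last paragraph (the paper, like you, does not spell out the Krasovskii-type decay estimate but delegates it to \cite{carvalho96laa}). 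Your main argument instead specializes Theorem \ref{th4} to $k=1$: the identifications $A_{1}=0$, $B_{1}=I_{n}$, $L_{1}=[I_{n}\ 0]$, $\mathscr{A}_{1}=B$, $\mathscr{B}_{1}=A$ are the correct degenerate cases of (\ref{eqab3})--(\ref{eqab4}), your expansions of $\mathit{\Omega}_{1}$ and $\mathit{\Phi}_{1}$ are right, and the permutation identity $\mathit{\Phi}_{1}(X_{1},Y_{1})=\mathit{\Pi}^{\mathrm{T}}\mathit{\Omega}_{1}(Y_{1},X_{1})\mathit{\Pi}$ is precisely relation (\ref{eq99a}) that the paper records later in Lemma \ref{lm3a} (there proved by matching the two LKFs rather than by direct block computation). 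So your reduction is legitimate and non-circular, since Theorem \ref{th4} is established independently of Lemma \ref{lm3}; it buys brevity and makes the ``special case of Theorem \ref{th4}'' relationship explicit at once, at the cost of invoking the heavier frequency-domain machinery of Theorem \ref{th4} plus the standard implication that strong stability yields exponential stability for the nominal delays (an implication the paper also uses freely, e.g.\ in Theorem \ref{th5}). What the paper's self-contained LKF proof buys instead is the explicit functional $W_{1}$ and the derivative identity (\ref{eq56}), which are reused verbatim in the proof of Lemma \ref{lm3a}; this is why the paper proves the lemma ``for future use'' in that form.
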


\begin{proof}
For future use, we give a simple proof here. Choose the following LK
functional%
\begin{equation}
W_{1}\left(  x_{t}\right)  =\int_{t-a}^{t}x^{\mathrm{T}}\left(  s\right)
X_{1}x\left(  s\right)  \mathrm{d}s+\int_{t-b}^{t-a}x^{\mathrm{T}}\left(
s\right)  Y_{1}x\left(  s\right)  \mathrm{d}s, \label{eq53}%
\end{equation}
which is such that%
\begin{align}
\dot{W}_{1}\left(  x_{t}\right)   &  =x^{\mathrm{T}}\left(  t\right)
X_{1}x\left(  t\right)  -x^{\mathrm{T}}\left(  t-a\right)  X_{1}x\left(
t-a\right)  +x^{\mathrm{T}}\left(  t-a\right)  Y_{1}x\left(  t-a\right)
-x^{\mathrm{T}}\left(  t-b\right)  Y_{1}x\left(  t-b\right) \nonumber \\
&  =\left[
\begin{array}
[c]{c}%
x\left(  t-a\right) \\
x\left(  t-b\right)
\end{array}
\right]  ^{\mathrm{T}}\mathit{\Phi}_{1}\left(  X_{1},Y_{1}\right)  \left[
\begin{array}
[c]{c}%
x\left(  t-a\right) \\
x\left(  t-b\right)
\end{array}
\right]  . \label{eq56}%
\end{align}
Since $\mathit{\Phi}_{1}\left(  X_{1},Y_{1}\right)  <0,$ the stability follows
from the Lyapunov stability theorem \cite{carvalho96laa}.
\end{proof}

If we set $k=1$ in Theorem \ref{th4} and denote%
\[
E_{2}=\left[
\begin{array}
[c]{cc}%
0 & I_{n}\\
I_{n} & 0
\end{array}
\right]  ,E_{3}=\left[
\begin{array}
[c]{ccc}%
0 & 0 & I_{n}\\
0 & I_{n} & 0\\
I_{n} & 0 & 0
\end{array}
\right]  ,
\]
we obtain the following result.

\begin{lemma}
\label{lm3a}Let $\mathit{\Omega}_{k}$ be defined in (\ref{eq77b}),
$\overline{\mathit{\Omega}}_{k}$ be defined in (\ref{eqomegak}) and
$\mathit{\Phi}_{1}$ be defined in (\ref{eq6}). Then, for $k=1$, there holds%
\begin{align}
\mathit{\Omega}_{1}\left(  P_{1},Q_{1}\right)   &  =E_{2}^{\mathrm{T}%
}\mathit{\Phi}_{1}\left(  Q_{1},P_{1}\right)  E_{2},\label{eq99a}\\
\overline{\mathit{\Omega}}_{1}\left(  \overline{P}_{1},\overline{Q}%
_{1}\right)   &  =E_{2}^{\mathrm{T}}\mathit{\Phi}_{1}\left(  \overline{Q}%
_{1},\overline{P}_{1}\right)  E_{2}. \label{eq99b}%
\end{align}
Thus the result in Lemma \ref{lm3} \cite{carvalho96laa} is a special case of
Lemma \ref{th2} and Theorem \ref{th4}.
\end{lemma}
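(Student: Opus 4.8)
The plan is to specialize every matrix occurring in $\mathit{\Omega}_k$, $\overline{\mathit{\Omega}}_k$ and $\mathit{\Phi}_1$ to $k=1$ and to verify the two identities by a direct block multiplication; no deep idea is involved. First I would record the degenerate $k=1$ data: from (\ref{eqab3}), (\ref{eqlk}) and (\ref{eqab4}) one has $A_1=0_{n\times n}$, $B_1=I_n$, $L_1=[I_n,\,0_{n\times n}]$, $\mathscr{A}_1=B$ and $\mathscr{B}_1=A$, while from (\ref{eqab1}) and (\ref{eqab2}) one has $\overline{A}_1=0_{n\times n}$, $\overline{B}_1=I_n$, $\overline{\mathscr{A}}_1=B$ and $\overline{\mathscr{B}}_1=A$. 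In particular the two sets of data coincide, so $\mathit{\Omega}_1$ and $\overline{\mathit{\Omega}}_1$ are the same function of their two $n\times n$ symmetric matrix arguments; hence it suffices to establish (\ref{eq99a}), and (\ref{eq99b}) then follows by renaming $(P_1,Q_1)$ as $(\overline{P}_1,\overline{Q}_1)$.

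Next I would substitute the data into (\ref{eq77b}). Since $[A_1,\,B_1]=[0,\,I_n]$ and $L_1=[I_n,\,0]$, the term $\mathit{\Omega}_{11}(P_1,Q_1)$ equals the block-diagonal matrix $\mathrm{diag}(-P_1,\,P_1-Q_1)$, and since $[\mathscr{A}_1,\,\mathscr{B}_1]=[B,\,A]$ the remaining term is the $2\times2$ block matrix with blocks $B^{\mathrm{T}}Q_1B$, $B^{\mathrm{T}}Q_1A$, $A^{\mathrm{T}}Q_1B$, $A^{\mathrm{T}}Q_1A$. Adding them gives
\[
\mathit{\Omega}_1(P_1,Q_1)=\left[\begin{array}{cc} B^{\mathrm{T}}Q_1B-P_1 & B^{\mathrm{T}}Q_1A \\ A^{\mathrm{T}}Q_1B & A^{\mathrm{T}}Q_1A+P_1-Q_1 \end{array}\right].
\]
Expanding (\ref{eq6}) directly gives
\[
\mathit{\Phi}_1(X_1,Y_1)=\left[\begin{array}{cc} A^{\mathrm{T}}X_1A+Y_1-X_1 & A^{\mathrm{T}}X_1B \\ B^{\mathrm{T}}X_1A & B^{\mathrm{T}}X_1B-Y_1 \end{array}\right].
\]
Now $E_2$ is the block permutation interchanging the two $n\times n$ coordinate blocks, and $E_2^{\mathrm{T}}=E_2$; conjugating a block $2\times2$ matrix by $E_2$ therefore swaps its $(1,1)$ and $(2,2)$ blocks and swaps its $(1,2)$ and $(2,1)$ blocks. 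Applying this to $\mathit{\Phi}_1(Q_1,P_1)$ (the second display with $X_1=Q_1$, $Y_1=P_1$) reproduces exactly the right-hand side of the first display, which is (\ref{eq99a}); the same computation with $(P_1,Q_1)$ replaced by $(\overline{P}_1,\overline{Q}_1)$ and using $\overline{\mathit{\Omega}}_1=\mathit{\Omega}_1$ yields (\ref{eq99b}).

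Finally, the ``special case'' assertion follows because $E_2$ is nonsingular: if $\mathit{\Phi}_1(X_1,Y_1)<0$ holds for some symmetric $X_1,Y_1>0$ (the Carvalho condition of Lemma \ref{lm3}), then taking $P_1=Y_1>0$ and $Q_1=X_1>0$ gives, by (\ref{eq99a}), $\mathit{\Omega}_1(P_1,Q_1)=E_2^{\mathrm{T}}\mathit{\Phi}_1(X_1,Y_1)E_2<0$, so the LMI (\ref{eq55}) is feasible with $k=1$ and Theorem \ref{th4} yields strong/exponential stability; likewise, by (\ref{eq99b}), the same matrices $\overline{P}_1=Y_1$, $\overline{Q}_1=X_1$ satisfy $\overline{\mathit{\Omega}}_1(\overline{P}_1,\overline{Q}_1)<0$, so the LMI (\ref{eqz}) is feasible with $k=1$ and Lemma \ref{th2} applies. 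I expect the only point needing care to be the bookkeeping of the degenerate blocks ($I_{(k-1)n}$ and the empty leading rows) when $k=1$, together with keeping straight the role reversal $X_1\leftrightarrow Q_1$, $Y_1\leftrightarrow P_1$; once these are pinned down the verification is a one-line block computation.
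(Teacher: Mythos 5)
Your proof is correct, but it takes a different route from the paper. The paper does not verify \eqref{eq99a}--\eqref{eq99b} by multiplying out blocks; instead it argues through the time-domain interpretation: it specializes the LKF $V_{k}$ of Proposition \ref{pp1} (and $\overline{V}_{k}$ of its barred analogue) to $k=1$, shifts the time argument, and identifies the resulting functional and its derivative \eqref{eq50}--\eqref{eq51} (resp. \eqref{eq52}--\eqref{eq57}) with Carvalho's functional $W_{1}$ in \eqref{eq53} and its derivative \eqref{eq56}, reading off the matrix identities from the matching quadratic forms. Your argument is the direct one: at $k=1$ the data degenerate to $A_{1}=\overline{A}_{1}=0$, $B_{1}=\overline{B}_{1}=I_{n}$, $\mathscr{A}_{1}=\overline{\mathscr{A}}_{1}=B$, $\mathscr{B}_{1}=\overline{\mathscr{B}}_{1}=A$, so \eqref{eq77b} and \eqref{eqomegak} both reduce to $\left[\begin{smallmatrix} B^{\mathrm{T}}QB-P & B^{\mathrm{T}}QA\\ A^{\mathrm{T}}QB & A^{\mathrm{T}}QA+P-Q\end{smallmatrix}\right]$, which is exactly $E_{2}^{\mathrm{T}}\mathit{\Phi}_{1}(Q,P)E_{2}$ since conjugation by the block swap $E_{2}$ exchanges the diagonal and off-diagonal blocks; I checked the blocks and they all match. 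Your route is more elementary and arguably tighter: the paper's ``comparison of LKFs'' identifies two quadratic forms evaluated along trajectories, whereas you prove the matrix identity outright, and you also get \eqref{eq99b} for free from the observation that $\overline{\mathit{\Omega}}_{1}$ and $\mathit{\Omega}_{1}$ coincide as functions at $k=1$ (the only small thing worth stating explicitly is that \eqref{eqomegak} is the same functional form as \eqref{eq77b} in the barred data, which a one-line expansion confirms). What the paper's approach buys instead is conceptual uniformity: the same LKF machinery simultaneously explains the $k=1$ case and the general-$k$ relation to Bliman's condition, which is the theme of Section \ref{sec3}. Your handling of the ``special case'' conclusion via nonsingularity of $E_{2}$ and the swap $(X_{1},Y_{1})\mapsto(Q_{1},P_{1})$ is also correct.
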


\begin{proof}
Let $k=1.$ Then it follows from (\ref{eqv1}) that%
\begin{align}
V_{1}\left(  x_{t+b}\right)   &  =\int_{t-b}^{t-a}X_{1}^{\mathrm{T}}\left(
s+b\right)  P_{1}X_{1}\left(  s+b\right)  \mathrm{d}s+\int_{t-a}^{t}%
X_{1}^{\mathrm{T}}\left(  s+b\right)  Q_{1}X_{1}\left(  s+b\right)
\mathrm{d}s\nonumber \\
&  =\int_{t-b}^{t-a}x^{\mathrm{T}}\left(  s\right)  P_{1}x\left(  s\right)
\mathrm{d}s+\int_{t-a}^{t}x^{\mathrm{T}}\left(  s\right)  Q_{1}x\left(
s\right)  \mathrm{d}s, \label{eq50}%
\end{align}
and from (\ref{eqdv1}) that%
\begin{align}
\dot{V}_{1}\left(  x_{t+b}\right)   &  =\left[
\begin{array}
[c]{c}%
x\left(  t-b\right) \\
x\left(  t-a\right)
\end{array}
\right]  ^{\mathrm{T}}\mathit{\Omega}_{1}\left(  P_{1},Q_{1}\right)  \left[
\begin{array}
[c]{c}%
x\left(  t-b\right) \\
x\left(  t-a\right)
\end{array}
\right] \nonumber \\
&  =\left[
\begin{array}
[c]{c}%
x\left(  t-a\right) \\
x\left(  t-b\right)
\end{array}
\right]  ^{\mathrm{T}}E_{2}^{\mathrm{T}}\mathit{\Omega}_{1}\left(  P_{1}%
,Q_{1}\right)  E_{2}\left[
\begin{array}
[c]{c}%
x\left(  t-a\right) \\
x\left(  t-b\right)
\end{array}
\right]  . \label{eq51}%
\end{align}
By comparing (\ref{eq50}) and (\ref{eq51}) with (\ref{eq53}) and (\ref{eq56}),
respectively, we get (\ref{eq99a}).

Similarly, we have from (\ref{eqv}) that%
\begin{align}
\overline{V}_{1}\left(  x_{t}\right)   &  =\int_{t-a}^{t}\overline{X}%
_{1}^{\mathrm{T}}\left(  s\right)  \overline{Q}_{1}\overline{X}_{1}\left(
s\right)  \mathrm{d}s+\int_{t-b}^{t-a}\overline{X}_{1}^{\mathrm{T}}\left(
s\right)  \overline{P}_{1}\overline{X}_{1}\left(  s\right)  \mathrm{d}%
s\nonumber \\
&  =\int_{t-a}^{t}x^{\mathrm{T}}\left(  s\right)  \overline{Q}_{1}x\left(
s\right)  \mathrm{d}s+\int_{t-b}^{t-a}x^{\mathrm{T}}\left(  s\right)
\overline{P}_{1}x\left(  s\right)  \mathrm{d}s, \label{eq52}%
\end{align}
and from (\ref{eqdv}) that%
\begin{align}
\dot{\overline{V}}_{1}\left(  x_{t}\right)   &  =\left[
\begin{array}
[c]{c}%
x\left(  t-b\right) \\
x\left(  t-a\right)
\end{array}
\right]  ^{\mathrm{T}}\overline{\mathit{\Omega}}_{1}\left(  \overline{P}%
_{1},\overline{Q}_{1}\right)  \left[
\begin{array}
[c]{c}%
x\left(  t-b\right) \\
x\left(  t-a\right)
\end{array}
\right] \nonumber \\
&  =\left[
\begin{array}
[c]{c}%
x\left(  t-a\right) \\
x\left(  t-b\right)
\end{array}
\right]  ^{\mathrm{T}}E_{2}^{\mathrm{T}}\overline{\mathit{\Omega}}_{1}\left(
\overline{P}_{1},\overline{Q}_{1}\right)  E_{2}\left[
\begin{array}
[c]{c}%
x\left(  t-a\right) \\
x\left(  t-b\right)
\end{array}
\right]  . \label{eq57}%
\end{align}
By comparing (\ref{eq52}) and (\ref{eq57}) with (\ref{eq53}) and (\ref{eq56}),
respectively, we get (\ref{eq99a}). The proof is finished.
\end{proof}

We next investigate the relationship between Theorem \ref{th4} and a result in
\cite{ddmb16tac}. To this end, we denote
\begin{align*}
N_{21}  &  =\left[
\begin{array}
[c]{ccc}%
A & B & 0\\
I_{n} & 0 & 0
\end{array}
\right]  ,\;N_{22}=\left[
\begin{array}
[c]{ccc}%
0 & 0 & I_{n}\\
0 & I_{n} & 0
\end{array}
\right]  ,\\
M_{21}  &  =\left[
\begin{array}
[c]{ccc}%
A & B & 0\\
0 & 0 & I_{n}%
\end{array}
\right]  ,\;M_{22}=\left[
\begin{array}
[c]{ccc}%
I_{n} & 0 & 0\\
0 & I_{n} & 0
\end{array}
\right]  .
\end{align*}

\begin{lemma}
\label{lm4} \cite{ddmb16tac} The linear difference equation (\ref{sys}) is
exponentially stable if there exist four positive definite matrices
$X_{2},Y_{2}\in \mathbf{R}^{2n\times2n},U_{2},V_{2}\in \mathbf{R}^{n\times n},$
such that the following LMI is satisfied%
\begin{equation}
\mathit{\Phi}_{2}\left(  X_{2}^{\ast},Y_{2}^{\ast}\right)  =N_{21}%
^{\mathrm{T}}X_{2}^{\ast}N_{21}-N_{22}^{\mathrm{T}}X_{2}^{\ast}N_{22}%
+M_{21}^{\mathrm{T}}Y_{2}^{\ast}M_{21}-M_{22}^{\mathrm{T}}Y_{2}^{\ast}%
M_{22}<0, \label{eq7a}%
\end{equation}
where%
\begin{align}
Y_{2}^{\ast}  &  =Y_{2}+\left[
\begin{array}
[c]{cc}%
U_{2}+V_{2} & 0\\
0 & 0_{n\times n}%
\end{array}
\right]  >0,\label{eq40}\\
X_{2}^{\ast}  &  =X_{2}+\left[
\begin{array}
[c]{cc}%
0_{n\times n} & 0\\
0 & V_{2}%
\end{array}
\right]  >0. \label{eq41}%
\end{align}

\end{lemma}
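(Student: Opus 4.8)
The plan is to prove Lemma~\ref{lm4} by a Lyapunov-Krasovskii argument, entirely parallel to the proof of Lemma~\ref{lm3} above (and reproducing the derivation in \cite{ddmb16tac}). Throughout, recall that $b>a$ is assumed without loss of generality, so that $b-a>0$ and $x(t-(b-a))$ is a genuine past value of the solution. First I would introduce the lifted signals $\chi(s)=[x^{\mathrm{T}}(s),\,x^{\mathrm{T}}(s-a)]^{\mathrm{T}}$ and $\psi(s)=[x^{\mathrm{T}}(s),\,x^{\mathrm{T}}(s-(b-a))]^{\mathrm{T}}$ together with the augmented vector $\xi(t)=[x^{\mathrm{T}}(t-a),\,x^{\mathrm{T}}(t-b),\,x^{\mathrm{T}}(t-(b-a))]^{\mathrm{T}}$, and verify, using only $x(t)=Ax(t-a)+Bx(t-b)$ together with the elementary identities $t-(b-a)-a=t-b$ and $t-a-(b-a)=t-b$, the four algebraic relations
\[
\chi(t)=N_{21}\xi(t),\qquad \chi(t-(b-a))=N_{22}\xi(t),\qquad \psi(t)=M_{21}\xi(t),\qquad \psi(t-a)=M_{22}\xi(t).
\]

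Next I would take the Lyapunov-Krasovskii functional
\[
W_{2}(x_{t})=\int_{t-(b-a)}^{t}\chi^{\mathrm{T}}(s)X_{2}^{\ast}\chi(s)\,\mathrm{d}s+\int_{t-a}^{t}\psi^{\mathrm{T}}(s)Y_{2}^{\ast}\psi(s)\,\mathrm{d}s.
\]
Expanding $X_{2}^{\ast}$ and $Y_{2}^{\ast}$ according to (\ref{eq40})-(\ref{eq41}) and making the substitution $u=s-a$ in the first integral, one checks that $W_{2}$ may be rewritten purely in terms of the four given matrices,
\[
W_{2}(x_{t})=\int_{t-(b-a)}^{t}\chi^{\mathrm{T}}(s)X_{2}\chi(s)\,\mathrm{d}s+\int_{t-a}^{t}\psi^{\mathrm{T}}(s)Y_{2}\psi(s)\,\mathrm{d}s+\int_{t-a}^{t}x^{\mathrm{T}}(s)U_{2}x(s)\,\mathrm{d}s+\int_{t-b}^{t}x^{\mathrm{T}}(s)V_{2}x(s)\,\mathrm{d}s,
\]
so that $W_{2}$ is a bona fide positive definite functional (since $X_{2},Y_{2},U_{2},V_{2}>0$; equivalently one only needs $X_{2}^{\ast}>0$ and $Y_{2}^{\ast}>0$, which is precisely the content of (\ref{eq40})-(\ref{eq41})).

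Differentiating the first form of $W_{2}$ along the solutions of (\ref{sys}), the boundary terms produced by the two integrals, combined with the four identities above, give
\[
\dot{W}_{2}(x_{t})=\xi^{\mathrm{T}}(t)\left(N_{21}^{\mathrm{T}}X_{2}^{\ast}N_{21}-N_{22}^{\mathrm{T}}X_{2}^{\ast}N_{22}+M_{21}^{\mathrm{T}}Y_{2}^{\ast}M_{21}-M_{22}^{\mathrm{T}}Y_{2}^{\ast}M_{22}\right)\xi(t)=\xi^{\mathrm{T}}(t)\,\mathit{\Phi}_{2}(X_{2}^{\ast},Y_{2}^{\ast})\,\xi(t).
\]
Since $\mathit{\Phi}_{2}(X_{2}^{\ast},Y_{2}^{\ast})<0$ by hypothesis, $\dot{W}_{2}$ is negative definite in $\xi(t)$, and exponential stability of (\ref{sys}) then follows from the Lyapunov-Krasovskii stability theorem exactly as in the concluding step of the proof of Lemma~\ref{lm3}.

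The only genuinely nontrivial step is discovering the right functional: the two window lengths ($b-a$ for the $X_{2}^{\ast}$-term and $a$ for the $Y_{2}^{\ast}$-term) and the two lifted signals ($\chi$ obtained by shifting $x$ by $a$, and $\psi$ by $b-a$) have to be read off from the block patterns of $N_{21},N_{22},M_{21},M_{22}$, and they must interlock so that the boundary evaluations produced by differentiation telescope into exactly $\mathit{\Phi}_{2}$. Once the functional is pinned down, checking the four identities and computing $\dot{W}_{2}$ is routine bookkeeping; the standing hypothesis $b>a$ is used only to keep the window $[t-(b-a),t]$ nondegenerate and $x(t-(b-a))$ meaningful as a past value.
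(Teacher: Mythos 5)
Your proposal is correct and follows essentially the same route as the paper: your functional $W_{2}$ is exactly the paper's LKF (the paper writes the $Y_{2}^{\ast}$-term over $[t-b,\,t-(b-a)]$ with the shifted signal $[x(s+b-a);x(s)]$, which your change of variable turns into your $\int_{t-a}^{t}\psi^{\mathrm{T}}Y_{2}^{\ast}\psi\,\mathrm{d}s$), and your identities $\chi(t)=N_{21}\xi(t)$, $\chi(t-(b-a))=N_{22}\xi(t)$, $\psi(t)=M_{21}\xi(t)$, $\psi(t-a)=M_{22}\xi(t)$ reproduce the paper's evaluation $\dot{W}_{2}=\xi_{2}^{\mathrm{T}}\mathit{\Phi}_{2}\xi_{2}<0$. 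The only cosmetic difference is the direction of the bookkeeping: the paper starts from the $U_{2},V_{2},X_{2},Y_{2}$ form and merges it into the $X_{2}^{\ast},Y_{2}^{\ast}$ form, whereas you start from the merged form and expand it to verify positivity, which is equivalent.
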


\begin{proof}
This lemma is a little different from the original result in \cite{ddmb16tac}
and thus a simple proof will be provided for completeness (also for the
purpose of further using). Choose a more general LKF candidate as
\cite{ddmb16tac} (where we have assumed without loss of generality that
$\mu=0$)
\begin{align}
W_{2}\left(  x_{t}\right)  =  &  \int_{t-a}^{t}x^{\mathrm{T}}\left(  s\right)
U_{2}x\left(  s\right)  \mathrm{d}s+\int_{t-b}^{t}x^{\mathrm{T}}\left(
s\right)  V_{2}x\left(  s\right)  \mathrm{d}s,\nonumber \\
&  +\int_{t-c}^{t}\left[
\begin{array}
[c]{c}%
x\left(  s\right) \\
x\left(  s-a\right)
\end{array}
\right]  ^{\mathrm{T}}X_{2}\left[
\begin{array}
[c]{c}%
x\left(  s\right) \\
x\left(  s-a\right)
\end{array}
\right]  \mathrm{d}s,\nonumber \\
&  +\int_{t-b}^{t-c}\left[
\begin{array}
[c]{c}%
x\left(  s+c\right) \\
x\left(  s\right)
\end{array}
\right]  ^{\mathrm{T}}Y_{2}\left[
\begin{array}
[c]{c}%
x\left(  s+c\right) \\
x\left(  s\right)
\end{array}
\right]  \mathrm{d}s,\nonumber
\end{align}
where $c=b-a$. It can be verified that%
\begin{align*}
&  \int_{t-a}^{t}x^{\mathrm{T}}\left(  s\right)  U_{2}x\left(  s\right)
\mathrm{d}s+\int_{t-b}^{t}x^{\mathrm{T}}\left(  s\right)  V_{2}x\left(
s\right)  \mathrm{d}s\\
&  =\int_{t-a}^{t}x^{\mathrm{T}}\left(  s\right)  \left(  U_{2}+V_{2}\right)
x\left(  s\right)  \mathrm{d}s+\int_{t-b}^{t-a}x^{\mathrm{T}}\left(  s\right)
V_{2}x\left(  s\right)  \mathrm{d}s\\
&  =\int_{t-b}^{t-c}x^{\mathrm{T}}\left(  s+c\right)  \left(  U_{2}%
+V_{2}\right)  x\left(  s+c\right)  \mathrm{d}s+\int_{t-c}^{t}x^{\mathrm{T}%
}\left(  s-a\right)  V_{2}x\left(  s-a\right)  \mathrm{d}s,
\end{align*}
from which it follows that%
\begin{align}
W_{2}\left(  x_{t}\right)  =  &  \int_{t-c}^{t}\left[
\begin{array}
[c]{c}%
x\left(  s\right) \\
x\left(  s-a\right)
\end{array}
\right]  ^{\mathrm{T}}X_{2}^{\ast}\left[
\begin{array}
[c]{c}%
x\left(  s\right) \\
x\left(  s-a\right)
\end{array}
\right]  \mathrm{d}s\nonumber \\
&  +\int_{t-b}^{t-c}\left[
\begin{array}
[c]{c}%
x\left(  s+c\right) \\
x\left(  s\right)
\end{array}
\right]  ^{\mathrm{T}}Y_{2}^{\ast}\left[
\begin{array}
[c]{c}%
x\left(  s+c\right) \\
x\left(  s\right)
\end{array}
\right]  \mathrm{d}s, \label{eqw2}%
\end{align}
whose time-derivative can be evaluated as%
\begin{align}
\dot{W}_{2}\left(  x_{t}\right)  =  &  \left[
\begin{array}
[c]{c}%
x\left(  t\right) \\
x\left(  t-a\right)
\end{array}
\right]  ^{\mathrm{T}}X_{2}^{\ast}\left[
\begin{array}
[c]{c}%
x\left(  t\right) \\
x\left(  t-a\right)
\end{array}
\right]  -\left[
\begin{array}
[c]{c}%
x\left(  t-c\right) \\
x\left(  t-b\right)
\end{array}
\right]  ^{\mathrm{T}}X_{2}^{\ast}\left[
\begin{array}
[c]{c}%
x\left(  t-c\right) \\
x\left(  t-b\right)
\end{array}
\right] \nonumber \\
&  +\left[
\begin{array}
[c]{c}%
x\left(  t\right) \\
x\left(  t-c\right)
\end{array}
\right]  ^{\mathrm{T}}Y_{2}^{\ast}\left[
\begin{array}
[c]{c}%
x\left(  t\right) \\
x\left(  t-c\right)
\end{array}
\right]  -\left[
\begin{array}
[c]{c}%
x\left(  t-a\right) \\
x\left(  t-b\right)
\end{array}
\right]  ^{\mathrm{T}}Y_{2}^{\ast}\left[
\begin{array}
[c]{c}%
x\left(  t-a\right) \\
x\left(  t-b\right)
\end{array}
\right] \nonumber \\
=  &  \xi_{2}^{\mathrm{T}}\left(  t\right)  \mathit{\Phi}_{2}\xi_{2}\left(
t\right)  , \label{eqdw2}%
\end{align}
where $\xi_{2}\left(  t\right)  =[x^{\mathrm{T}}\left(  t-a\right)
,x^{\mathrm{T}}\left(  t-b\right)  ,x^{\mathrm{T}}\left(  t-c\right)
]^{\mathrm{T}}$. The result then follows again from the Lyapunov stability
theorem \cite{carvalho96laa}.
\end{proof}

The decision matrices $U_{2}$ and $V_{2}$ in (\ref{eq7a}) are in fact
redundant, as shown in the following corollary.

\begin{corollary}
\label{coro1}There exist four positive definite matrices $X_{2},Y_{2}%
\in \mathbf{R}^{2n\times2n},U_{2},V_{2}\in \mathbf{R}^{n\times n}$ such that
(\ref{eq7a}) is satisfied if and only if there exist two positive definite
matrices $X_{2}^{\ast},Y_{2}^{\ast}\in \mathbf{R}^{2n\times2n}$ such that
(\ref{eq7a}) is satisfied.
\end{corollary}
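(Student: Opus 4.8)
The plan is to show that the parametrization (\ref{eq40})--(\ref{eq41}) neither enlarges nor shrinks the set of attainable pairs $(X_{2}^{\ast},Y_{2}^{\ast})$: the pairs obtained by letting $X_{2},Y_{2},U_{2},V_{2}$ range over the respective positive definite cones are exactly \emph{all} pairs of positive definite matrices in $\mathbf{R}^{2n\times 2n}$. Since the LMI (\ref{eq7a}) depends on $X_{2},Y_{2},U_{2},V_{2}$ only through $(X_{2}^{\ast},Y_{2}^{\ast})$, this identification of feasible sets immediately yields the claimed equivalence.

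The forward implication is essentially free. Given $X_{2},Y_{2},U_{2},V_{2}>0$ for which (\ref{eq7a}) holds, I would observe that the correction blocks $\mathrm{diag}(0_{n\times n},V_{2})$ and $\mathrm{diag}(U_{2}+V_{2},0_{n\times n})$ appearing in (\ref{eq41}) and (\ref{eq40}) are positive semidefinite, so that $X_{2}^{\ast}\geq X_{2}>0$ and $Y_{2}^{\ast}\geq Y_{2}>0$; these two positive definite matrices satisfy (\ref{eq7a}) by hypothesis.

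For the converse, given positive definite $X_{2}^{\ast},Y_{2}^{\ast}\in\mathbf{R}^{2n\times 2n}$ satisfying (\ref{eq7a}), the idea is to absorb a small multiple of the identity into the auxiliary matrices: choose $\varepsilon>0$ small enough that both $X_{2}^{\ast}-\mathrm{diag}(0_{n\times n},\varepsilon I_{n})$ and $Y_{2}^{\ast}-\mathrm{diag}(2\varepsilon I_{n},0_{n\times n})$ remain positive definite (possible since $X_{2}^{\ast}>0$, $Y_{2}^{\ast}>0$ and there are only two constraints to meet simultaneously), then set $U_{2}=V_{2}=\varepsilon I_{n}$ and define $X_{2}:=X_{2}^{\ast}-\mathrm{diag}(0_{n\times n},\varepsilon I_{n})$, $Y_{2}:=Y_{2}^{\ast}-\mathrm{diag}(2\varepsilon I_{n},0_{n\times n})$. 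With this choice $X_{2},Y_{2},U_{2},V_{2}>0$, and (\ref{eq40})--(\ref{eq41}) reproduce exactly the original $X_{2}^{\ast},Y_{2}^{\ast}$, so (\ref{eq7a}) holds for these four positive definite matrices.

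I do not expect any genuine obstacle here: the entire content is the elementary fact that a strictly positive definite matrix stays positive definite after subtracting a sufficiently small positive semidefinite perturbation. The only point requiring a little care is that the single scalar $\varepsilon$ must be taken small enough to handle both perturbation constraints at once, which is trivial since both $X_{2}^{\ast}$ and $Y_{2}^{\ast}$ are strictly positive definite.
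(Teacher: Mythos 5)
Your proposal is correct and follows essentially the same route as the paper: the forward direction uses that the correction blocks in (\ref{eq40})--(\ref{eq41}) are positive semidefinite, and the converse recovers $X_{2},Y_{2}$ by choosing $U_{2}=V_{2}=\varepsilon I_{n}$ with $\varepsilon>0$ sufficiently small. Your version is slightly more explicit (spelling out the subtraction $X_{2}=X_{2}^{\ast}-\mathrm{diag}(0,\varepsilon I_{n})$, $Y_{2}=Y_{2}^{\ast}-\mathrm{diag}(2\varepsilon I_{n},0)$ and the fact that (\ref{eq7a}) depends on the data only through $(X_{2}^{\ast},Y_{2}^{\ast})$), but the argument is the same.
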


\begin{proof}
If $X_{2}>0,Y_{2}>0,U_{2}>0,V_{2}>0,$ then it follows from (\ref{eq40}%
)-(\ref{eq41}) that $X_{2}^{\ast}>0,Y_{2}^{\ast}>0.$ On the other hand, if
$X_{2}^{\ast}>0,Y_{2}^{\ast}>0$, we can always find $X_{2}>0,Y_{2}%
>0,U_{2}>0,V_{2}>0,$ satisfying (\ref{eq40})-(\ref{eq41}), for example,
$U_{2}=V_{2}=\varepsilon I_{n},$ where $\varepsilon>0$ is sufficiently small.
The proof is finished.
\end{proof}

We then can state the following result which connects the result in this paper
and the one in \cite{ddmb16tac}.

\begin{proposition}
Let $\left(  P_{2},Q_{2}\right)  ,\left(  \overline{P}_{2},\overline{Q}%
_{2}\right)  $ and $\left(  X_{2}^{\ast},Y_{2}^{\ast}\right)  $ be related
with
\begin{align}
P_{2}  &  =W_{2}^{\mathrm{T}}X_{2}^{\ast}W_{2},\;Q_{2}=E_{2}^{\mathrm{T}}%
Y_{2}^{\ast}E_{2},\label{eqpq2}\\
X_{2}^{\ast}  &  =\overline{P}_{2},\;Y_{2}^{\ast}=E_{2}^{\mathrm{T}}%
W_{2}^{\mathrm{T}}\overline{Q}_{2}W_{2}E_{2}. \label{eqpq2a}%
\end{align}
Then $\mathit{\Omega}_{2}\left(  P_{2},Q_{2}\right)  $ and $\mathit{\Phi}%
_{2}\left(  X_{2}^{\ast},Y_{2}^{\ast}\right)  $ satisfy
\begin{align}
\mathit{\Omega}_{2}\left(  P_{2},Q_{2}\right)   &  =T_{3}^{\mathrm{T}}%
E_{3}^{\mathrm{T}}\mathit{\Phi}_{2}\left(  X_{2}^{\ast},Y_{2}^{\ast}\right)
E_{3}T_{3},\label{eqpq3}\\
\mathit{\Phi}_{2}\left(  X_{2}^{\ast},Y_{2}^{\ast}\right)   &  =E_{3}%
^{\mathrm{T}}\overline{\mathit{\Omega}}_{2}\left(  \overline{P}_{2}%
,\overline{Q}_{2}\right)  E_{3}. \label{eqpq3a}%
\end{align}
Thus the result in Lemma \ref{lm4} \cite{ddmb16tac} is a special case of Lemma
\ref{th2} and Theorem \ref{th4}.
\end{proposition}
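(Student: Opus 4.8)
The plan is to read off the two matrix identities (\ref{eqpq3a}) and (\ref{eqpq3}) from the time-domain interpretations already established in this section, and then deduce the ``special case'' statement. Throughout put $c=b-a$ (recall $b>a$). I handle (\ref{eqpq3a}) first; the key point is that, under the coupling (\ref{eqpq2a}), the Lyapunov--Krasovskii functional $W_2$ of Lemma \ref{lm4}, in its reduced form (\ref{eqw2}), is exactly a time-shifted copy of the Bliman functional $\overline{V}_2$ of (\ref{eqv}). Substituting $X_2^{\ast}=\overline{P}_2$ in (\ref{eqw2}) turns the first integrand into $\overline{X}_2^{\mathrm{T}}(s)\overline{P}_2\overline{X}_2(s)$, since $\overline{X}_2(s)=[x^{\mathrm{T}}(s),x^{\mathrm{T}}(s-a)]^{\mathrm{T}}$. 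Substituting $Y_2^{\ast}=E_2^{\mathrm{T}}W_2^{\mathrm{T}}\overline{Q}_2W_2E_2$ and using $x(s+b)=Ax(s+c)+Bx(s)$ (this is (\ref{sys}) at time $s+b$, since $c+a=b$) together with the explicit forms of $E_2$ and $W_2$, one gets $W_2E_2\,[x^{\mathrm{T}}(s+c),x^{\mathrm{T}}(s)]^{\mathrm{T}}=[x^{\mathrm{T}}(s+b),x^{\mathrm{T}}(s+c)]^{\mathrm{T}}=\overline{X}_2(s+b)$, so the second integrand becomes $\overline{X}_2^{\mathrm{T}}(s+b)\overline{Q}_2\overline{X}_2(s+b)$. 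The change of variable $u=s+b$ there, which sends $[t-b,t-c]$ to $[t,t+a]$, yields
\[
W_2(x_t)=\int_{t-c}^{t}\overline{X}_2^{\mathrm{T}}(s)\overline{P}_2\overline{X}_2(s)\,\mathrm{d}s+\int_{t}^{t+a}\overline{X}_2^{\mathrm{T}}(u)\overline{Q}_2\overline{X}_2(u)\,\mathrm{d}u=\overline{V}_2(x_{t+a}),
\]
the last equality being (\ref{eqv}) with $k=2$ at time $t+a$ (note $t+a-b=t-c$).

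Differentiating this identity in $t$ and using (\ref{eqdw2}) on the left, (\ref{eqdv}) with $k=2$ on the right, gives
\[
\xi_2^{\mathrm{T}}(t)\,\mathit{\Phi}_2\left(X_2^{\ast},Y_2^{\ast}\right)\xi_2(t)=\left[\begin{array}{c}\overline{X}_2(t-c)\\x(t-a)\end{array}\right]^{\mathrm{T}}\overline{\mathit{\Omega}}_2\left(\overline{P}_2,\overline{Q}_2\right)\left[\begin{array}{c}\overline{X}_2(t-c)\\x(t-a)\end{array}\right],
\]
with $\xi_2(t)=[x^{\mathrm{T}}(t-a),x^{\mathrm{T}}(t-b),x^{\mathrm{T}}(t-c)]^{\mathrm{T}}$; since $\overline{X}_2(t-c)=[x^{\mathrm{T}}(t-c),x^{\mathrm{T}}(t-b)]^{\mathrm{T}}$, the vector on the right equals $E_3\xi_2(t)$, $E_3$ being the block-reversal matrix introduced before Lemma \ref{lm3a}. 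Since the delayed values $x(t-a),x(t-b),x(t-c)$ can be assigned arbitrarily by the choice of initial data (all three instants lie in one interval of length $<b$), this quadratic-form identity forces $\mathit{\Phi}_2(X_2^{\ast},Y_2^{\ast})=E_3^{\mathrm{T}}\overline{\mathit{\Omega}}_2(\overline{P}_2,\overline{Q}_2)E_3$, i.e.\ (\ref{eqpq3a}). (The same identity could be checked by direct block expansion against $N_{21},N_{22},M_{21},M_{22},\overline{A}_2,\overline{B}_2$; the functional computation is merely the organized version of that.)

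Identity (\ref{eqpq3}) then follows by composing (\ref{eqpq3a}) with the earlier proposition of this section, which asserts $\mathit{\Omega}_k(P_k,Q_k)=T_k^{\mathrm{T}}\overline{\mathit{\Omega}}_k(\overline{P}_k,\overline{Q}_k)T_k$ whenever $P_k=W_k^{\mathrm{T}}\overline{P}_kW_k$ and $Q_k=W_k^{\mathrm{T}}\overline{Q}_kW_k$. Relations (\ref{eqpq2})--(\ref{eqpq2a}) are exactly those hypotheses for $k=2$: $P_2=W_2^{\mathrm{T}}X_2^{\ast}W_2=W_2^{\mathrm{T}}\overline{P}_2W_2$, while $Q_2=E_2^{\mathrm{T}}Y_2^{\ast}E_2=W_2^{\mathrm{T}}\overline{Q}_2W_2$ because $E_2^{\mathrm{T}}=E_2=E_2^{-1}$. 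Hence $\mathit{\Omega}_2(P_2,Q_2)=T_2^{\mathrm{T}}\overline{\mathit{\Omega}}_2(\overline{P}_2,\overline{Q}_2)T_2$, and inserting $\overline{\mathit{\Omega}}_2(\overline{P}_2,\overline{Q}_2)=E_3^{\mathrm{T}}\mathit{\Phi}_2(X_2^{\ast},Y_2^{\ast})E_3$ from (\ref{eqpq3a}) (using $E_3^{\mathrm{T}}=E_3$, $E_3^2=I$) gives $\mathit{\Omega}_2(P_2,Q_2)=T_2^{\mathrm{T}}E_3^{\mathrm{T}}\mathit{\Phi}_2(X_2^{\ast},Y_2^{\ast})E_3T_2$, which is (\ref{eqpq3}).

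For the ``special case'' conclusion, let (\ref{eq7a}) hold with $X_2^{\ast}>0$, $Y_2^{\ast}>0$. Then, $E_3$ being nonsingular, $\overline{\mathit{\Omega}}_2(\overline{P}_2,\overline{Q}_2)=E_3^{\mathrm{T}}\mathit{\Phi}_2(X_2^{\ast},Y_2^{\ast})E_3<0$ with $\overline{P}_2:=X_2^{\ast}>0$ and $\overline{Q}_2:=W_2^{-\mathrm{T}}E_2Y_2^{\ast}E_2W_2^{-1}>0$ in the generic case $\det B\neq0$, so the Bliman LMI (\ref{eqz}) and, by the identity just derived (with $P_2,Q_2>0$ from (\ref{eqpq2})), the LMI (\ref{eq55}) of Theorem \ref{th4} are both feasible at $k=2$. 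When $\det B=0$ the same conclusion holds because Lemma \ref{lm4} already certifies exponential stability, so the converse parts of Lemma \ref{th2} and Theorem \ref{th4} make (\ref{eqz}) and (\ref{eq55}) feasible for all sufficiently large $k$. Thus every linear difference equation (\ref{sys}) handled by Lemma \ref{lm4} is handled by Lemma \ref{th2} and by Theorem \ref{th4}. The one delicate point is the bookkeeping in the first step — aligning the delay shifts and verifying that $W_2$ and $E_2$ from (\ref{eqpq2a}), together with the shift by $a$, are exactly what turns the functional $W_2$ of Lemma \ref{lm4} into $\overline{V}_2$; once that is pinned down, the remainder is routine composition of identities already in hand.
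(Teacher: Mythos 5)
Your argument is correct in substance and uses the same Lyapunov--Krasovskii comparison machinery as the paper, but it routes the two identities differently. The paper proves (\ref{eqpq3}) directly: it rewrites $W_{2}\left(  x_{t-a}\right)  $ of Lemma \ref{lm4} so that, under (\ref{eqpq2}), it coincides with $V_{2}\left(  x_{t+b}\right)  $ of Proposition \ref{pp1} (see (\ref{eq60})--(\ref{eq63})), and then only remarks that (\ref{eqpq2a})/(\ref{eqpq3a}) ``can be proven in a similar way.'' You instead prove (\ref{eqpq3a}) first, by showing $W_{2}\left(  x_{t}\right)  =\overline{V}_{2}\left(  x_{t+a}\right)  $ under (\ref{eqpq2a}) (your verification $W_{2}E_{2}[x^{\mathrm{T}}(s+c),x^{\mathrm{T}}(s)]^{\mathrm{T}}=\overline{X}_{2}(s+b)$ and the change of variable are correct), and then obtain (\ref{eqpq3}) by composing with the earlier proposition (\ref{eq45})--(\ref{eq46}), after checking that (\ref{eqpq2})--(\ref{eqpq2a}) amount exactly to $P_{2}=W_{2}^{\mathrm{T}}\overline{P}_{2}W_{2}$ and $Q_{2}=W_{2}^{\mathrm{T}}\overline{Q}_{2}W_{2}$ (using $E_{2}^{2}=I_{2n}$). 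This factorization is clean and arguably more informative, and you also supply an argument for the final ``special case'' sentence, which the paper's proof does not address at all. Note that your congruence matrix is $\mathrm{diag}\left(  W_{2},I_{n}\right)  $, i.e.\ $T_{2}$ in the paper's own notation $T_{k}=\mathrm{diag}\left(  W_{k},I_{n}\right)  $; this is the dimensionally consistent reading of the symbol ``$T_{3}$'' appearing in (\ref{eqpq3}) and (\ref{eq63}), so the discrepancy is the paper's, not yours.

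Two points to tighten. First, the step ``the delayed values can be assigned arbitrarily'' requires the three instants $t-a,t-b,t-c$ to be \emph{distinct}, not merely to lie in one initial interval; this fails when $b=2a$ (then $c=a$ and the first and third blocks of $\xi_{2}(t)$ coincide). Since the matrix identity (\ref{eqpq3a}) involves only $A,B$ and the weight matrices, not the delays, the gap is harmless: argue it for one delay pair with $b\neq2a$, or fall back on the direct block expansion you mention (the paper's own comparison of (\ref{eq60})--(\ref{eq63}) rests on the same implicit quadratic-form step). Second, in the singular-$B$ fallback you invoke the converse part of Theorem \ref{th4}, which requires \emph{strong} stability, whereas Lemma \ref{lm4} as stated certifies exponential stability for the given $(a,b)$; this is repaired by observing that the LMI (\ref{eq7a}) does not involve $a,b$, so its feasibility yields exponential stability for every positive delay pair and hence strong stability, after which your appeal to the converse parts of Lemma \ref{th2} and Theorem \ref{th4} goes through. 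Neither point affects the identities (\ref{eqpq3})--(\ref{eqpq3a}) themselves, which are established correctly.
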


\begin{proof}
Notice from (\ref{eqv1}) that%
\begin{align}
V_{2}\left(  x_{t+b}\right)   &  =\int_{t-b}^{t-a}X_{2}^{\mathrm{T}}\left(
s+b\right)  P_{2}X_{2}\left(  s+b\right)  \mathrm{d}s+\int_{t-a}^{t}%
X_{2}^{\mathrm{T}}\left(  s+b\right)  Q_{2}X_{2}\left(  s+b\right)
\mathrm{d}s\nonumber \\
&  =\int_{t-b}^{t-a}\left[
\begin{array}
[c]{c}%
x\left(  s-b\right) \\
x\left(  s-a\right)
\end{array}
\right]  ^{\mathrm{T}}P_{2}\left[
\begin{array}
[c]{c}%
x\left(  s-b\right) \\
x\left(  s-a\right)
\end{array}
\right]  \mathrm{d}s+\int_{t-a}^{t}\left[
\begin{array}
[c]{c}%
x\left(  s-b\right) \\
x\left(  s-a\right)
\end{array}
\right]  ^{\mathrm{T}}Q_{2}\left[
\begin{array}
[c]{c}%
x\left(  s-b\right) \\
x\left(  s-a\right)
\end{array}
\right]  \mathrm{d}s, \label{eq60}%
\end{align}
and from (\ref{eqdv1}) that%
\begin{align}
\dot{V}_{2}\left(  x_{t+b}\right)   &  =\left[
\begin{array}
[c]{c}%
X_{2}\left(  t\right) \\
x\left(  t-2a\right)
\end{array}
\right]  ^{\mathrm{T}}\mathit{\Omega}_{2}\left(  P_{2},Q_{2}\right)  \left[
\begin{array}
[c]{c}%
X_{2}\left(  t\right) \\
x\left(  t-2a\right)
\end{array}
\right] \nonumber \\
&  =\left[
\begin{array}
[c]{c}%
x\left(  t-2b\right) \\
x\left(  t-a-b\right) \\
x\left(  t-2a\right)
\end{array}
\right]  ^{\mathrm{T}}\mathit{\Omega}_{2}\left(  P_{2},Q_{2}\right)  \left[
\begin{array}
[c]{c}%
x\left(  t-2b\right) \\
x\left(  t-a-b\right) \\
x\left(  t-2a\right)
\end{array}
\right]  . \label{eq61}%
\end{align}
On the other hand, we get from (\ref{eqw2}) that%
\begin{align*}
W_{2}\left(  x_{t}\right)   &  =\int_{t-\left(  b-a\right)  }^{t}\left[
\begin{array}
[c]{c}%
x\left(  s\right) \\
x\left(  s-a\right)
\end{array}
\right]  ^{\mathrm{T}}X_{2}^{\ast}\left[
\begin{array}
[c]{c}%
x\left(  s\right) \\
x\left(  s-a\right)
\end{array}
\right]  \mathrm{d}s+\int_{t-b}^{t-\left(  b-a\right)  }\left[
\begin{array}
[c]{c}%
x\left(  s+b-a\right) \\
x\left(  s\right)
\end{array}
\right]  ^{\mathrm{T}}Y_{2}^{\ast}\left[
\begin{array}
[c]{c}%
x\left(  s+b-a\right) \\
x\left(  s\right)
\end{array}
\right]  \mathrm{d}s\\
&  =\int_{t-b}^{t-a}\left[
\begin{array}
[c]{c}%
x\left(  s+a\right) \\
x\left(  s\right)
\end{array}
\right]  ^{\mathrm{T}}X_{2}^{\ast}\left[
\begin{array}
[c]{c}%
x\left(  s+a\right) \\
x\left(  s\right)
\end{array}
\right]  \mathrm{d}s+\int_{t-a}^{t}\left[
\begin{array}
[c]{c}%
x\left(  s\right) \\
x\left(  s-b+a\right)
\end{array}
\right]  ^{\mathrm{T}}Y_{2}^{\ast}\left[
\begin{array}
[c]{c}%
x\left(  s\right) \\
x\left(  s-b+a\right)
\end{array}
\right]  \mathrm{d}s,
\end{align*}
from which we have%
\begin{align}
W_{2}\left(  x_{t-a}\right)   &  =\int_{t-b}^{t-a}\left[
\begin{array}
[c]{c}%
x\left(  s\right) \\
x\left(  s-a\right)
\end{array}
\right]  ^{\mathrm{T}}X_{2}^{\ast}\left[
\begin{array}
[c]{c}%
x\left(  s\right) \\
x\left(  s-a\right)
\end{array}
\right]  \mathrm{d}s+\int_{t-a}^{t}\left[
\begin{array}
[c]{c}%
x\left(  s-a\right) \\
x\left(  s-b\right)
\end{array}
\right]  ^{\mathrm{T}}Y_{2}^{\ast}\left[
\begin{array}
[c]{c}%
x\left(  s-a\right) \\
x\left(  s-b\right)
\end{array}
\right]  \mathrm{d}s\nonumber \\
&  =\int_{t-b}^{t-a}\left[
\begin{array}
[c]{c}%
x\left(  s-b\right) \\
x\left(  s-a\right)
\end{array}
\right]  ^{\mathrm{T}}W_{2}^{\mathrm{T}}X_{2}^{\ast}W_{2}\left[
\begin{array}
[c]{c}%
x\left(  s-b\right) \\
x\left(  s-a\right)
\end{array}
\right]  \mathrm{d}s+\int_{t-a}^{t}\left[
\begin{array}
[c]{c}%
x\left(  s-b\right) \\
x\left(  s-a\right)
\end{array}
\right]  ^{\mathrm{T}}E_{2}^{\mathrm{T}}Y_{2}^{\ast}E_{2}\left[
\begin{array}
[c]{c}%
x\left(  s-b\right) \\
x\left(  s-a\right)
\end{array}
\right]  \mathrm{d}s. \label{eq62}%
\end{align}
Moreover, from (\ref{eqdw2}) we obtain%
\begin{align}
\dot{W}_{2}\left(  x_{t-a}\right)   &  =\left[
\begin{array}
[c]{c}%
x\left(  t-2a\right) \\
x\left(  t-a-b\right) \\
x\left(  t-b\right)
\end{array}
\right]  ^{\mathrm{T}}\mathit{\Phi}_{2}\left(  X_{2}^{\ast},Y_{2}^{\ast
}\right)  \left[
\begin{array}
[c]{c}%
x\left(  t-2a\right) \\
x\left(  t-a-b\right) \\
x\left(  t-b\right)
\end{array}
\right] \nonumber \\
&  =\left[
\begin{array}
[c]{c}%
x\left(  t-2b\right) \\
x\left(  t-a-b\right) \\
x\left(  t-2a\right)
\end{array}
\right]  ^{\mathrm{T}}T_{3}^{\mathrm{T}}E_{3}^{\mathrm{T}}\mathit{\Phi}%
_{2}\left(  X_{2}^{\ast},Y_{2}^{\ast}\right)  E_{3}T_{3}\left[
\begin{array}
[c]{c}%
x\left(  t-2b\right) \\
x\left(  t-a-b\right) \\
x\left(  t-2a\right)
\end{array}
\right]  . \label{eq63}%
\end{align}
Thus, by comparing (\ref{eq62}) and (\ref{eq63}) with (\ref{eq60}) and
(\ref{eq61}) respectively, if (\ref{eqpq2}) is satisfied, we obtain
(\ref{eqpq3}). The relation (\ref{eqpq2a}) and (\ref{eqpq3a})\ can be proven
in a similar way.
\end{proof}

\section{Numerical Examples}

\begin{figure}[ptb]
\begin{center}
\includegraphics[scale=0.6]{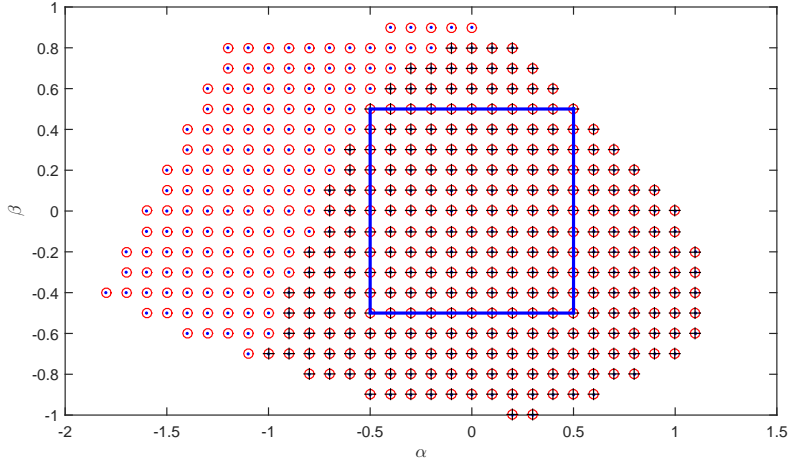}
\end{center}
\caption{Pairs $(\alpha,\beta)$ where the conditions in Lemma \ref{lm3}
(marked by `+'), Lemma \ref{th1} (marked by `.'), and Theorem \ref{th4} with
$k=2$ (which is equivalent to Lemma \ref{th2} with $k=2$, Lemma \ref{lm4}, and
Corollary \ref{coro1}) (marked by `o') are satisfied, respectively. The square
in blue color denotes $\square_{2}.$}%
\label{fig1}%
\end{figure}

We consider the linear difference equation (\ref{sys}) with%
\[
A\left(  \alpha \right)  =\left[
\begin{array}
[c]{cc}%
-0.4 & -0.3\\
0.1+\alpha & 0.15
\end{array}
\right]  ,\;B\left(  \beta \right)  =\left[
\begin{array}
[c]{cc}%
0.1 & 0.25\\
-0.9 & -0.1+\beta
\end{array}
\right]  ,
\]
where $\alpha,\beta \in \mathbf{R}$ are free parameters \cite{rmd18tac}. We look
for the pair $(\alpha,\beta)$ such that system (\ref{sys}) is strongly stable.
By a linear search technique, the regions of $(\alpha,\beta)$ obtained by
different methods are plotted in Fig. \ref{fig1}. One can verify that the
obtained region $(\alpha,\beta)$ by Theorem \ref{th4} with $k=2$ coincides
with the exact region of stability obtained in \cite{rmd18tac}. This indicates
that $k=2$ is already very efficient. Actually, thousands of numerical
examples show that $k=2$ in Theorem \ref{th4} can lead to necessary and
sufficient stability conditions. Thus, the advantage of Theorem \ref{th4} over
Lemma \ref{th1} is that the size of the LMI has been reduced significantly,
especially, for large $n$.

We now treat $\alpha$ and $\beta$ as uncertainties (which might be
time-varying) and solve the robust stability problem, particularly, we want to
find the maximal value of $r>0$ (denoted by $r^{\ast}$) such that the system
(\ref{sys}) is strongly stable for all $\alpha \in \left[  -r,r\right]  $ and
$\beta \in \left[  -r,r\right]  .$ To this end, we rewrite $A\left(
\alpha \right)  =A+\Delta A$ and $B\left(  \beta \right)  =B+\Delta B,$ where%
\begin{align*}
A &  =\left[
\begin{array}
[c]{cc}%
-0.4 & -0.3\\
0.1 & 0.15
\end{array}
\right]  ,\; \Delta A=\left[
\begin{array}
[c]{cc}%
0 & 0\\
\alpha & 0
\end{array}
\right]  ,\\
B &  =\left[
\begin{array}
[c]{cc}%
0.1 & 0.25\\
-0.9 & -0.1
\end{array}
\right]  ,\; \Delta B=\left[
\begin{array}
[c]{cc}%
0 & 0\\
0 & \beta
\end{array}
\right]  .
\end{align*}
It can be verified that $\left(  \Delta B,\Delta A\right)  $ satisfies
(\ref{eq884}) where $F=[\frac{\beta}{r},\frac{\alpha}{r}]$ and%
\[
E_{0}=\left[
\begin{array}
[c]{c}%
0\\
1
\end{array}
\right]  ,\;B_{0}=\left[
\begin{array}
[c]{cc}%
0 & r\\
0 & 0
\end{array}
\right]  ,\;A_{0}=\left[
\begin{array}
[c]{cc}%
0 & 0\\
r & 0
\end{array}
\right]  .
\]
We clearly have $F^{\mathrm{T}}F\leq I_{2}.$ Then, by applying Theorem
\ref{th5} for different $k$ and applying a linear search technique on $r$, we
can get $r_{\ast}\left(  k\right)  .$ It is found that $r_{\ast}\left(
1\right)  =0.4979$ and $r_{\ast}\left(  2\right)  =r_{\ast}\left(  3\right)
=0.5001.$ Denote the square $\square_{k}=\{ \left(  \alpha,\beta \right)
:\alpha \in \left[  -r_{\ast}\left(  k\right)  ,r_{\ast}\left(  k\right)
\right]  ,\beta \in \left[  -r_{\ast}\left(  k\right)  ,r_{\ast}\left(
k\right)  \right]  \}.$ It follows that $\square_{1}$ is very close to
$\square_{2}$ which is recorded in Fig. \ref{fig1}. We can see that the square
$\square_{2}$ turns to be the maximal square that can be included in the
region where the system is strongly stable for fixed $\left(  \alpha
,\beta \right)  .$ This indicates that Theorem \ref{th5} can even provide
necessary and sufficient conditions for robust strong stability for this example.

\section{Conclusion}

This note established a necessary and sufficient condition for guaranteeing
strong stability of linear difference equations with two delays. The most
important advantage of the proposed method is that the coefficients of the
linear difference equation appear as linear functions in the proposed
conditions, which helps to deal the robust stability analysis problem. The
relationships among the proposed condition and the existing ones were revealed
by establishing a time-domain interpretation of the proposed LMI condition.

\section*{Appendix}

\subsection*{A1: A Proof of Lemma \ref{th1}}

Notice that $\rho \left(  \Delta_{\theta}\right)  <1,\forall \theta \in
\mathbf{R}$, is equivalent to that $\Delta_{0}$ is Schur stable and%
\begin{align}
0 &  \neq \left \vert \Delta_{\theta}^{\mathrm{H}}\otimes \Delta_{\theta}%
-I_{n}\otimes I_{n}\right \vert \nonumber \\
&  =\left \vert A^{\mathrm{T}}\otimes B\mathrm{e}^{-\mathrm{j}\theta
}+B^{\mathrm{T}}\otimes A\mathrm{e}^{\mathrm{j}\theta}+\left(  A^{\mathrm{T}%
}\otimes A+B^{\mathrm{T}}\otimes B-I_{n}\otimes I_{n}\right)  \right \vert
\nonumber \\
&  =\mathrm{e}^{-n^{2}\mathrm{j}\theta}\left \vert A^{\mathrm{T}}\otimes
B+B^{\mathrm{T}}\otimes A\mathrm{e}^{-2\mathrm{j}\theta}+\left(
A^{\mathrm{T}}\otimes A+B^{\mathrm{T}}\otimes B-I_{n}\otimes I_{n}\right)
\mathrm{e}^{-\mathrm{j}\theta}\right \vert \nonumber \\
&  =\mathrm{e}^{-n^{2}\mathrm{j}\theta}\left \vert \mathcal{C}_{0}\left(
\mathrm{e}^{\mathrm{j}\theta}I_{2n^{2}}-\mathcal{A}_{0}\right)  ^{-1}%
\mathcal{B}_{0}+\mathcal{D}_{0}\right \vert \nonumber \\
&  =\mathrm{e}^{-n^{2}\mathrm{j}\theta}\left \vert G_{0}\left(  \mathrm{e}%
^{\mathrm{j}\theta}\right)  \right \vert ,\; \forall \theta \in \mathbf{R}%
,\label{eq2}%
\end{align}
where $G_{0}\left(  s\right)  =\mathcal{C}_{0}\left(  sI_{2n^{2}}%
-\mathcal{A}_{0}\right)  ^{-1}\mathcal{B}_{0}+\mathcal{D}_{0}$ with%
\begin{align*}
\mathcal{A}_{0} &  =\left[
\begin{array}
[c]{cc}%
0_{n^{2}\times n^{2}} & I_{n^{2}}\\
0_{n^{2}\times n^{2}} & 0_{n^{2}\times n^{2}}%
\end{array}
\right]  ,\  \mathcal{B}_{0}=\left[
\begin{array}
[c]{c}%
0_{n^{2}\times n^{2}}\\
I_{n^{2}}%
\end{array}
\right]  ,\\
\mathcal{C}_{0} &  =\left[
\begin{array}
[c]{cc}%
B^{\mathrm{T}}\otimes A & A^{\mathrm{T}}\otimes A+B^{\mathrm{T}}\otimes
B-I_{n}\otimes I_{n}%
\end{array}
\right]  ,\\
\mathcal{D}_{0} &  =A^{\mathrm{T}}\otimes B.
\end{align*}
The condition (\ref{eq2}) is also equivalent to
\begin{align*}
0 &  >-G_{0}^{\mathrm{H}}\left(  \mathrm{e}^{\mathrm{j}\theta}\right)
G_{0}\left(  \mathrm{e}^{\mathrm{j}\theta}\right)  \\
&  =\left[
\begin{array}
[c]{c}%
\left(  \mathrm{e}^{\mathrm{j}\theta}I_{2n^{2}}-\mathcal{A}_{0}\right)
^{-1}\mathcal{B}_{0}\\
I_{n^{2}}%
\end{array}
\right]  ^{\mathrm{H}}M_{0}\left[
\begin{array}
[c]{c}%
\left(  \mathrm{e}^{\mathrm{j}\theta}I_{2n^{2}}-\mathcal{A}_{0}\right)
^{-1}\mathcal{B}_{0}\\
I_{n^{2}}%
\end{array}
\right]
\end{align*}
where $\theta \in \mathbf{R}$ and
\[
M_{0}=-\left[
\begin{array}
[c]{cc}%
\mathcal{C}_{0} & \mathcal{D}_{0}%
\end{array}
\right]  ^{\mathrm{T}}\left[
\begin{array}
[c]{cc}%
\mathcal{C}_{0} & \mathcal{D}_{0}%
\end{array}
\right]  .
\]
Thus, by the YKP lemma (Lemma \ref{lm7}), this is equivalent to the existence
of a symmetric matrix $P\in \mathbf{R}^{2n^{2}\times2n^{2}}$ such that%
\begin{equation}
\left[
\begin{array}
[c]{cc}%
\mathcal{A}_{0}^{\mathrm{T}}P\mathcal{A}_{0}-P & \mathcal{A}_{0}^{\mathrm{T}%
}P\mathcal{B}_{0}\\
\mathcal{B}_{0}^{\mathrm{T}}P\mathcal{A}_{0} & \mathcal{B}_{0}^{\mathrm{T}%
}P\mathcal{B}_{0}%
\end{array}
\right]  +M_{0}<0.\label{eq3}%
\end{equation}
Let $P$ be partitioned as%
\begin{equation}
P=\left[
\begin{array}
[c]{cc}%
P_{1} & P_{3}\\
P_{3}^{\mathrm{T}} & -P_{2}%
\end{array}
\right]  ,\label{eq8}%
\end{equation}
where $P_{i},i=1,2,3,$ are $n^{2}\times n^{2}$ matrices with $P_{i},i=1,2$
being symmetric. Then, in view of the special structures of $\left(
\mathcal{A}_{0},\mathcal{B}_{0}\right)  ,$ (\ref{eq3}) is equivalent to the
LMI%
\begin{equation}
\left[
\begin{array}
[c]{ccc}%
-P_{1} & -P_{3} & 0\\
-P_{3}^{\mathrm{T}} & P_{1}+P_{2} & P_{3}\\
0 & P_{3}^{\mathrm{T}} & -P_{2}%
\end{array}
\right]  +M_{0}<0.\label{eq9}%
\end{equation}
Applying the congruent transformation
\[
T=\left[
\begin{array}
[c]{ccc}%
I_{n^{2}} & 0 & 0\\
0 & 0 & I_{n^{2}}\\
0 & I_{n^{2}} & 0
\end{array}
\right]  ,
\]
on the LMI (\ref{eq9}) gives (\ref{eq1}). The proof is finished by noting that
$\Delta_{0}$ is Schur stable if and only if (\ref{eq0}) is satisfied.

\subsection*{A2: Some Technical Notations and Lemmas}

For two matrices $A\in \mathbf{R}^{n\times n},B\in \mathbf{R}^{n\times n},$ the
shuffle product (power) is defined as \cite{fornasini80tcs}%
\[
A^{\left[  i\right]  }B^{\left[  j\right]  }=\sum \limits_{\substack{i_{1}%
+i_{2}+\cdots+i_{s}=i\\j_{1}+j_{2}+\cdots+j_{s}=j}}A^{i_{1}}B^{j_{1}}A^{i_{2}%
}B^{i_{2}}\cdots A^{i_{s}}B^{j_{s}},
\]
where $\left(  i,j\right)  $ is a pair of nonnegative integers, and
$i_{k},j_{k}\geq0,k=1,2,\ldots,s.$ For example,
\[
A^{\left[  1\right]  }B^{\left[  2\right]  }=AB^{2}+BAB+B^{2}A.
\]
There are several simple properties of the shuffle product. For example,
\cite{fornasini80tcs}
\begin{align}
A^{\left[  i\right]  }B^{\left[  j\right]  }  &  =B^{\left[  j\right]
}A^{\left[  i\right]  },\nonumber \\
A^{\left[  i\right]  }B^{\left[  0\right]  }  &  =A^{i},\;A^{\left[  0\right]
}B^{\left[  j\right]  }=B^{j},\nonumber \\
A^{\left[  i\right]  }B^{\left[  j\right]  }  &  =A\left(  A^{\left[
i-1\right]  }B^{\left[  j\right]  }\right)  +B\left(  A^{\left[  i\right]
}B^{\left[  j-1\right]  }\right) \nonumber \\
&  =\left(  A^{\left[  i-1\right]  }B^{\left[  j\right]  }\right)  A+\left(
A^{\left[  i\right]  }B^{\left[  j-1\right]  }\right)  B. \label{eq87}%
\end{align}

We next recall the so-called Yakubovich-Kalman-Popov (YKP) Lemma. This lemma
in the discrete-time setting is also known as the Szego-Kalman-Popov (SKP)
Lemma \cite{kalman63nas,popov64rrstsee,szego63nas}.

\begin{lemma}
\label{lm7} (YKP Lemma) Given $A\in \mathbf{R}^{n\times n},B\in \mathbf{R}%
^{n\times m}$ and $M\in \mathbf{R}^{\left(  n+m\right)  \times \left(
n+m\right)  }$ with $\left \vert \mathrm{e}^{\mathrm{j}\theta}I_{n}%
-A\right \vert \neq0,\forall \theta \in \mathbf{R}.$ Then%
\[
\left[
\begin{array}
[c]{c}%
\left(  \mathrm{e}^{\mathrm{j}\theta}I_{n}-A\right)  ^{-1}B\\
I_{m}%
\end{array}
\right]  ^{\mathrm{H}}M\left[
\begin{array}
[c]{c}%
\left(  \mathrm{e}^{\mathrm{j}\theta}I_{n}-A\right)  ^{-1}B\\
I_{m}%
\end{array}
\right]  <0,
\]
holds for all $\theta \in \mathbf{R}$ if and only if there exists a symmetric
matrix $P\in \mathbf{R}^{n\times n}$ such that%
\[
\left[
\begin{array}
[c]{cc}%
A^{\mathrm{T}}PA-P & A^{\mathrm{T}}PB\\
B^{\mathrm{T}}PA & B^{\mathrm{T}}PB
\end{array}
\right]  +M<0.
\]

\end{lemma}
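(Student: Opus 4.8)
The plan is to prove the two implications by different means --- the ``if'' direction by a direct substitution, the ``only if'' direction by a convex--separation (lossless S--procedure) argument --- after abbreviating $\Theta(P):=[A\ B]^{\mathrm{T}}P[A\ B]-\mathrm{diag}(P,0)$ for the first block matrix in the conclusion, so that the LMI reads $\Theta(P)+M<0$. \textbf{Sufficiency} is essentially the computation (\ref{eq78}) from the proof of Theorem \ref{th4}, here for a general pair $(A,B)$: given symmetric $P$ with $\Theta(P)+M<0$, fix $\theta\in\mathbf{R}$, put $G_{\theta}=\left(\mathrm{e}^{\mathrm{j}\theta}I_{n}-A\right)^{-1}B$ and $v_{\theta}=\left[\begin{smallmatrix}G_{\theta}\\ I_{m}\end{smallmatrix}\right]$, and note $[A\ B]v_{\theta}=AG_{\theta}+B=\mathrm{e}^{\mathrm{j}\theta}G_{\theta}$; then $v_{\theta}^{\mathrm{H}}\Theta(P)v_{\theta}=\left(\mathrm{e}^{\mathrm{j}\theta}G_{\theta}\right)^{\mathrm{H}}P\left(\mathrm{e}^{\mathrm{j}\theta}G_{\theta}\right)-G_{\theta}^{\mathrm{H}}PG_{\theta}=0$, so $v_{\theta}^{\mathrm{H}}(\Theta(P)+M)v_{\theta}=v_{\theta}^{\mathrm{H}}Mv_{\theta}$, which is negative definite for every $\theta$ because $\Theta(P)+M<0$ and $v_{\theta}$ has full column rank --- i.e. the frequency--domain inequality holds.

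For \textbf{necessity}, I would first reformulate the frequency--domain inequality: since $\left \vert \mathrm{e}^{\mathrm{j}\theta}I_{n}-A\right \vert \neq0$, it holds for all $\theta$ if and only if $v^{\mathrm{H}}Mv<0$ for every nonzero $v=\left[\begin{smallmatrix}\zeta\\ u\end{smallmatrix}\right]\in\mathbf{C}^{n}\times\mathbf{C}^{m}$ with $A\zeta+Bu=\mathrm{e}^{\mathrm{j}\theta}\zeta$ for some $\theta\in\mathbf{R}$ (any such pair has $\zeta=(\mathrm{e}^{\mathrm{j}\theta}I_{n}-A)^{-1}Bu$ and $u\neq0$). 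On the algebraic side, solvability of the LMI says exactly that $-M$ lies in the open convex set $\mathcal{U}=\{\Theta(P)+S:P=P^{\mathrm{T}},\,S>0\}$. Suppose not; separating the point $-M$ from $\mathcal{U}$ produces a nonzero symmetric $N$ with $\langle N,W\rangle\geq\langle N,-M\rangle$ for all $W$ in the closure of $\mathcal{U}$, which contains $\{\Theta(P)+S:S\geq0\}$. Taking $P=0,S=0$ gives $\langle N,M\rangle\geq0$; letting $S$ run over the positive semidefinite cone forces $N\geq0$; letting $P$ run over all symmetric matrices forces $\langle N,\Theta(P)\rangle=0$, which --- writing $N$ in $2\times2$ blocks $N_{ij}$ --- is the identity $[A\ B]\,N\,[A\ B]^{\mathrm{T}}=N_{11}$.

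\textbf{The crux} is then to extract from $N$ a vector violating the reformulated inequality. Factor $N=\left[\begin{smallmatrix}X\\ Y\end{smallmatrix}\right]\left[\begin{smallmatrix}X^{\mathrm{T}} & Y^{\mathrm{T}}\end{smallmatrix}\right]$ with $\left[\begin{smallmatrix}X\\ Y\end{smallmatrix}\right]$ of full column rank $r=\mathrm{rank}\,N$; then $N_{11}=XX^{\mathrm{T}}$ and $[A\ B]N[A\ B]^{\mathrm{T}}=(AX+BY)(AX+BY)^{\mathrm{T}}$, so $(AX+BY)(AX+BY)^{\mathrm{T}}=XX^{\mathrm{T}}$. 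This forces $\mathrm{range}(AX+BY)=\mathrm{range}(X)$, hence $AX+BY=XQ$ for some $Q\in\mathbf{R}^{r\times r}$, which --- after correcting it by a matrix whose columns lie in $\ker X$ --- may be taken real orthogonal. Picking a unitary eigenbasis $w_{1},\ldots,w_{r}$ of $Q$ with $Qw_{\ell}=\mathrm{e}^{\mathrm{j}\theta_{\ell}}w_{\ell}$ and setting $v_{\ell}=\left[\begin{smallmatrix}Xw_{\ell}\\ Yw_{\ell}\end{smallmatrix}\right]$ (each nonzero, by full column rank), one has $[A\ B]v_{\ell}=\mathrm{e}^{\mathrm{j}\theta_{\ell}}Xw_{\ell}$ and $N=\sum_{\ell}v_{\ell}v_{\ell}^{\mathrm{H}}$, so $0\leq\langle N,M\rangle=\sum_{\ell}v_{\ell}^{\mathrm{H}}Mv_{\ell}$; since $N\neq0$ the sum is nonempty, so some term is $\geq0$, contradicting the reformulated inequality. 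Hence $-M\in\mathcal{U}$, i.e. a symmetric $P$ with $\Theta(P)+M<0$ exists. I expect the step $(AX+BY)(AX+BY)^{\mathrm{T}}=XX^{\mathrm{T}}\Rightarrow AX+BY=XQ$ with $Q$ real orthogonal --- the losslessness of the S--procedure for the single constraint $\Theta(P)=0$ --- to be the one point requiring real care, the only genuine subtlety being column--rank deficiency of $X$ (the uncontrollable directions), handled by a routine orthogonal completion; and since every application of this lemma in the paper has $(A,B)=(A_{k},B_{k})$ from (\ref{eqab3}), a controllable pair, one could also simply invoke the classical controllable--case KYP/SKP lemma \cite{kalman63nas,popov64rrstsee,szego63nas}.
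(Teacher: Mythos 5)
The paper never proves Lemma \ref{lm7}: it is recalled in Appendix A2 as the classical discrete-time KYP (Szego--Kalman--Popov) lemma, with the proof delegated to the cited references, so there is no in-paper argument to compare against; what matters is whether your self-contained proof is sound, and it is. Your sufficiency half is exactly the substitution the paper itself performs inside the proof of Theorem \ref{th4} (the computation ending in (\ref{eq78})), stated for a general pair, and is correct. Your necessity half is the Rantzer-style lossless S-procedure argument: separate $-M$ from the open convex set $\{\Theta(P)+S:\ P=P^{\mathrm{T}},\ S>0\}$, deduce $N\geq 0$, $\langle N,M\rangle\geq 0$ and $[A\ \,B]\,N\,[A\ \,B]^{\mathrm{T}}=N_{11}$, factor $N$, and extract frequency-constrained vectors; all of these steps check out, including the use of the hypothesis $\left\vert \mathrm{e}^{\mathrm{j}\theta}I_{n}-A\right\vert\neq 0$ to rule out constrained vectors with $u=0$. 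The step you single out as the crux, $(AX+BY)(AX+BY)^{\mathrm{T}}=XX^{\mathrm{T}}\Rightarrow AX+BY=XQ$ with $Q$ real orthogonal, is indeed the only delicate point and is true: since the two Gram matrices coincide, $X^{\mathrm{T}}w\mapsto (AX+BY)^{\mathrm{T}}w$ is a well-defined isometry between $\mathrm{range}(X^{\mathrm{T}})$ and $\mathrm{range}((AX+BY)^{\mathrm{T}})$, and it extends to an orthogonal map of $\mathbf{R}^{r}$ because the complementary subspaces ($\ker X$ and $\ker(AX+BY)$) have equal dimension; your ``correct $\tilde{Q}$ by columns in $\ker X$'' is the same fix. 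Two minor remarks: the lemma as printed allows an arbitrary real $M$, while your trace pairing and the scalars $v^{\mathrm{H}}Mv$ implicitly need $M$ symmetric, so either assume symmetry (true in every application in the paper) or pass to the symmetric part of $M$ at the outset; and your closing observation is accurate --- all invocations in the paper use $(A_{k},B_{k})$ from (\ref{eqab3}) or $(\mathcal{A}_{0},\mathcal{B}_{0})$ from Appendix A1, which are controllable pairs, so the classical controllable-case statement in \cite{kalman63nas,popov64rrstsee,szego63nas} already suffices for the paper, whereas your argument buys the controllability-free version at the cost of the separation machinery.
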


The next lemma is adopted from \cite{bliman02mssp}.

\begin{lemma}
\label{lm9}\cite{bliman02mssp} If the inequality (\ref{eqr}) is satisfied,
namely,%
\begin{equation}
\sup_{\theta \in \left[  0,2\pi \right]  }\left \{  \rho \left(  \Delta_{\theta
}\right)  \right \}  <1, \label{eqr1}%
\end{equation}
then there exists a $k_{\ast}\in \mathbf{N}^{+}$ such that%
\begin{equation}
\sup_{\theta \in \left[  0,2\pi \right]  }\left \{  \left \Vert \Delta_{\theta}%
^{k}\right \Vert \right \}  <1,\; \forall k\geq k_{\ast}. \label{eqnorm}%
\end{equation}

\end{lemma}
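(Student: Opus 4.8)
The plan is to upgrade the \emph{pointwise} decay that follows from the spectral‑radius condition to a bound that is \emph{uniform} in $\theta$, by a compactness (finite‑subcover) argument. First I would observe that $\theta\mapsto\Delta_{\theta}=A+B\mathrm{e}^{-\mathrm{j}\theta}$ is continuous and that the spectral radius is a continuous function of the matrix entries; hence $\theta\mapsto\rho(\Delta_{\theta})$ is continuous on the compact interval $[0,2\pi]$ and attains its maximum $\rho_{\ast}:=\max_{\theta\in[0,2\pi]}\rho(\Delta_{\theta})$, which by \eqref{eqr1} satisfies $\rho_{\ast}<1$. Fix any $\mu$ with $\rho_{\ast}<\mu<1$. (I will work with a submultiplicative matrix norm; if the norm in use is not submultiplicative, one runs the argument with an operator norm and then invokes equivalence of norms on $\mathbf{R}^{n\times n}$.)

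Next, for each fixed $\theta_{0}$, since $\rho(\mu^{-1}\Delta_{\theta_{0}})\le\rho_{\ast}/\mu<1$ we have $(\mu^{-1}\Delta_{\theta_{0}})^{m}\to0$ as $m\to\infty$, so there is an integer $k_{\theta_{0}}\ge1$ with $\|\Delta_{\theta_{0}}^{k_{\theta_{0}}}\|<\mu^{k_{\theta_{0}}}$. Because $\theta\mapsto\|\Delta_{\theta}^{k_{\theta_{0}}}\|$ is continuous and the inequality is strict, it persists on an open neighbourhood $U_{\theta_{0}}\ni\theta_{0}$, i.e.\ $\|\Delta_{\theta}^{k_{\theta_{0}}}\|<\mu^{k_{\theta_{0}}}$ for all $\theta\in U_{\theta_{0}}$. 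The family $\{U_{\theta_{0}}\}_{\theta_{0}\in[0,2\pi]}$ covers $[0,2\pi]$; by compactness extract a finite subcover $U_{\theta_{1}},\dots,U_{\theta_{m}}$ and put $\ell_{i}=k_{\theta_{i}}$, $L=\max_{i}\ell_{i}$.

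Finally I would bound an arbitrary power. Let $C_{0}=\max\{\|\Delta_{\theta}^{r}\|:\theta\in[0,2\pi],\ 0\le r\le L\}$, finite by continuity and compactness. Given any $\theta$ and any $k\ge1$, choose $i$ with $\theta\in U_{\theta_{i}}$ and write $k=q\ell_{i}+r$ with $0\le r<\ell_{i}\le L$ (Euclidean division). Submultiplicativity gives
\[
\|\Delta_{\theta}^{k}\|\le\|\Delta_{\theta}^{\ell_{i}}\|^{q}\,\|\Delta_{\theta}^{r}\|\le\mu^{q\ell_{i}}C_{0}=C_{0}\mu^{-r}\mu^{k}\le C\mu^{k},\qquad C:=C_{0}\mu^{-L},
\]
using $\mu^{-r}\le\mu^{-L}$ since $0<\mu<1$; the constant $C$ is independent of $\theta$ and $k$. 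Since $\mu<1$, $C\mu^{k}\to0$, so there is $k_{\ast}\in\mathbf{N}^{+}$ with $C\mu^{k}<1$ for all $k\ge k_{\ast}$, whence $\sup_{\theta\in[0,2\pi]}\|\Delta_{\theta}^{k}\|\le C\mu^{k}<1$, which is \eqref{eqnorm}. The one genuinely non-routine point is this last passage from pointwise to uniform: Gelfand's formula $\|\Delta_{\theta}^{k}\|^{1/k}\to\rho(\Delta_{\theta})$ is not uniform in $\theta$, so the maximum cannot simply be passed through the limit, and it is the finite subcover together with the Euclidean-division trick (absorbing the "remainder" powers $\Delta_{\theta}^{r}$ into the single constant $C_0$) that makes the estimate uniform.
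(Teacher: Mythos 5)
Your proof is correct. Note that the paper itself does not prove Lemma \ref{lm9}: it is simply quoted from \cite{bliman02mssp}, so there is no in-paper argument to compare against. Your self-contained route is the standard one and all the steps check out: continuity of $\theta\mapsto\rho(\Delta_\theta)$ gives $\rho_\ast<\mu<1$; for each $\theta_0$ the pointwise bound $\Vert\Delta_{\theta_0}^{k_{\theta_0}}\Vert<\mu^{k_{\theta_0}}$ follows from $\rho(\mu^{-1}\Delta_{\theta_0})<1$, and since the inequality is strict and $\theta\mapsto\Vert\Delta_\theta^{k_{\theta_0}}\Vert$ is continuous it persists on a neighbourhood; compactness yields the finite subcover, and the Euclidean-division step with a submultiplicative norm produces the uniform geometric bound $\sup_\theta\Vert\Delta_\theta^k\Vert\le C\mu^k$, which is strictly stronger than \eqref{eqnorm} and immediately gives the required $k_\ast$. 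Your parenthetical about non-submultiplicative norms is also handled correctly, since the conclusion is a decay estimate rather than a bare ``$<1$'', so an equivalence constant is harmless; in the context of this paper the norm used in \eqref{eq68} is the spectral norm, so that caveat is not even needed.
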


We finally recall a well-known result that was frequently used in robust
control literature.

\begin{lemma}
\label{lm1} \cite{fridman02jmaa} Let $X$ and $Y$ be real matrices of
appropriate dimensions. For $Q>0$ the following inequality is satisfied%
\[
XY+Y^{\mathrm{T}}X^{\mathrm{T}}\leq XQX^{\mathrm{T}}+Y^{\mathrm{T}}Q^{-1}Y.
\]

\end{lemma}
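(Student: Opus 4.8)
The plan is to derive the inequality by completing a square, the only ingredient being that $ZZ^{\mathrm{T}}\geq 0$ for every real matrix $Z$. Fixing dimensions, write $X\in\mathbf{R}^{m\times n}$, $Y\in\mathbf{R}^{n\times m}$, and $Q=Q^{\mathrm{T}}\in\mathbf{R}^{n\times n}$ with $Q>0$; then $XY$, $XQX^{\mathrm{T}}$ and $Y^{\mathrm{T}}Q^{-1}Y$ are all symmetric $m\times m$ matrices, and the assertion is an inequality in the Loewner (positive semidefinite) order.

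First I would introduce the symmetric positive definite square root $Q^{1/2}$ of $Q$, together with $Q^{-1/2}:=(Q^{1/2})^{-1}$, and set $Z:=XQ^{1/2}-Y^{\mathrm{T}}Q^{-1/2}\in\mathbf{R}^{m\times n}$. Expanding $ZZ^{\mathrm{T}}$ and simplifying the four resulting terms using $Q^{1/2}Q^{1/2}=Q$, $Q^{-1/2}Q^{-1/2}=Q^{-1}$ and $Q^{1/2}Q^{-1/2}=Q^{-1/2}Q^{1/2}=I_{n}$, the two cross terms collapse to $-XY-Y^{\mathrm{T}}X^{\mathrm{T}}$, so that
\[
0\leq ZZ^{\mathrm{T}}=XQX^{\mathrm{T}}-XY-Y^{\mathrm{T}}X^{\mathrm{T}}+Y^{\mathrm{T}}Q^{-1}Y,
\]
and rearranging gives $XY+Y^{\mathrm{T}}X^{\mathrm{T}}\leq XQX^{\mathrm{T}}+Y^{\mathrm{T}}Q^{-1}Y$, which is exactly the claim.

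A square-root-free variant, which I would mention for completeness, starts from the observation that $\left[\begin{array}{cc}XQX^{\mathrm{T}} & X\\ X^{\mathrm{T}} & Q^{-1}\end{array}\right]\geq 0$: indeed its $(2,2)$-block $Q^{-1}$ is positive definite and the associated Schur complement $XQX^{\mathrm{T}}-X(Q^{-1})^{-1}X^{\mathrm{T}}$ vanishes. Multiplying this block matrix on the left by the row $\left[\begin{array}{cc}I_{m} & -Y^{\mathrm{T}}\end{array}\right]$ and on the right by its transpose reproduces precisely the identity above. Either way the argument is completely routine; I do not expect any genuine obstacle here — the only points requiring attention are the dimensional bookkeeping and the fact that every matrix in sight is symmetric, so that the concluding relation is a legitimate matrix inequality and not merely a scalar one. (The familiar scalar-weight version $XY+Y^{\mathrm{T}}X^{\mathrm{T}}\leq \varepsilon XX^{\mathrm{T}}+\varepsilon^{-1}Y^{\mathrm{T}}Y$ is recovered by taking $Q=\varepsilon I_{n}$.)
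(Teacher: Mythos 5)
Your proof is correct: the completion-of-square argument with $Z=XQ^{1/2}-Y^{\mathrm{T}}Q^{-1/2}$ (or, equivalently, your Schur-complement variant) is the standard derivation of this inequality. The paper does not prove Lemma \ref{lm1} at all — it is simply recalled from \cite{fridman02jmaa} — so your argument supplies the missing proof correctly; the only tiny slip is the claim that $XY$ itself is symmetric (in general only the sum $XY+Y^{\mathrm{T}}X^{\mathrm{T}}$ is), which is never used and does not affect the argument.
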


\bigskip

\end{document}